\numberwithin{equation}{section}
\newtheorem{theorem}{Theorem}[section]
\newtheorem{conjecture}[theorem]{Conjecture}
\newtheorem{question}[theorem]{Question}
\newtheorem{hypothesis}[theorem]{Hypothesis}
\newtheorem{lemma}[theorem]{Lemma}
\newtheorem{proposition}[theorem]{Proposition}
\newtheorem{definition}[theorem]{Definition}
\newtheorem{corollary}[theorem]{Corollary}
\theoremstyle{definition}
\newtheorem{remark}[theorem]{Remark}
\newcommand{\Gal}{\mathrm{Gal}}
\newcommand{\Hol}{\mathrm{Hol}}
\newcommand{\rad}{\mathrm{rad}}
\newcommand{\PSL}{\mathrm{PSL}}
\newcommand{\PSU}{\mathrm{PSU}}
\newcommand{\PSp}{\mathrm{PSp}}
\newcommand{\SL}{\mathrm{SL}}
\newcommand{\GL}{\mathrm{GL}}
\newcommand{\Aut}{\mathrm{Aut}}
\newcommand{\Aff}{\mathrm{Aff}}
\newcommand{\Out}{\mathrm{Out}}
\newcommand{\id}{\mathrm{id}}
\newcommand{\F}{\mathbb{F}}
\newcommand{\core}{\mathrm{core}}
\newcommand{\soc}{\mathrm{soc}}
\newcommand{\onto}{\twoheadrightarrow}
\newcommand{\bs}{\backslash}
\newcommand{\hA}{\widehat{A}}
\newcommand{\hB}{\widehat{B}}
\newcommand{\hC}{\widehat{C}}
\newcommand{\hD}{\widehat{D}}
\newcommand{\hM}{\widehat{M}}
\newcommand{\hP}{\widehat{P}}
\newcommand{\Tr}{\mathrm{Trans}}
\newcommand{\NN}{\mathcal{N}}
\newcommand{\MM}{\mathcal{M}}
\newcommand{\JJ}{\mathcal{J}}
\begin{document}


\title[On Insoluble Transitive Subgroups]{On Insoluble Transitive Subgroups in the Holomorph of a Finite Soluble Group}

\author{Nigel P.~Byott}
\address{Department of Mathematics \& Statistics, University of Exeter, Exeter 
EX4 4QF, U.K.}  
\email{N.P.Byott@exeter.ac.uk}

\date{\today}
\subjclass[2020]{20D10; 12F10; 16T25;}
\keywords{Regular subgroup; Hopf-Galois theory; skew braces; finite simple groups}

\thanks{This work was supported by the Engineering and Physical Sciences Research Council [grant number EP/V005995/1]. \newline 
\indent The author thanks Senne Trappeniers for pointing out a significant error in an earlier version of this paper. The author also thanks the referee for their detailed comments and suggestions. These have considerably improved the paper.
\newline
\indent
For the purpose of open access, the author has applied a CC BY public copyright licence  to any Author Accepted Manuscript version arising. \newline 
\indent
Data Access Statement: Data sharing is not applicable to this article as no datasets were generated or analysed in this research.}

\bibliographystyle{amsalpha}

\begin{abstract}
A question of interest both in Hopf-Galois theory and in the theory of skew braces is whether the holomorph $\Hol(N)$ of a finite soluble group $N$ can contain an insoluble regular subgroup. We investigate the more general problem of finding an insoluble transitive subgroup $G$ in $\Hol(N)$ with soluble point stabilisers. We call such a pair $(G,N)$ irreducible if we cannot pass to proper non-trivial quotients $\overline{G}$, $\overline{N}$ of $G$, $N$ so that 
$\overline{G}$ becomes a subgroup of $\Hol(\overline{N})$. We classify all irreducible solutions $(G,N)$ of this problem, showing in particular that every non-abelian composition factor of $G$ is isomorphic to the simple group of order $168$. 
Moreover, every maximal normal subgroup of $N$ has index $2$. 
\end{abstract} 

\maketitle

\section{Introduction}

This paper is motivated by the following group-theoretic conjecture, which has arisen both in Hopf-Galois theory \cite[end of \S1]{HGSsol} and in the theory of skew braces \cite[Problem 2.46]{V-probs}.

\begin{conjecture} \label{main-conj}
If $N$ is a finite soluble group then any regular subgroup $G$ in the holomorph $\Hol(N)$ of $N$ is also soluble.
\end{conjecture}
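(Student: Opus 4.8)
\textbf{A strategy toward Conjecture~\ref{main-conj}.} The plan is not to settle the conjecture outright but to reduce it, by induction, to a single highly constrained configuration and then to isolate the obstruction. The first step is to replace the conjecture by the more elastic assertion that $\Hol(N)$ contains no insoluble \emph{transitive} subgroup with \emph{soluble} point stabilisers, and to induct on $|N|$. Writing $g=(\nu_g,\alpha_g)\in\Hol(N)=N\rtimes\Aut(N)$ so that $g\cdot x=\nu_g\,\alpha_g(x)$, one has $g\cdot 1_N=\nu_g$ and $\nu_{gh}=\nu_g\,\alpha_g(\nu_h)$; the group $G$ is regular exactly when $g\mapsto\nu_g$ is a bijection, and in general transitive with point stabiliser $G_{1_N}$. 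For a proper nontrivial characteristic subgroup $M$ of $N$ the crucial, easily checked, facts are: $G_M:=\{\,g\in G:\nu_g\in M\,\}$ is a subgroup whose action on $M$ lies in $\Hol(M)$ and is transitive with the \emph{same} point stabiliser $G_{1_N}$; and reduction modulo $M$ defines a homomorphism $\phi:G\to\Hol(N/M)$, $g\mapsto(\overline{\nu_g},\overline{\alpha_g})$, whose image is transitive on $N/M$ with soluble point stabiliser $\phi(G_M)$ and with $\ker\phi\le G_M$.

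From these, if $(G,N)$ is a minimal counterexample to the general assertion and $N$ has any proper nontrivial characteristic subgroup $M$, then minimality applied to $(G_M,M)$ and to $(\phi(G),N/M)$ (both involving smaller soluble groups) forces $G_M$ and $\phi(G)$ soluble; hence $\ker\phi$ is soluble and $G/\ker\phi\cong\phi(G)$ is soluble, so $G$ is soluble --- a contradiction. Thus $N$ is characteristically simple, hence, being soluble, elementary abelian, say $N=\F_p^{\,k}$, and $\Hol(N)=N\rtimes\GL_k(\F_p)$. Now the translation subgroup $T$ of $\Hol(N)$ is normal and abelian, $G\cap T\trianglelefteq G$ is soluble, and $G/(G\cap T)\hookrightarrow\GL_k(\F_p)$, so every non-abelian composition factor $S$ of $G$ is a composition factor of $\GL_k(\F_p)$; combining this with the constraints that $S$ must then possess a soluble subgroup of $p$-power index (coming from the soluble stabiliser) and a faithful $\F_p$-module of dimension at most $k$, a case analysis over the finite simple groups (using their modular representations and their largest soluble subgroups) should leave only $S\cong\PSL_2(7)\cong\GL_3(\F_2)$, realised through the non-split complement to $T$ in $\mathrm{AGL}_3(\F_2)$ with point stabiliser a Frobenius group of order $21$. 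Propagating this back up the induction gives exactly the classification advertised in the abstract: in any irreducible solution every non-abelian composition factor of $G$ is $\cong\PSL_2(7)$, and in a minimal regular solution $G$ has a soluble normal subgroup $K$ with $G/K\cong\PSL_2(7)$ while every maximal normal subgroup of $N$ has index $2$.

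The step I expect to be the genuine obstacle is the very last one --- deducing the conjecture itself, i.e.\ showing that the surviving configuration admits no \emph{regular} realisation. The reduction leaves open the possibility of an insoluble regular $G\le\Hol(N)$ that is soluble-by-$\PSL_2(7)$ acting on a soluble $N$ of order $168\,|K|$ whose top quotients are all of order $2$, and nothing in the argument excludes it, because the obstruction is global: one must control how one and the same $\PSL_2(7)$ acts compatibly across the \emph{entire} chief series of $N$ simultaneously, not one factor at a time, and no local reduction sees this. It is precisely here that the present paper stops, establishing the structural constraints in place of the desired non-existence.
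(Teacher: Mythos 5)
The statement you were asked to prove is Conjecture \ref{main-conj}, which the paper itself does not prove --- it remains open --- and your proposal, to its credit, does not claim to prove it either, so there is no proof in the paper against which to compare one. The programme you sketch (relax regularity to transitivity with soluble point stabilisers, reduce a minimal counterexample to characteristically simple, hence elementary abelian, $N$, then use the CFSG to cut the possible non-abelian composition factors down to $\PSL_2(7)\cong\GL_3(2)$ with its index-$8$ Frobenius subgroup of order $21$, and note that the surviving configuration is exactly where the non-existence argument would have to bite) is essentially the paper's own strategy, and your diagnosis of where the argument stops is exactly where the paper stops. Two cautions if you were to execute the sketch: once the case analysis reveals that insoluble transitive subgroups with soluble stabilisers genuinely exist (already for $N=\F_2^{3}$, by Theorem \ref{irred-constr}), the clean induction ``smaller $N$ forces solubility'' collapses, which is why the paper instead classifies \emph{irreducible} solutions and then relates them to (weakly) minimal counterexamples through the machinery of $G$-admissible and $G$-invariant subgroups rather than ``propagating back up''; and the elimination of the other candidate simple groups is not achieved by the heuristic ``faithful $\F_p$-module of dimension at most $k$'' for a composition factor, but by Clifford theory applied to the socle, yielding the inequality (\ref{key-ineq}) that plays $d(T_J)$ off against $v_p(|\Aut(T_J)|)$.
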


Tsang and Qin \cite{TQ} showed computationally that there are no counterexamples with $|N| \leq 2000$, and showed theoretically that there are no counterexamples with $|N|$ cube-free or of the form $2^r |S|$ for certain non-abelian simple groups $S$.  Gorshkov and Nasybullov \cite{GN} proved that, in a counterexample of minimal order, the insoluble subgroup $G$ cannot be a simple group. (This can also be deduced from \cite{simple}.) 
Nasybullov \cite{Nasybullov} has also shown that the analogue of Conjecture \ref{main-conj} for infinite groups is false. Reversing the roles of $G$ and $N$ in Conjecture \ref{main-conj}, the holomorph of a finite insoluble group may contain a soluble regular subgroup \cite{HGSsol}.
 
In this paper, we provide further evidence in support of Conjecture \ref{main-conj}. We consider minimal counterexamples to Conjecture \ref{main-conj} in a sense made precise in Definition \ref{minimal}. 
We show in Theorem \ref{main-thm} that if the pair of groups $(G,N)$ is a minimal counterexample then $G$ has a very special form: it contains a soluble normal subgroup $K$ such that, for some $r \geq 1$ and some soluble transitive subgroup $H$ of the symmetric group $S_r$, the quotient group $G/K$ is isomorphic to the wreath product $\GL_3(2) \wr H$ of the non-abelian simple group $\GL_3(2)=\PSL_3(2) \cong \PSL_2(7)$ of order $168$ by $H$. Thus $\GL_3(2)$ is the unique non-abelian composition factor of $G$, and it occurs with multiplicity $r$. 

Since transitive subgroups of $\Hol(N)$ exhibit better behaviour under quotients than regular subgroups (see Proposition \ref{quot} below), we approach Conjecture  \ref{main-conj} by relaxing the requirement for regularity. We therefore ask:

\begin{question}  \label{main-qn}
	What are the pairs $(G,N)$ of finite groups such that $N$ is soluble, $G$ is an insoluble transitive subgroup of $\Hol(N)$, and the stabiliser in $G$ of any element of $N$ is also soluble?
\end{question}

It is perhaps surprising that such pairs $(G,N)$ do exist. In the smallest example, $N$ is the elementary abelian group of order $8$ and $G \cong \GL_3(2)$. We will define a notion of irreducibility for a solution $(G,N)$ of Question \ref{main-qn}, and then classify all irreducible solutions. More precisely, in Theorem \ref{irred-constr} we describe a family of irreducible solutions, and in Theorem 
\ref{irred-class} we show that there are no others. Theorem \ref{main-thm} is then a consequence of Theorem \ref{irred-class}. 

For the reader's convenience, we now briefly review the topics of Hopf-Galois theory and skew braces which form the background to this work. 

We first discuss Hopf-Galois theory.  A more detailed exposition can be found in \cite[Chapter 2]{Ch00}. If $L/K$ is a finite Galois extension of fields with Galois group $G$ then there may be many $K$-Hopf algebras $H$ which act on $L$ to make $L$ into an $H$-Galois extension. Greither and Pareigis \cite{GP} showed that these Hopf-Galois structures correspond to subgroups $N$ in the symmetric group on $G$ which act regularly on $G$ and are normalised by left translations. The Hopf-Galois structures can therefore be classified according to the isomorphism type of $N$, which is called the type of the Hopf-Galois structure. The extension $L/K$ admits a Hopf-Galois structure of type $N$ if and only if $\Hol(N)$ contains a regular subgroup isomorphic to $G$. Thus Conjecture \ref{main-conj} is equivalent to the assertion that a finite Galois extension with insoluble Galois group cannot admit a Hopf-Galois structure of soluble type. More generally, a solution $(G,N)$ to Question \ref{main-qn} corresponds to a
Hopf-Galois structure of soluble type $N$ on a separable (but not necessarily normal) field extension $L/K$  with normal closure $E/K$ such that $G=\Gal(E/K)$ is insoluble but $\Gal(E/L)$ is soluble. 

We now turn to skew braces. Braces were introduced by Rump \cite{Rump} in order to 
study non-degenerate involutive set-theoretical solutions of the Yang-Baxter equation.  Guarnieri and Vendramin \cite{GV} defined the more general concept of skew braces to study non-degenerate solutions of the Yang-Baxter equation which are not necessarily involutive. A skew brace  
$(B,+, \circ)$ consists of a set $B$ and two operations, each making $B$ into a group, and satisfying a certain compatibility condition by virtue of which the multiplicative group $(B,\circ)$ has an action on the additive group $(B,+)$. A skew brace is a brace if $(B,+)$ is abelian. As explained in the introduction 
to \cite{bachiller}, if $(B,+,\circ)$ is a finite brace then $(B,\circ)$ is soluble. In any skew brace, $(B, \circ)$ can be viewed as a regular subgroup of the holomorph of $(B,+)$, and conversely a regular subgroup $G$ in the holomorph of a group $(B,+)$ gives rise to a skew brace $(B,+,\circ)$ with $(B,\circ) \cong G$. Thus Conjecture \ref{main-conj} is equivalent to the assertion that any finite skew brace with soluble additive group must also have soluble multiplicative group. 

We end this introduction with an outline of the contents of the paper. In \S\ref{admis}, we describe how our problem behaves under passing to subgroups and quotient groups of $N$. In \S\ref{statements} we formulate our main results Theorems \ref{main-thm}, \ref{irred-constr} and \ref{irred-class}, and we deduce Theorem \ref{main-thm} from Theorem \ref{irred-class}. In \S\ref{example-sec}, we construct some irreducible solutions to Question \ref{main-qn}, thereby proving Theorem \ref{irred-constr}. The remainder of the paper is devoted to the proof of Theorem \ref{irred-class}. In any irreducible solution of Question \ref{main-qn}, $N$ is an elementary abelian group. Thus $N$ may be identified with a vector space $V=\F_p^n$ for some prime $p$ and some $n \geq 1$, and $\Hol(N)$ then becomes the affine group $\Aff(V)$ of $V$. In \S\ref{affine} we give some general results on affine groups of finite vector spaces. We then deduce some information about the structure of the group $G$ in \S\ref{subgps-affine}, in particular showing that each composition factor of $G$ contains a soluble subgroup of $p$-power index. In \S\ref{clifford} we decompose $V$ as a module over the socle $S$ of $G$. This leads to an inequality (\ref{key-ineq}) relating the orders of the non-abelian composition factors of $S$ and their automorphism groups to various numerical parameters describing the structure of $G$ and $N$. Our conclusions up to this point are collected together in Theorem \ref{no-CFSG}. To proceed further, we apply the Classification of Finite Simple Groups. This is done in \S\ref{cfsg} via Guralnick's determination \cite{Gural} of the non-abelian simple groups with a subgroup of prime-power index. We obtain in Corollary \ref{simple-sol-p} a list of the possible non-abelian composition factors of $S$ and the corresponding primes $p$. Combining this with (\ref{key-ineq}), we prove Theorem \ref{just-168}, which eliminates all potential non-abelian composition factors except the simple group of order $168$. Theorem \ref{just-168} also shows that $p$ must be $2$. Finally, we show in \S\ref{conclusion} that $G/S$ is soluble and the pair $(G,N)$ can only be one of those described in Theorem \ref{irred-constr}. 
 
\section{$G$-admissible and $G$-invariant subgroups}  \label{admis}

The holomorph of a group $N$ is by definition the semidirect product $\Hol(N)=N \rtimes \Aut(N)$. 
We write its elements as $(\alpha, \theta)$ with $\alpha \in N$ and $\theta \in \Aut(N)$, 
so the group operation on $\Hol(N)$ is given by 
\begin{equation} \label{hol-mult}
	(\alpha, \theta)(\beta, \phi) = (\alpha \theta(\beta), \theta \phi),
\end{equation}
and $\Hol(N)$ acts as permutations of $N$ by
\begin{equation} \label{hol-action}
	(\alpha, \theta) \cdot \eta = \alpha \theta(\eta). 
\end{equation}

We say that a subgroup $G$ of $\Hol(N)$ is transitive (respectively, regular) if, for all $\eta$, $\mu \in N$, there is a $g \in G$ (respectively, a unique $g \in G$) with $g \cdot \eta = \mu$. 

Now let $N$ be an arbitrary finite group, and let $G$ be a transitive subgroup of $\Hol(N)$.  We  describe conditions under which $G$ gives rise to a transitive subgroup of the holomorph of a subgroup or quotient group of $N$. We frequently write $g \in G$ as $(\alpha_g, \theta_g)$ with $\alpha_g \in N$ and $\theta_g \in \Aut(N)$. Then $\alpha_g= g \cdot e$, where $e$ is the identity element of $N$.

\begin{definition} \label{ad-invt}
Let $M$ be a subgroup of $N$. 
\begin{itemize}
\item[(i)] $M$ is {\em $G$-admissible} if the subset
$$ M_*  = \{ g \in G : \alpha_g \in M\} $$
of $G$ is a subgroup of $G$.
\item[(ii)] $M$ is {\em $G$-invariant} if $\theta_g(m) \in M$ for all $g \in G$ and $m \in M$.
\end{itemize}
\end{definition}

\begin{proposition}  \label{M-star}
Let $G$ be a transitive subgroup of $\Hol(N)$. For a subgroup $M$ of $N$, the following are equivalent:
\begin{itemize}
\item[(i)] $M$ is $G$-admissible;
\item[(ii)] $g \cdot m \in M$ for all $g \in M_*$ and all $m \in M$;
\item[(iii)] $\theta_g(m) \in M$ for all $g \in M_*$ and all $m \in M$.
\end{itemize}
\end{proposition}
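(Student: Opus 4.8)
The plan is to prove the cycle of implications (i)~$\Rightarrow$~(ii)~$\Rightarrow$~(iii)~$\Rightarrow$~(i), keeping track of exactly where transitivity of $G$ and finiteness of $N$ are used. The only facts needed are the multiplication rule (\ref{hol-mult}), which gives $\alpha_{gh} = \alpha_g\,\theta_g(\alpha_h)$ for $g,h\in G$, the action formula (\ref{hol-action}), which gives $g\cdot m = \alpha_g\,\theta_g(m)$, and the observation that the identity element of $\Hol(N)$ is $(e,\id)$, so that $e\in M$ forces the identity of $G$ into $M_*$; in particular $M_*\neq\emptyset$.

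First I would dispose of the equivalence (ii)~$\Leftrightarrow$~(iii), which holds for a trivial reason: if $g\in M_*$ then $\alpha_g\in M$, so for any $m\in M$ the two elements $g\cdot m = \alpha_g\,\theta_g(m)$ and $\theta_g(m) = \alpha_g^{-1}(g\cdot m)$ lie in $M$ simultaneously. Next, for (iii)~$\Rightarrow$~(i): given $g,h\in M_*$, apply (iii) with $m=\alpha_h\in M$ to get $\theta_g(\alpha_h)\in M$, whence $\alpha_{gh}=\alpha_g\,\theta_g(\alpha_h)\in M$ since $\alpha_g\in M$ as well; thus $gh\in M_*$, so $M_*$ is closed under multiplication. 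Since $M_*$ is a non-empty subset of the finite group $G$ closed under multiplication, it is a subgroup — this is the only place where finiteness of $N$ (hence of $G$) is used, and it spares us a separate verification of closure under inversion.

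Finally, for (i)~$\Rightarrow$~(ii): suppose $M_*$ is a subgroup, and fix $g\in M_*$ and $m\in M$. Since $G$ acts transitively on $N$, there is $h\in G$ with $h\cdot e = m$, i.e.\ $\alpha_h = m$; as $m\in M$ this says $h\in M_*$, and since $M_*$ is a subgroup, $gh\in M_*$, so $\alpha_{gh} = \alpha_g\,\theta_g(\alpha_h) = \alpha_g\,\theta_g(m) = g\cdot m$ lies in $M$. That closes the cycle.

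The argument is entirely elementary; there is no real obstacle, but the one point that merits a remark is that the implication (i)~$\Rightarrow$~(ii) genuinely relies on transitivity — through the fact that every element of $M$ is realised as $\alpha_h$ for some $h\in M_*$ — whereas the other implications do not. It is also worth noting in passing that (iii) shows a $G$-admissible subgroup $M$ is invariant under $\theta_g$ for every $g\in M_*$, a weaker condition than the $G$-invariance of Definition~\ref{ad-invt}(ii).
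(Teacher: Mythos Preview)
Your proof is correct and follows essentially the same approach as the paper's: both establish the trivial equivalence (ii)~$\Leftrightarrow$~(iii) via $g\cdot m=\alpha_g\theta_g(m)$, deduce closure of $M_*$ under multiplication from (iii) (invoking finiteness to get a subgroup), and use transitivity to pick $h$ with $\alpha_h=m$ for (i)~$\Rightarrow$~(ii). Your additional remarks on where transitivity and finiteness enter are accurate and match the paper's reasoning.
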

\begin{proof}
If $g \in M_*$ and $m \in M$ then $g \cdot m =\alpha_g \theta_g(m)$ with $\alpha_g \in M$, so (ii) and (iii) are equivalent. 
Assume that these conditions hold. Then, for $g$, $h \in M_*$ we have $(gh) \cdot e = g \cdot (h \cdot e)=g \cdot \alpha_h \in M$ since $\alpha_h \in M$.
Thus $M_*$ is closed under multiplication. Since $M_*$ is finite and contains the identity element $(e, \id)$ of $\Hol(N)$, it follows that $M_*$ is a subgroup of $G$.
Hence (i) holds. 

Conversely, assume (i) and let $g \in M_*$ and $m \in M$. As $G$ is transitive, there is some $h \in G$ with $h \cdot e =m$. Then $h \in M_*$ and $\alpha_h=m$. Since $M$ is $G$-admissible, we have $gh \in M_*$ and so $g \cdot m = g \cdot (h \cdot e)= (gh) \cdot e \in M$ since $gh \in M_*$. Thus (ii) holds.
\end{proof}

\begin{remark} \label{inv-ad-rk}
It follows from Proposition \ref{M-star} that if $M$ is $G$-invariant then $M$ is $G$-admissible. In the other direction, if $M$ is $G$-admissible and $H=M_*$ then $M$ is $H$-invariant (but not necessarily $G$-invariant). 
\end{remark}

\begin{lemma} \label{admis-sub}
Let $G$ be a transitive subgroup of $\Hol(N)$, let $M$ be a $G$-admissible subgroup of $N$, and let $H=M_*$. Then the action of $G$ on $N$ restricts to an action of $H$ on $M$. In particular, if $G$ is a regular subgroup of $\Hol(N)$ then $H$ is a regular subgroup of $\Hol(M)$. Conversely, suppose that $G$ is a regular subgroup of $\Hol(N)$ and $H$ is a subgroup of $G$ acting regularly on some subgroup $M$ of $N$. Then $M$ is $G$-admissible and $H=M_*$.
\end{lemma}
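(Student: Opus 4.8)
The plan is to reduce everything to Proposition~\ref{M-star} together with the bijection $\beta\colon G\to N$, $g\mapsto\alpha_g=g\cdot e$, which is available whenever $G$ is regular. For the first assertion I would argue as follows. Since $H=M_*$, Proposition~\ref{M-star}(ii) says precisely that $g\cdot m\in M$ for all $g\in H$ and $m\in M$; as each $g\in H$ permutes $N$ and carries the finite set $M$ into itself, it restricts to a permutation of $M$, and these restrictions define an action of $H$ on $M$. To see this action is realised inside $\Hol(M)$, observe that for $g=(\alpha_g,\theta_g)\in M_*$ we have $\alpha_g\in M$ by definition and $\theta_g(M)\subseteq M$ by Proposition~\ref{M-star}(iii); since $\theta_g$ is injective and $M$ is finite, $\theta_g|_M\in\Aut(M)$, so $(\alpha_g,\theta_g|_M)\in\Hol(M)$. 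A short check with the multiplication rule~(\ref{hol-mult}) shows that $g\mapsto(\alpha_g,\theta_g|_M)$ is a homomorphism $H\to\Hol(M)$ intertwining the two actions on $M$, since all the relevant products already lie in, or preserve, $M$.

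Next I would treat the regular case. If $G$ is regular on $N$ then its action on $N$ is free, hence so is the action of $H$ on $M$; in particular the homomorphism $H\to\Hol(M)$ above is injective, and every $H$-orbit in $M$ has size $|H|$. Since $\beta$ is a bijection and $M_*=\beta^{-1}(M)$, we have $|H|=|M_*|=|M|$, so $M$ is a single $H$-orbit and $H$ acts regularly on $M$. For the converse, suppose $G$ is regular on $N$ and $H\le G$ acts regularly on the subgroup $M$. Because $e\in M$ and $H$ preserves $M$ under the action, each $h\in H$ has $\alpha_h=h\cdot e\in M$, so $H\subseteq M_*$; and $|M_*|=|\beta^{-1}(M)|=|M|=|H|$ by regularity of the $H$-action on $M$. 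Hence $H=M_*$, so $M_*$ is a subgroup of $G$, i.e.\ $M$ is $G$-admissible, and $H=M_*$, as required.

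I expect no serious obstacle: the proof is essentially bookkeeping with Proposition~\ref{M-star} and the counting bijection $\beta$. The only step needing a little care is checking that $\theta_g|_M$ really is an automorphism of $M$ and that $g\mapsto(\alpha_g,\theta_g|_M)$ is a well-defined homomorphism into $\Hol(M)$ (and injective when $G$ is regular) --- but all of this follows at once from the finiteness of $M$ and parts~(i) and~(iii) of Proposition~\ref{M-star}.
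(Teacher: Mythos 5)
Your proof is correct and follows essentially the same route as the paper: both reduce everything to Proposition~\ref{M-star} to see that elements of $M_*$ restrict to elements of $\Hol(M)$, and then handle regularity and the converse. The only (harmless) variation is that where the paper deduces transitivity of $H$ on $M$ directly from transitivity of $G$ on $N$, you instead count via the bijection $g\mapsto g\cdot e$; both arguments work, and your explicit check that $H\to\Hol(M)$ is injective in the regular case is a welcome bit of extra care.
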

\begin{proof} 
For the first statement, if $h=(\alpha_h,\theta_h) \in H$ then $\alpha_h \in M$, and $\theta_h$ restricts to an automorphism of $M$ by Proposition \ref{M-star}. Thus for $m \in M$ we have $h \cdot m= \alpha_h \theta_h(m) \in M$. We may therefore view $h$ as an element of $\Hol(M)$. Since $G$ is transitive on $N$, it is clear that $H$ is transitive on $M$. If $G$ is regular on $N$ then the stabiliser of $e$ in $H$ is trivial, so $H$ is regular on $M$. 

For the converse, if $G$ is regular on $N$ and $H$ acts regularly on some subgroup $M$ then we must have $H=\{ g \in G: g \cdot e \in M \} = M_*$. Hence $M_*$ is a group and $M$ is $G$-admissible.
\end{proof}

\begin{remark}
In the language of braces, Lemma \ref{admis-sub} says that if $G$ is regular on $N$ (so that $N$ is a skew brace) then $M$ is a sub-skew brace of $N$ if and only if $M$ is a $G$-admissible subgroup of $N$.
\end{remark}

We next consider quotients of $N$.

\begin{proposition}  \label{quot}
Let $G$ be a transitive subgroup of $\Hol(N)$, and let $M$ be a normal subgroup of $N$. Then the action of $G$ on $N$ induces an action of $G$ on 
$N/M$ if and only if $M$ is a $G$-invariant subgroup of $N$. In this case, $M$ is $G$-admissible. Moreover, there is a normal subgroup
$K$ of $G$ such that $G/K$ embeds as a transitive subgroup of $\Hol(N/M)$.
\end{proposition}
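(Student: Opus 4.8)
The plan is to establish the biconditional first, and then deduce the two ``moreover'' assertions. Throughout I read the phrase ``the action of $G$ on $N$ induces an action of $G$ on $N/M$'' as asserting that the rule $g \cdot (\eta M) = (g \cdot \eta) M$ is well defined, that is, depends only on the coset $\eta M$ and not on the chosen representative $\eta$.

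For the forward direction I would assume $M$ is $G$-invariant. A typical element of the coset $\eta M$ is $\eta m$ with $m \in M$, and by \eqref{hol-action}, using that $\theta_g$ is an automorphism of $N$, we have $g \cdot (\eta m) = \alpha_g \theta_g(\eta) \theta_g(m)$. Since $\theta_g(m) \in M$ by $G$-invariance, this element lies in $\alpha_g \theta_g(\eta) M = (g \cdot \eta) M$; hence $(g \cdot (\eta m)) M = (g \cdot \eta) M$, so the rule $g \cdot (\eta M) = (g \cdot \eta) M$ is well defined and $G$ acts on $N/M$. For the converse I would assume $G$ acts on $N/M$ and apply well-definedness to the two representatives $e$ and $m$ of the coset $M = eM$, for an arbitrary $m \in M$: this gives $\alpha_g M = (g \cdot e) M = (g \cdot m) M = \alpha_g \theta_g(m) M$, and cancelling $\alpha_g$ yields $\theta_g(m) \in M$. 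As $g \in G$ and $m \in M$ were arbitrary, $M$ is $G$-invariant. That $M$ is then also $G$-admissible is precisely the first assertion of Remark~\ref{inv-ad-rk}.

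For the final statement I would assume these equivalent conditions hold. First I would check that each $\theta_g$ maps $M$ \emph{onto} itself: from \eqref{hol-mult} the inverse of $(\alpha_g, \theta_g)$ in $\Hol(N)$ is $(\theta_g^{-1}(\alpha_g^{-1}), \theta_g^{-1})$, so $\theta_{g^{-1}} = \theta_g^{-1}$, and applying $G$-invariance to $g$ and to $g^{-1}$ gives $\theta_g(M) \subseteq M$ and $\theta_g^{-1}(M) \subseteq M$, whence $\theta_g(M) = M$. Thus $\theta_g$ descends to an automorphism $\overline{\theta_g}$ of $N/M$, and I can define $\Phi \colon G \to \Hol(N/M)$ by $\Phi(g) = (\alpha_g M, \overline{\theta_g})$. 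Using $\alpha_{gh} = \alpha_g \theta_g(\alpha_h)$ and $\theta_{gh} = \theta_g \theta_h$, a short computation with \eqref{hol-mult} shows $\Phi(gh) = \Phi(g)\Phi(h)$, so $\Phi$ is a homomorphism, and $\Phi(g) \cdot (\eta M) = (g \cdot \eta) M$ by \eqref{hol-action}, so $\Phi(G)$ realises the induced action of $G$ on $N/M$. This action is transitive since $G$ is transitive on $N$ and the projection $N \to N/M$ is surjective. Setting $K = \ker \Phi$, which is a normal subgroup of $G$, we obtain an isomorphism $G/K \cong \Phi(G)$ onto a transitive subgroup of $\Hol(N/M)$, as required.

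I do not expect any real obstacle: the whole argument is a chain of direct verifications. The one point that calls for a moment's care is showing $\theta_g(M) = M$ rather than merely $\theta_g(M) \subseteq M$, so that $\overline{\theta_g}$ genuinely lies in $\Aut(N/M)$; this is where one must use that $G$ is closed under inversion. Everything after that — the homomorphism property of $\Phi$ and the choice $K = \ker\Phi$ — is immediate.
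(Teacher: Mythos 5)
Your proof is correct and follows essentially the same route as the paper: the well-definedness criterion for the induced action is unwound to exactly the $G$-invariance condition, admissibility comes from Remark~\ref{inv-ad-rk}, and your $K=\ker\Phi$ coincides with the paper's choice $K=\core_G(M_*)$, since $\Hol(N/M)$ acts faithfully on $N/M$ and the action is transitive. Your explicit verification that $\theta_g(M)=M$ (not merely $\subseteq M$) is a detail the paper leaves implicit, and is a worthwhile addition.
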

\begin{proof}
The action of $G$ on $N$ induces a well-defined action on $N/M$ if and only if $g \cdot (nm) \in  (g \cdot n) M$ for all $g \in G$, $n \in N$ and $m \in M$. This is equivalent to $\alpha_g \theta_g(n) \theta_g(m) \in \alpha_g \theta_g(n)M$ for all $g$, $n$, $m$, and thus to the $G$-invariance of $M$. If $M$ is a $G$-invariant normal subgroup, then $M$ is $G$-admissible by Remark \ref{inv-ad-rk}, and 
the stabiliser of the trivial coset $eM$ is 
$\{g \in G : g \cdot e \in M\}=M^*$.
Now let $K$ be the core of $M^*$ in $G$:
$$ K=\core_G(M^*) = \bigcap_{g \in G} g M^* g^{-1}.  $$
Then $K \lhd G$, and $G/K$ embeds as a subgroup of $\Hol(N/M)$. Moreover $G/K$ is transitive on $N/M$ since $G$ is transitive on $N$. 
\end{proof}

\section{Statement of the main results} \label{statements}

In this section, we use the ideas developed above to formulate our main results on Conjecture \ref{main-conj} and Question \ref{main-qn}. 

\begin{definition}  \label{minimal}
We call a pair of groups $(G,N)$ a {\em counterexample to Conjecture \ref{main-conj}} if $N$ is a finite soluble group and $G$ is an insoluble regular subgroup of $\Hol(N)$. The pair $(G,N)$ is a {\em minimal 
counterexample to Conjecture \ref{main-conj}} if, in addition, for every proper $G$-admissible subgroup 
$M$ of $N$, the subgroup $M_*$ of $G$ is soluble.
\end{definition}

Our main result on Conjecture \ref{main-conj} is the following.

\begin{theorem} \label{main-thm} \ 
Let $(G,N)$ be a minimal counterexample to Conjecture \ref{main-conj}. Then every maximal normal subgroup of 
$N$ has index $2$, and $G$ has a soluble normal subgroup $K$ such that, for some $r \geq 1$ and some transitive soluble subgroup $H$ of the symmetric group $S_r$, we have 
	$$ G/K \cong \GL_3(2) \wr H = \GL_3(2)^r \rtimes H, $$
	the wreath product of $\GL_3(2)$ by $H$.
\end{theorem}

Theorem \ref{main-thm} will be deduced from Theorem \ref{irred-class} below at the end of this section. 

\begin{corollary} \label{min-subquot}
 If $(G,N)$ is any counterexample to Conjecture \ref{main-conj} then the simple group $\GL_3(2)$ of order $168$ occurs as a subquotient of $G$.
\end{corollary}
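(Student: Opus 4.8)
The plan is to reduce an arbitrary counterexample to a weakly minimal one and then invoke Theorem \ref{main-thm}(ii). Let $(G,N)$ be any counterexample to Conjecture \ref{main-conj}, so $N$ is finite soluble and $G$ is an insoluble regular subgroup of $\Hol(N)$. If $(G,N)$ is already weakly minimal, then Theorem \ref{main-thm}(ii) gives a soluble normal $K \lhd G$ with $G/K \cong \GL_3(2) \wr H$, and since $r \geq 1$ the simple group $\GL_3(2)$ of order $168$ is visibly a subquotient of $G/K$, hence of $G$. So the only real content is the reduction step.

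First I would argue that among all counterexamples $(G',N')$ with $N'$ a $G'$-admissible subgroup of $N$ and $G' = N'_*$ insoluble, we may pick one with $|N'|$ minimal (the collection is non-empty since $(G,N)$ itself qualifies, taking $N' = N$, $G' = N_* = G$). I claim this minimal choice $(G',N')$ is weakly minimal. Indeed, let $M$ be a proper normal $G'$-invariant subgroup of $N'$. By Remark \ref{inv-ad-rk}, $M$ is $G'$-admissible, so by Lemma \ref{admis-sub} the subgroup $H' = M_*$ acts regularly on $M$, i.e. $H'$ is a regular subgroup of $\Hol(M)$, and $M$ is soluble (being a subgroup of the soluble group $N'$). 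If $H'$ were insoluble, then $(H', M)$ would be a counterexample with $M$ a $G'$-admissible — hence (one checks transitively) a $G$-admissible — subgroup of $N$ and $|M| < |N'|$, contradicting minimality. (Here I would note that admissibility is transitive: if $M \leq N' \leq N$ with $N'$ $G$-admissible and $M$ $N'_*$-admissible, then $M$ is $G$-admissible and $M_* $ computed in $G$ equals $M_*$ computed in $N'_* = G'$; this follows directly from the definition $M_* = \{g : \alpha_g \in M\}$.) Hence $H' = M_*$ is soluble for every proper normal $G'$-invariant $M$, so $(G',N')$ is weakly minimal.

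Now apply Theorem \ref{main-thm}(ii) to $(G',N')$: there is a soluble $K' \lhd G'$ with $G'/K' \cong \GL_3(2) \wr H$ for some $r \geq 1$, so the simple group of order $168$ is a subquotient of $G'$. Finally, $G' = N'_*$ is a subgroup of $G$, so any subquotient of $G'$ is a subquotient of $G$, completing the proof.

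The main obstacle I anticipate is purely bookkeeping rather than conceptual: verifying carefully that "$G'$-admissible subgroup of $N'$" can be promoted to "$G$-admissible subgroup of $N$" with the associated $M_*$ unchanged, so that the minimality of $|N'|$ in the ambient problem over $N$ is genuinely being contradicted. One must also confirm that $G' = N'_*$ being a subgroup of $\Hol(N')$ with the restricted action (Lemma \ref{admis-sub}) makes $(G',N')$ a bona fide counterexample in its own right, i.e. that $G'$ is still regular on $N'$ — but this is exactly the content of Lemma \ref{admis-sub}. Once these compatibility checks are in place, the corollary is immediate from Theorem \ref{main-thm}(ii).
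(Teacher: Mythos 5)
Your proof is correct and takes essentially the same approach as the paper: descend through $G$-admissible subgroups to a counterexample that is (weakly) minimal and then apply Theorem \ref{main-thm}. The only cosmetic differences are that the paper iterates the descent step-by-step and invokes part (i) of Theorem \ref{main-thm} for a minimal counterexample, whereas you take a global minimum over $|N'|$ (using the transitivity of admissibility, which you correctly verify) and invoke part (ii); both yield $\GL_3(2)$ as a subquotient of a subgroup of $G$.
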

\begin{proof}
If $(G,N)$ is not a minimal counterexample, we may replace it by a pair $(H,M)$ where $M$ is a proper $G$-admissible subgroup of $N$ and $H=M_*$ is insoluble. Repeating this construction, we eventually reach a minimal counterexample $(G_1,N_1)$ in which $G_1$ is a subgroup of $G$. By Theorem \ref{main-thm}, $\GL_3(2)$ occurs as a composition factor of $G_1$, and therefore as a subquotient of $G$. 
\end{proof}

\begin{definition}  \label{irred}
For a pair of finite groups $(G,N)$ with $G \leq \Hol(N)$, we will say that $(G,N)$ is {\em irreducible} if $N$ has no non-trivial proper $G$-invariant normal subgroup. 
\end{definition}

\begin{lemma} \label{irred-el-ab}
Let $(G,N)$ be irreducible, with $N$ soluble. If $G$ acts transitively on $N$ then $N$ is an elementary abelian $p$-group for some prime $p$.
\end{lemma}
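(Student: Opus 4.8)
The plan is to show first that $N$ is abelian, and then that $N$ is an elementary abelian $p$-group, in each case by exhibiting a characteristic subgroup of $N$ which is automatically $G$-invariant and normal, and using irreducibility to conclude it is trivial or all of $N$.

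The key observation is that if $M$ is a \emph{characteristic} subgroup of $N$, then $M$ is normal in $N$ and satisfies $\theta_g(M) = M$ for every $g = (\alpha_g, \theta_g) \in G$, since each $\theta_g$ lies in $\Aut(N)$; hence $M$ is $G$-invariant in the sense of Definition \ref{ad-invt}(ii). By irreducibility, any characteristic subgroup of $N$ must be either trivial or equal to $N$. First I would apply this to the derived subgroup $M = [N,N]$: it is characteristic, so either $[N,N] = N$ or $[N,N] = 1$. The case $[N,N] = N$ is impossible because $G$ acts transitively on $N$, so $N$ is finite and... actually here one must be slightly careful: transitivity of $G$ on $N$ does \emph{not} by itself force $N$ soluble. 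So instead I would note that if $N$ is nontrivial then $N$ has a nontrivial characteristic subgroup that is a proper subgroup unless $N$ is characteristically simple; and a characteristically simple finite group is a direct power of a simple group. If that simple group is abelian we are already close to done; if it is nonabelian, then $N = S^k$ with $S$ nonabelian simple, and I would need to rule this out — but note that transitivity alone does not do so, so the cleanest route is the first one: take $M = [N,N]$, characteristic hence $G$-invariant and normal, so $M \in \{1, N\}$; if $M = N$ then $N$ is perfect, but then consider a maximal characteristic subgroup, or better, consider the last term of the derived series, which is a nontrivial perfect characteristic subgroup — this loops. The genuinely clean argument: let $M$ be the last nontrivial term of the derived series of $N$ (if $N \neq 1$); this $M$ is perfect and characteristic. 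Irreducibility forces $M = N$ (if $M \neq 1$) or gives nothing. Hmm — so transitivity genuinely is needed. The resolution: $G \leq \Hol(N)$ and $G$ transitive means $N$ embeds into $G$ as a point-stabiliser-free... no. I think the intended argument simply uses that $G$ is transitive on $N$, so $|G| = |N| \cdot |G_e|$ and in particular $N$ is finite, combined with: the Frattini subgroup, the derived subgroup, and the Fitting subgroup are all characteristic. Apply irreducibility to $[N,N]$: since $\{1\} \neq N$ and... I would instead invoke: a nontrivial finite group has a nontrivial \emph{abelian} characteristic subgroup unless it is a nontrivial direct power of a nonabelian simple group, and then handle the latter separately using the fact that $G \leq \Hol(N) = N \rtimes \Aut(N)$ acts transitively — but actually the paper's later sections (and Corollary \ref{min-subquot}, Theorem \ref{main-thm}) suggest insoluble $N$ of this type genuinely need CFSG-type input, which cannot be what a Lemma in \S\ref{statements} intends.

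Reconsidering: the correct elementary argument. Suppose $N \neq 1$. Let $p$ be a prime dividing $|N|$ and let $M$ be the subgroup of $N$ generated by all elements of order $p$ in the centre $Z(N)$ together with... no. The clean statement: $\Omega_1(Z(N))$, the subgroup of the centre generated by elements of order dividing $p$ (for the smallest prime $p \mid |Z(N)|$) — but this requires $Z(N) \neq 1$. So the real first step must be: show $N$ is nilpotent, hence $Z(N) \neq 1$. And nilpotency: the Fitting subgroup $F(N)$ is characteristic, hence $G$-invariant and normal, hence $F(N) \in \{1, N\}$; since $N$ is a nontrivial finite group $F(N) \neq 1$ only if $N$ is soluble or at least has a nontrivial nilpotent normal subgroup — a nontrivial finite group always has $F(N) \neq 1$ when it is soluble but $F(N)$ can be trivial for nonsoluble $N$. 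So again nonsoluble $N$ is the obstacle, and again this should not need CFSG.

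I now believe the intended proof is: (1) $[N,N]$ is characteristic, hence $G$-invariant and normal, hence trivial or $N$; (2) if $[N,N] = N$, derive a contradiction from transitivity — specifically, $G/G_e \cong N$ as $G$-sets means... actually the contradiction is that we may then pass to the quotient problem but there is no quotient; I will instead argue that $[N,N] = N$ is incompatible with the existence of the \emph{soluble} point stabilisers / with $G \leq \Hol(N)$ in the relevant context — but the Lemma as stated has no solubility hypothesis.

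Given the constraints, here is the proof I would actually write, trusting that in this paper's setting (where $N$ will ultimately be soluble, cf. the standing assumptions) the perfect case is vacuous or handled by transitivity:

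\begin{proof}[Proof plan]
Suppose $N$ is nontrivial. Since the derived subgroup $[N,N]$, the Frattini subgroup $\Phi(N)$, and more generally every characteristic subgroup $M$ of $N$ is normal in $N$ and is preserved by every automorphism of $N$, each such $M$ satisfies $\theta_g(M) = M$ for all $g = (\alpha_g,\theta_g) \in G$ and hence is a $G$-invariant normal subgroup of $N$. By irreducibility (Definition \ref{irred}), any such $M$ must be $\{1\}$ or $N$.

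First, $[N,N]$ is characteristic, so $[N,N] = \{1\}$ or $[N,N] = N$. If $[N,N] = N$ then $N$ is perfect; but a transitive action of $G$ on $N$ with $G \leq \Hol(N)$ makes the left translations by elements of $N$ a regular subgroup of $G$, and passing to the abelianisation would give... \emph{[here insert the paper's preferred ruling-out of the perfect case; e.g.\ if $N$ is perfect it has a unique maximal normal subgroup quotient... — or simply cite that in the situations of interest $N$ is soluble]}. Hence $N$ is abelian.

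Now $N$ is a nontrivial finite abelian group. For the smallest prime $p$ dividing $|N|$, the subgroup $M = \{\, x \in N : px = 0 \,\}$ is characteristic and nontrivial, so $M = N$; thus every nonidentity element of $N$ has order $p$, i.e.\ $N$ is an elementary abelian $p$-group.
\end{proof}

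The main obstacle is ruling out the case $[N,N] = N$, i.e.\ that $N$ is perfect: this cannot follow from transitivity and irreducibility alone and must use an additional feature of the setting — either a standing solubility hypothesis on $N$, or the structure of $\Hol(N)$ together with the point stabilisers. I would resolve it by the observation that $N$ itself, acting by left translation, is a transitive (indeed regular) subgroup of $\Hol(N)$ contained in $G$; if $N$ is characteristically simple and nonabelian, a separate short argument (or an appeal to results cited later in the paper) is needed, and in the arithmetic/brace-theoretic applications $N$ is soluble from the outset, which makes the perfect case vacuous.
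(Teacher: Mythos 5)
Your core idea --- that every characteristic subgroup of $N$ is automatically a $G$-invariant normal subgroup, so irreducibility forces $N$ to have no non-trivial proper characteristic subgroup --- is exactly the paper's first step. Your diagnosis of the sticking point is also correct: transitivity and irreducibility alone do not rule out $N$ being characteristically simple and non-abelian, and the missing ingredient is precisely the solubility of $N$. That is what the paper uses: the lemma is only applied to solutions of Question \ref{main-qn}, where $N$ is soluble by hypothesis, and the paper's proof runs ``$N$ is characteristically simple, hence a direct product of isomorphic simple groups; since $N$ is soluble, the conclusion follows.'' Had you simply committed to invoking solubility (so that $[N,N]\neq N$ for non-trivial $N$, whence $[N,N]=\{1\}$, followed by your $p$-torsion argument), your proof would be complete and essentially equivalent to the paper's.

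As submitted, however, there is a genuine gap: the case $[N,N]=N$ is left as a bracketed placeholder, and the one concrete device you offer to close it is false. The left translations by elements of $N$ form a regular subgroup of $\Hol(N)$, but there is no reason whatsoever that they lie in $G$; the paper in fact shows the opposite in the situation of interest --- by Proposition \ref{trans} and the discussion at the start of \S\ref{clifford}, the subgroup $\Tr(G)$ of translations in $G$ is \emph{trivial} for an irreducible solution. So the claim that ``the left translations by elements of $N$ [form] a regular subgroup of $G$'' cannot be used to rule out perfect $N$. The repair is not a new argument about perfect groups but simply to read the standing hypothesis of Question \ref{main-qn} (solubility of $N$) into the lemma, as the paper does.
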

\begin{proof}
Since $N$ has no non-trivial proper $G$-invariant normal subgroup, it has no non-trivial proper characteristic subgroup. Thus $N$ is characteristically simple, so $N$ is the direct product of isomorphic simple groups by \cite[3.3.15]{Robinson}. Since $N$ is soluble, the conclusion follows. 
\end{proof}

Perhaps surprisingly, irreducible solutions of Question \ref{main-qn} do exist.

\begin{theorem} \label{irred-constr}
Let $r \geq 1$, and let $H$ be a transitive soluble subgroup of the symmetric group $S_r$ of degree $r$. Let $N$ be an elementary abelian group of order $2^{3r}$. Then $\Hol(N)$ contains a transitive subgroup $G$ such that $ G \cong  \GL_3(2) \wr H$, and $N$ contains no non-trivial proper $G$-admissible subgroup. In particular, $(G,N)$ is an irreducible solution of Question \ref{main-qn}.
\end{theorem}

An explicit construction for the group $G$ in Theorem \ref{irred-constr} will be given in \S\ref{example-sec}.

The central result of this paper is that Theorem \ref{irred-constr} gives all the irreducible solutions. 

\begin{theorem}  \label{irred-class}
If $(G,N)$ is any irreducible solution to Question \ref{main-qn} then 
$(G,N)$ is as described in Theorem \ref{irred-constr}. In particular, $N$ is an elementary abelian $2$-group, any non-abelian composition factor of $G$ is isomorphic to the simple group $\GL_3(2)$ of order $168$, and $N$ contains no non-trivial proper $G$-admissible subgroup. Moreover, given $r$ and $H$ as in Theorem \ref{irred-constr}, the corresponding subgroup $G$ of $\Hol(N)$ is unique up to conjugation by $\Aut(N)$. 
\end{theorem}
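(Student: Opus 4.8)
The plan is to follow the reduction sketched in the introduction, working inside the affine group. Since $(G,N)$ is irreducible, Lemma~\ref{irred-el-ab} lets me identify $N$ with a vector space $V=\F_p^n$ and $\Hol(N)$ with $\Aff(V)=V\rtimes\GL(V)$; irreducibility of $(G,N)$ then says exactly that $V$ is an irreducible module for the linear part $\bar G$ of $G$ (the image under the projection $\Aff(V)\to\GL(V)$), while transitivity says that the $G$-orbit of $0$ is all of $V$. The point stabiliser $G_0=G\cap\GL(V)$ is soluble of index $p^n$. A first observation is that $G$ contains no translations: $G\cap V$ is a $\bar G$-submodule of $V$, hence $0$ or $V$ by irreducibility, and if it were $V$ then $G=V\rtimes G_0$ would be soluble. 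So $G\cap V=1$, $G\cong\bar G$, and the problem reduces to studying a linear group $\bar G\le\GL(V)$ together with a surjective $1$-cocycle $\bar G\to V$ recording the translation parts of the elements of $G$.

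The next step (\S\ref{subgps-affine} and \S\ref{clifford}) is to constrain $G$. Since $G_0$ is soluble of $p$-power index, every non-abelian composition factor $S$ of $G$ has a soluble subgroup of $p$-power index. Decomposing $V$ as a module over $\soc(G)$ by Clifford theory then forces the faithful irreducible constituents to have fairly large dimension, and produces the inequality~(\ref{key-ineq}) relating $|S|$ and $|\Aut(S)|$ to $n$, to the number of simple factors of $\soc(G)$, and to the constituent dimensions; this is recorded, independently of the Classification, in Theorem~\ref{no-CFSG}. Setting up~(\ref{key-ineq}) carefully is delicate, but the real obstacle is the next step.

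The heart of the proof, and the step I expect to be hardest, is the appeal to the Classification of Finite Simple Groups. Guralnick's determination of the non-abelian finite simple groups with a subgroup of prime-power index, together with the requirement that the subgroup be soluble, leaves only $\PSL_2(q)$ with $q+1$ a prime power, plus $\PSL_3(2)\cong\PSL_2(7)$ and $\PSL_3(3)$, and in each case it fixes the prime $p$ (Corollary~\ref{simple-sol-p}). The difficulty is the infinite family $\PSL_2(q)$, $q+1=p^a$ (for instance $q$ a Mersenne prime): one needs lower bounds on the dimension of the smallest faithful $\F_p$-module of $\PSL_2(q)$ that grow quickly enough in $q$ for~(\ref{key-ineq}) to fail for all but finitely many $q$, followed by a finite check to eliminate $\PSL_3(3)$, $\PSL_2(8)$ and the remaining small cases. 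The conclusion (Theorem~\ref{just-168}) is that $p=2$ and the only surviving non-abelian composition factor is $\PSL_2(7)\cong\GL_3(2)$, its $3$-dimensional natural module being exactly what~(\ref{key-ineq}) allows.

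It remains to reconstruct $(G,N)$ (\S\ref{conclusion}). With $p=2$ and every non-abelian composition factor isomorphic to $\PSL_2(7)$, the layer of $G$ is a direct product of components each isomorphic to $\PSL_2(7)$ rather than to its double cover $\SL_2(7)$: the restriction of $V$ to $\soc(G)$ is semisimple, so by Schur's lemma over $\F_2$ a central involution of a component would act trivially on it, contradicting faithfulness. Write $L\cong\PSL_2(7)^r$ for the layer. Clifford theory together with~(\ref{key-ineq}) forces the homogeneous components of $V|_L$ to be copies of the $3$-dimensional natural module, one per factor, so $n=3r$ and $L$ acts block-diagonally as $\GL_3(2)^r$; then $G$ permutes the $r$ blocks, transitively by irreducibility, through a quotient $H\le S_r$ that is soluble because all non-abelian composition factors of $G$ lie in $L$. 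As $G\cap V=1$, after conjugating by $\Aut(N)=\GL_n(2)$ the group $\bar G\cong G$ normalises the standard block-diagonal $\GL_3(2)^r$, so lies in $\GL_3(2)\wr S_r$; containing $L=\GL_3(2)^r$ and mapping onto $H$, it must equal $\GL_3(2)\wr H$, acting on $V=(\F_2^3)^r$ by the product action, as in Theorem~\ref{irred-constr}. For the uniqueness clause, the linear action being now fixed up to $\Aut(N)$-conjugacy, the transitive subgroups $G\le\Aff(V)$ with $G\cap V=1$ correspond to surjective $1$-cocycles $\bar G\to V$. By Shapiro's lemma $H^1(\bar G,V)\cong H^1(\GL_3(2),\F_2^3)$, which has order $2$; conjugation by translations preserves transitivity and acts trivially on $H^1$, so the surjective cocycles form exactly one of the two cohomology classes (the non-trivial one, since a coboundary can never be surjective), and within that class they are mutually conjugate by elements of $\bar G\le\Aut(N)$ (using the surjectivity of one of them). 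This yields a unique $G$ up to $\Aut(N)$-conjugacy and completes the proof.
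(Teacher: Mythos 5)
Your outline matches the paper's strategy up to and including the CFSG step (reduction to $V=\F_p^n$ inside $\Aff(V)$, no translations, soluble subgroups of $p$-power index in composition factors, Clifford theory giving (\ref{key-ineq}), Guralnick's theorem forcing $p=2$ and $T\cong\PSL_2(7)$). But there is a genuine gap at the point where you claim that ``Clifford theory together with (\ref{key-ineq}) forces the homogeneous components of $V|_L$ to be copies of the $3$-dimensional natural module, one per factor.'' This is not forced by the numerical constraints. Besides the $8$-dimensional Steinberg module (which a dimension count does kill), the constraints allow an irreducible $\soc(G)$-constituent of dimension $9$, namely the tensor product of the two natural $3$-dimensional modules for a pair of simple factors: with $2m$ factors one gets $\dim V=9m$ while $v_2(|G|)\leq 2m\cdot v_2(|\Aut(T)|)+v_2((2m)!)<10m$, so both (\ref{key-ineq}) and the transitivity bound $\dim V\leq v_2(|G|)$ are satisfied. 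These are exactly cases (i) and (ii) of Theorem \ref{just-168}, and the paper needs a separate, non-numerical argument to exclude them: using the fixed-point-freeness of the $3\otimes 3$ module to split off $\Aff(U)$ (Proposition \ref{dir-sum}), then the commuting computation for tensor products (Proposition \ref{tensor-product}) applied to an element conjugate to $A$ with $A-I$ invertible, to force one simple factor to fix $0_V$ and hence lie in the soluble stabiliser --- a contradiction. This step uses the affine (translation) structure and the solubility of point stabilisers in an essential way; no refinement of the module-theoretic counting will do it, so your argument as written does not reach the block-diagonal picture.

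Two smaller points. Your justification that $H\leq S_r$ is soluble (``all non-abelian composition factors of $G$ lie in $L$'') is itself unproved at that stage; the paper instead shows that $J=\soc(G)$ is already transitive on $V$ (each block stabiliser has index $8$), so $G=JG'$ and $G/J$ is a quotient of the soluble stabiliser $G'$. Finally, your uniqueness argument via $1$-cocycles, Shapiro's lemma and $|H^1(\GL_3(2),\F_2^3)|=2$, with the upgrade from translation-conjugacy to $\Aut(N)$-conjugacy using transitivity, is a legitimate alternative to the paper's explicit computations (Lemma \ref{unique-168} and Proposition \ref{J-norm}), but it rests on the quoted cohomology fact and on the Shapiro/inflation-restriction reduction, which you assert rather than prove; as a sketch this is acceptable, whereas the main gap above is not.
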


\begin{proof}[Proof of Theorem \ref{main-thm} assuming Theorem \ref{irred-class}]
Let $(G,N)$ be a 
minimal counterexample to Conjecture \ref{main-conj}. Let $p$ be a prime such that $N$ has a maximal normal subgroup of index $p$, and let $L$ be the intersection of all such subgroups. Then $L$ is a proper characteristic subgroup of $N$, and $N/L$ is an elementary abelian $p$-group. In particular, $L$ is a $G$-invariant normal subgroup of $N$. Let $M$ be a maximal proper $G$-invariant normal subgroup of $N$ containing $L$, and let $\overline{N}=N/M$. By Proposition \ref{quot} there is a normal subgroup $K$ of $G$ so that $\overline{G}=G/K$ is a transitive subgroup of $\Hol(\overline{N})$. The stabiliser in $\overline{G}$ of the identity in $\overline{N}$ is $M_*/K$ where $M_*$ is soluble because $(G,N)$ is 
minimal. Hence $K$ is soluble. Since $G$ is insoluble, it follows that $\overline{G}$ is insoluble. Thus $(\overline{G},\overline{N})$ is a solution to Question \ref{main-qn}, and it is irreducible by the maximality of $M$. By Theorem \ref{irred-class}, $(\overline{G},\overline{N})$ is then  as described in Theorem \ref{irred-constr}. 
In particular, $|\overline{N}|=2^{3r}$ for some $r \geq 1$, so $p=2$. 
\end{proof}

\section{Some solutions to Question \ref{main-qn}} \label{example-sec}

Our main goal in this section is to prove Theorem \ref{irred-constr}. First we give an alternative description of $\Hol(N)$ when $N$ is an elementary abelian group, say of order $p^n$ for some prime $p$ and some $n \geq 1$. By Lemma \ref{irred-el-ab}, this situation arises for any irreducible solution $(G,N)$ to Question \ref{main-qn}. We identify $N$ with the vector space $V=\F_p^n$ of column vectors over the field $\F_p$ of $p$ elements. We write $V$ additively and denote its identity element $e$ by $0_V$. The group $\Aut(N)$ is then identified with the group $\GL_n(p)$ of invertible $n \times n$ matrices over $\F_p$, and $\Hol(N)=N \rtimes \Aut(N)$ is identified with the affine group $\Aff(V) = V \rtimes \GL_n(p)$ of $V$. 

For any subgroup $G$ of $\Aff(V)$, the natural homomorphism $G \to \Aut(V)=\GL_n(p)$ then makes $V$ into an 
$\F_p[G]$-module. We shall refer to the corresponding action of $G$ as the {\em linear action}, and the original action of $G$ as a subgroup of $\Aff(V)$ as the {\em affine action}.
Thus, in a solution to Question \ref{main-qn}, $G$ is transitive on $V$ in the affine action by hypothesis, but is not transitive in the linear action since $\{0_V\}$ is an orbit of cardinality $1$.  

Changing terminology, we will refer to subspaces $W$ of $V$ rather than to subgroups $M$ of $N$. Then $M$ is a $G$-invariant subgroup if and only if the corresponding subspace $W$ is an $\F_p[G]$-submodule of $V$ in the linear action. Moreover, $(G,V)$ is irreducible in the sense of Definition \ref{irred} if and only if $V$ is an irreducible $\F_p[G]$-module.

We next describe a convenient notational device for working with $\Aff(V)$.  An element of $\Aff(V)$ has the form $(v,A)$ for $v \in V$ and $A \in \GL_n(p)$. 
 We write this element as a block matrix of size $n+1$: 
$$ \left( \begin{array}{c|c} A & v \\ \hline 0 & 1 \end{array}, \right), $$
where the lower left entry is a row vector. Then matrix multiplication coincides with the multiplication
$$ (v,A) (w,B)=(v+Aw, AB) $$
in $\Hol(V)$ as specified in (\ref{hol-mult}). 

We now turn to the proof of Theorem \ref{irred-constr}. Let 
$T=\GL_3(2)$, the non-abelian simple group of order $168$. We begin by constructing a pair $(G,V)$ as in Question \ref{main-qn} with $G \cong T$ and $V=\F_2^3$. Thus we seek a transitive embedding of $T=\Aut(V)$ into 
$\Aff(V)$, say taking each $M \in T$ to 
\begin{equation} \label{def-psi}
   \hM = \left( \begin{array}{c|c} M & \psi(M) \\ \hline 0 & 1 \end{array} \right).
\end{equation}
for some function $\psi : T \to \F_2^3$. Then $\psi$ must satisfy the cocycle condition $\psi(M_1 M_2) = \psi(M_1) + M_1 \psi(M_2)$ for all $M_1$, $M_2 \in T$. We require $\psi$ to be surjective, so the stabiliser $G'$ of $0_V$ will be the image of a subgroup $T'$ of $T$ of index $8$. The image of any Sylow $2$-subgroup $P$ of $T$ will therefore be a complement to $G'$ in $G$, and so will act regularly on $V$. Now $T$ contains $8$ subgroups of index $8$, namely the normalisers of the $8$ Sylow $7$-subgroups. We arbitrarily choose $T'$ to be generated by the matrices  
\begin{equation} \label{AB}
   A=\left(\begin{array}{ccc} 0 & 0 & 1\\ 1 & 0 & 1 \\
   0 & 1 & 0 \end{array} \right), \qquad
 B=\left(\begin{array}{ccc} 1 & 0 & 0  \\ 0 & 0 & 1  \\
   0 & 1 & 1  \end{array} \right)
\end{equation} 
and $P$ to be generated by the matrices 
$$ C=\left(\begin{array}{ccc} 1 & 1 & 0 \\ 0 & 1 & 1  \\
   0 & 0 & 1  \end{array} \right), \qquad
 D=\left(\begin{array}{ccc} 1 & 0 & 0  \\ 0 & 1 & 1  \\
   0 & 0 & 1  \end{array} \right).  $$
Then $A$, $B$, $C$, $D$ satisfy the relations 
\begin{equation} \label{AB-relation}
  A^7= B^3=I, \qquad 
      BA=A^2 \, B,
\end{equation}
\begin{equation} \label{CD-relation}
    C^4=D^2=I \neq C^2, \qquad
     DCD^{-1}= C^3,
\end{equation}
and also
\begin{equation} \label{ABCD-relation} 
  CA=A^3 C^2 D, 
   \quad CB = B^2 C^2 D, 
   \quad  DA = A C^3 D, 
   \quad  DB = B^2 D. 
\end{equation}
The image of $T$ in $\Aff(V)$ is then generated by
\begin{equation} \label{hA-hB}
 \hA=\left(\begin{array}{ccc|c} 0 & 0 & 1 & 0 \\ 1 & 0 & 1 & 0 \\
   0 & 1 & 0 & 0 \\ \hline 0 & 0 & 0 & 1 \end{array} \right), \qquad
 \hB=\left(\begin{array}{ccc|c} 1 & 0 & 0 & 0 \\ 0 & 0 & 1 & 0 \\
   0 & 1 & 1 & 0 \\ \hline 0 & 0 & 0 & 1 \end{array} \right) 
\end{equation}
and 
$$ \hC=\left(\begin{array}{ccc|c} 1 & 1 & 0 & c_1 \\ 0 & 1 & 1 & c_2 \\
   0 & 0 & 1 & c_3 \\ \hline 0 & 0 & 0 & 1 \end{array} \right), \qquad
 \hD=\left(\begin{array}{ccc|c} 1 & 0 & 0 & d_1 \\ 0 & 1 & 1 & d_2 \\
   0 & 0 & 1 & d_3 \\ \hline 0 & 0 & 0 & 1 \end{array} \right) $$ 
for some $c_1$, $c_2$, $c_3$, $d_1$, $d_2$, $d_3 \in \F_2$. 
These must be chosen so that the subgroup $G=\langle \hA, \hB, \hC, \hD \rangle$ of $\Aff(V)$ acts transitively on $V$ and has order no larger than $168$. Thus (\ref{AB-relation}), (\ref{CD-relation}) and (\ref{ABCD-relation}) must remain valid when $A$, $B$, $C$, $D$ are replaced by $\hA$, $\hB$, $\hC$, $\hD$. The relation 
$\hD \hB=\hB^2 \hD$ gives $d_2=d_3=0$. Then $d_1=1$ since $\hD$ is not in the stabiliser of $0_V$. The relation 
$\hD \hA=\hA \hC^3 \hD$ then gives $c_1=0$, $c_2=c_3=1$.  
Thus we have 
$$ \hC=\left(\begin{array}{ccc|c} 1 & 1 & 0 & 0 \\ 0 & 1 & 1 & 1 \\
   0 & 0 & 1 & 1 \\ \hline 0 & 0 & 0 & 1 \end{array} \right), \qquad
 \hD=\left(\begin{array}{ccc|c} 1 & 0 & 0 & 1 \\ 0 & 1 & 1 & 0 \\
   0 & 0 & 1 & 0 \\ \hline 0 & 0 & 0 & 1 \end{array} \right). $$ 

One can verify by direct calculation that these matrices $\hA$, $\hB$, $\hC$, 
$\hD$ do indeed satisfy the analogues of (\ref{AB-relation}), (\ref{CD-relation}) and (\ref{ABCD-relation}). In particular, the groups $G'=\langle \hA, \hB \rangle$ and 
$\hP=\langle \hC, \hD \rangle$ have orders $21$ and $8$ respectively, and $G=G' \hP = \hP G'$. Thus $|G|=168$ and $G \cong T$. Moreover, the stabiliser of $0_V$ contains the maximal subgroup $G'$ of $G$, but does not contain $\hC$, so $G$ acts transitively on $V$. 

We summarise what we have just shown.

\begin{lemma}  \label{unique-168}
Let $V=\F_2^3$. Then $\Aff(V)$ contains a unique transitive subgroup $G \cong T$ in which the stabiliser of $0_V$ is generated by the matrices $\hA$, $\hB$ in (\ref{hA-hB}). Equivalently, there is a unique surjection $\psi: T \to \F_2^3$ such that (\ref{def-psi}) embeds $T$ as  a transitive subgroup of $\Aff(V)$ and $\psi(A)=\psi(B)=0_V$ for the matrices $A$, $B$ in (\ref{AB}). In total, there are exactly $8$ transitive subgroups of $\Aff(V)$ isomorphic to $T$.
\end{lemma}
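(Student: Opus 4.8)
\emph{Approach.} The key step is to reformulate ``transitive subgroup $G\cong T$ of $\Aff(V)$'' in cocycle language. The translation subgroup $V$ is normal in $\Aff(V)=V\rtimes\Aut(V)$, and $T$ is simple and non-abelian, so any subgroup $G\cong T$ meets $V$ trivially; hence the linear-action homomorphism $\Aff(V)\to\Aut(V)=\GL_3(2)=T$ restricts to an isomorphism from $G$ onto $\Aut(V)$. Thus $G=\{(\psi(M),M):M\in T\}$ for a function $\psi\colon T\to V$, and the rule $(v,A)(w,B)=(v+Aw,AB)$ forces $\psi(MM')=\psi(M)+M\psi(M')$, i.e.\ $\psi$ is a $1$-cocycle. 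The $G$-orbit of $0_V$ equals $\psi(T)$, so $G$ is transitive exactly when $\psi$ is surjective, and then the stabiliser of $0_V$ is $\{(0,M):M\in\psi^{-1}(0_V)\}$, with $\psi^{-1}(0_V)$ a subgroup of $T$ of index $|V|=8$. This sets up a bijection between transitive subgroups $G\cong T$ of $\Aff(V)$ and surjective $1$-cocycles $\psi\colon T\to V$, and it identifies the two formulations in the lemma.

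\emph{Uniqueness of $G$ (the ``equivalently'' clause).} If the stabiliser of $0_V$ is $\langle\hA,\hB\rangle$, reading off translation parts gives $\psi(A)=\psi(B)=0_V$, so $\psi$ vanishes on $T'=\langle A,B\rangle$; since $[T:T']=8$ and $\psi$ is surjective, $\psi^{-1}(0_V)=T'$. As $T=T'P=\langle A,B,C,D\rangle$, the group $G$ is generated by $\hA,\hB$ together with the unique lifts $\hC=(\psi(C),C)$, $\hD=(\psi(D),D)$, and every relation holding among $A,B,C,D$ in $T$ — in particular (\ref{AB-relation}), (\ref{CD-relation}), (\ref{ABCD-relation}) — lifts to the same relation among $\hA,\hB,\hC,\hD$ in $G$. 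The calculation already carried out above (using $\hD\hB=\hB^2\hD$ to get $d_2=d_3=0$, then $\hD\notin$ stabiliser to get $d_1=1$, then $\hD\hA=\hA\hC^3\hD$ to get $c_1=0$, $c_2=c_3=1$) then determines $\psi(D)$ and $\psi(C)$ uniquely. Combined with the direct verification above that this $\psi$ does produce a transitive $G\cong T$, this proves existence and uniqueness of such a $\psi$, equivalently of such a $G$.

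\emph{The count ``exactly $8$''.} The index-$8$ subgroups of $T=\GL_3(2)$ are precisely the $8$ normalisers of its $8$ Sylow $7$-subgroups: such a subgroup has order $21$, contains a Sylow $7$-subgroup as a normal subgroup, and hence equals its normaliser (also of order $21$). By the first paragraph, the stabiliser of $0_V$ in any transitive $G\cong T$ is the linear copy $\{(0,M):M\in T_i\}$ of one of these subgroups $T_1,\dots,T_8$. For a fixed $i$, pick $x\in\Aut(V)$ with $xT'x^{-1}=T_i$; conjugation by $(0,x)\in\Aff(V)$ carries the constructed $G$ to a transitive subgroup $\cong T$ whose stabiliser of $0_V$ is the linear copy of $T_i$ (independent of the choice of $x$, since any two choices differ by an element of $N_T(T')=T'$, whose linear copy lies in the constructed $G$). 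Conjugating an arbitrary such subgroup back by $(0,x^{-1})$ and applying the uniqueness just proved shows it is the \emph{only} transitive $G\cong T$ with that stabiliser. These $8$ subgroups have distinct stabilisers of $0_V$, hence are distinct, and every transitive $G\cong T$ is one of them; so there are exactly $8$.

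\emph{Expected main obstacle.} The matrix computations are routine and largely displayed already; the genuine content is the cocycle reformulation of the first paragraph, which converts the problem into a finite check, together with the verification that the handful of relations used actually pins $\psi$ down rather than merely constraining it — this depends on the specific linear algebra of the chosen generators, e.g.\ that $B$ fixes only a line and that $I+C+C^2=C^{-1}$ is invertible. A slicker but less elementary route to the number $8$ would be to compute $|Z^1(T,V)|=|B^1(T,V)|\cdot|H^1(T,V)|=8\cdot 2$ and to check that precisely the $8$ cocycles outside $B^1(T,V)$ are surjective; this requires the input $\dim_{\F_2}H^1(\GL_3(2),\F_2^3)=1$, which the Sylow argument sidesteps.
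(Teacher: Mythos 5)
Your proof is correct and follows essentially the same route as the paper: your cocycle $\psi$ is exactly the paper's translation-part function, the relations $\hD\hB=\hB^2\hD$ and $\hD\hA=\hA\hC^3\hD$ pin down $\psi(D)$ and $\psi(C)$ just as in the paper's construction, and the count of $8$ via conjugating the order-$21$ stabilisers is the paper's closing argument. The only material you add is the explicit justification that any transitive $G\cong T$ meets the translations trivially and so projects isomorphically onto $\Aut(V)$, hence has the form $\{(\psi(M),M)\}$ -- a point the paper leaves implicit.
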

\begin{proof} 
Everything except the last sentence follows from the previous discussion. If $G_1 \cong T$ is a transitive subgroup of $\Aff(V)$, then conjugation by a suitable element of $\Aut(V)$ takes the stabiliser of $0_V$ in $G_1$ to $G'=\langle \hA, \hB \rangle$, and hence takes $G_1$ to $G$. Since $G'$ has $8$ conjugates in $\Aut(V)$, there are $8$ possible groups $G_1$.
\end{proof}

\begin{remark}  \label{HGS168}
We interpret Lemma \ref{unique-168} in terms of Hopf-Galois structures. If
$E/K$ is a Galois extension with $\Gal(E/K)=G \cong T$, and $L=E^{G'}$ where $G'$ is a subgroup of $G$ of index $8$, then the existence of a transitive embedding $T \hookrightarrow \Aff(V)=\Hol(V)$ shows that $L/K$ admits at least one Hopf-Galois structure of elementary abelian type $V$. Since there are $8$ transitive subgroups of $\Aff(V)$ isomorphic to $T$, \cite[Proposition 1]{unique} tells us that the number of these Hopf-Galois structures is $8 |\Aut(G,G')|/|\Aut(V)|$, where $\Aut(G,G')$ consists of the automorphisms of $G$ which fix $G'$. Now $|\Out(G)|=2$, the non-trivial element being represented by the automorphism taking each matrix in $\GL_3(2)$ to the transpose of its inverse. Thus $|\Aut(G)|=2 |G|=336$. The automorphisms of $G$ permute the $8$ subgroups of index $8$ transitively, so $|\Aut(G,G')|= |\Aut(G)|/8=42$. Since $\Aut(V)\cong G$, we find that $L/K$ admits $2$ Hopf-Galois structures of type $V$.
\end{remark}

\begin{remark}
Crespo and Salguero \cite{Cr-Sal} have computed all Hopf-Galois structures on  separable field extensions of degree up to $11$. The case in Remark \ref{HGS168} can be found in their paper, although no comment is made there about the appearance of an insoluble Galois group. 
To explain this in more detail, we adopt the notation $8Ti$ used by Crespo and Salguero for the degree $8$ permutation group denoted in \textsc{Magma} by \texttt{TransitiveGroup(8,i)}. Then $T=\GL_3(2)$ is labelled $8T37$, and \cite[Table 2]{Cr-Sal} shows that there are $2$ Hopf-Galois structures for this permutation group, as calculated in Remark \ref{HGS168}. Altogether, there are $5$ insoluble transitive permutation groups of degree $8$: $8T37 \cong \GL_3(2)\cong \PSL_2(7)$, $8T43 \cong \SL_2(7)$, $8T48 \cong \Aff(\F_2^3)$, $8T49 \cong A_8$ and $8T50 \cong S_8$. Of these, only $8T37$ and $8T48$ occur in \cite{Cr-Sal}. For the group $8T48$, the stabiliser of a point is $\PSL_2(7)$, which is not soluble. 
\end{remark}

In Lemma \ref{unique-168}, any non-trivial $G$-admissible subgroup $W$ of $V$ must be 
invariant under the stabiliser $G'=\langle \hA, \hB \rangle$ of $0_V$. Since $G'$ is transitive on $V \bs\{0_V\}$, we have $W=V$. Hence 
Lemma \ref{unique-168} proves Theorem \ref{irred-constr} when $r=1$. We now prove the general case.

\begin{proof}[Proof of Theorem \ref{irred-constr}]

Let $r \geq 1$ and let $H$ be a transitive soluble subgroup of $S_r$. We will construct a transitive subgroup $G \cong T \wr H$ of $\Aff(V)$ where $V=\F_2^{3r}$, and we will
show that the resulting pair of groups $(G,V)$ is a solution to Question \ref{main-qn} such that $V$ contains no non-trivial proper $G$-admissible subgroup. We view elements of $\Aut(V)=\GL_{3r}(2)$ as block matrices, where each block is a $3 \times 3$ matrix over $\F_2$. 
Thus the elements of $\Aff(V) < \GL_{3r+1}(2)$ become block matrices in which each row (except the last)
consists of $r$ blocks followed by a vector in $\F_2^3$. The final row of course consists of $3r$ zeros, followed by $1$.

Let $J \leq \Aff(V)$ be the group of such block matrices of the form
\begin{equation} \label{M-matrix}
 \left( \begin{array}{cccc|c} M_1 & 0 & \cdots & 0 & \psi(M_1) \\ 
                                                   0 & M_2 & \cdots & 0 &  \psi(M_2) \\ 
                                                   \vdots & \vdots &   & \vdots & \vdots \\ 
                                                  0 & 0 & \cdots & M_r & \psi(M_r) \\ \hline
                                                   0 & 0 & \cdots & 0 & 1
                             \end{array} \right),
\end{equation}                            
where $M_1$, $M_2$, \ldots, $M_r \in T=\GL_3(2)$ and $\psi$ is as in Lemma \ref{unique-168}.
Then $J \cong T^r$. We write $J=T_1 \times T_2 \times \cdots \times T_r$, where
$T_i$ consists of the elements of $J$ with $M_k=I$ for all $k \neq i$. We also write $A_i$ (respectively, $B_i$) for the matrix in $T_i$ whose $i$th component is the matrix $A$ (respectively, $B$) of (\ref{AB}), and $T_i'$ for the subgroup generated by $A_i$ and $B_i$. Then the stabiliser of $0_V$ in $J$ is the group $J' = T'_1 \times  T'_2 \times \cdots \times T'_r$. Thus $J'$ is a soluble group of order $21^r$, and the pair $(J,V)$ is a solution of Question \ref{main-qn}. Writing $V=V_1 \oplus \cdots \oplus V_r$ where $V_i$ is the $3$-dimensional subspace corresponding to the $i$th block, we see that each $V_i$ is an $\F_2[J]$-submodule of $V$. Thus $(J,V)$ is not irreducible if $r \geq 2$.

For each permutation $\pi \in S_r$, let $P(\pi) \in \Aff(V)$ be the block matrix whose block in position $(i,j)$ is the identity matrix if $i=\pi(j)$, and the zero matrix otherwise, with the final column of $P(\pi)$ consisting of zero vectors except for a $1$ in the final entry. We have then embedded $S_r$ as a subgroup of $\Aut(V)$ within $\Aff(V)$. Clearly this subgroup normalises $J$. Explicitly, if $X \in J$ is the element in (\ref{M-matrix}) and $\pi \in S_r$, we have 
$$ P(\pi) X P(\pi)^{-1} =                              
\left( \begin{array}{cccc|c} 
      M_{\pi^{-1}(1)} & 0 & \cdots & 0 & \psi(M_{\pi^{-1}(1)}) \\ 
      0 & M_{\pi^{-1}(2)} & \cdots & 0 &  \psi(M_{\pi^{-1}(2)}) \\ 
        \vdots & \vdots &   & \vdots & \vdots \\
        0 & 0 & \cdots &  M_{\pi^{-1}(r)} & \psi(M_{\pi^{-1}(r)}) \\ \hline
                                                   0 & 0 & \cdots & 0 & 1
                             \end{array} \right). $$
We identify the given subgroup $H$ of $S_r$ with its image $P(H)$, and define $G$ to be the subgroup $J \rtimes H \cong T \wr H$ in $\Aff(V)$. 
Then $G$ is a transitive subgroup of $\Aff(V)$ in which the stabiliser of $0_V$ is the soluble group $G'=J' \rtimes H$. Thus $(G,V)$ is a solution of Question \ref{main-qn}. 

Finally, we verify that $V$ contains no non-trivial proper $G$-admissible subspace. In particular, this means that $V$ contains no non-trivial proper $G$-invariant subspace, so the pair $(G,V)$ is irreducible.
Let $W \neq \{0_V\}$ be a $G$-admissible subspace of $V$. Then $W_*$ properly contains $G'$. Thus $W_*$ contains an element $(t_1, \dots, t_r) \in J$ with $t_j \not \in T_j'$ for at least one index $j$. Let $v=(v_1, \ldots, v_r)$ be any element of $V$ satisfying the condition that, for $1 \leq i \leq r$, we have $v_i =0$ if and only if $t_i \in T_i'$. Since $T'$ is transitive on $\F_2^3 \bs \{0_V\}$, there exists $(s_1, \ldots, s_r) \in J'$ so that $(s_1, \ldots, s_r) (t_1, \ldots, t_r) \cdot 0_V = v$. Since $W_*$ is a group containing $J'$, it follows that $v \in W$. Thus $W$ contains the subspace of $V$ generated by all such $v$, which is $\bigoplus_{t_i \not \in T_i'} V_i$. In particular, $V_j \subseteq W$. Since $H \leq W_*$ and $H$ is transitive on the $r$ components, it follows that $W$ contains $V_i$ for every $i$. Hence $W=V$.  
\end{proof}

\section{Generalities on subgroups of affine groups} \label{affine}

The remainder of the paper is devoted to the proof of Theorem \ref{irred-class}.

In this section, we give some preliminary results on the affine group $\Aff(V)$ for $V=\F_p^n$. Propositions \ref{dir-sum} and 
\ref{tensor-product} will be used in Section \ref{conclusion}. 

\subsection{Translations}

\begin{definition}
For any  $G \leq \Aff(V)$, the subgroup of {\em translations} in $G$ is
$$ \Tr(G) = G \cap \{ (v, I) : v \in V\}. $$
\end{definition}

\begin{proposition}  \label{trans}
$\Tr(G)$ is a normal subgroup of $G$ and is isomorphic to the subgroup
$$ U = \{ u \in V : (u,I) \in G \} $$
of $V$. In particular, $\Tr(G)$ is an elementary abelian $p$-group. Moreover, $U$ is a $G$-invariant subspace of $V$, and if $U=V$ then $G/\Tr(G)$ is isomorphic to the stabiliser $G'$ of $0_V$ in $G$.
\end{proposition}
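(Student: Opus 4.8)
The plan is to derive all three assertions from a single conjugation identity in $\Aff(V)$. First I would set up the map $\iota : U \to \Tr(G)$, $u \mapsto (u,I)$, which is a bijection by the very definitions of $U$ and $\Tr(G)$. It is a group homomorphism: by the multiplication rule (\ref{hol-mult}), in its affine form $(v,A)(w,B)=(v+Aw,AB)$, we have $(u,I)(u',I)=(u+u',I)$, so $\iota(u)\iota(u')=\iota(u+u')$. It follows that $U$ is a subgroup of the additive group $(V,+)$: it contains $0_V$ since $(0_V,I)\in G$, it is closed under addition, and it is closed under inverses because $(u,I)^{-1}=(-u,I)$. Hence $\Tr(G)\cong U$ as groups. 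Since $(V,+)$ is an elementary abelian $p$-group, any subgroup of it — in particular $U$, and therefore $\Tr(G)$ — is again elementary abelian of exponent $p$; moreover a subgroup of an elementary abelian $p$-group is automatically an $\F_p$-subspace, so $U$ is a subspace of $V$.

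Next, for an arbitrary $g=(v,A)\in G$ and $u\in U$ I would compute the conjugate $g\,(u,I)\,g^{-1}$. Using $g^{-1}=(-A^{-1}v,A^{-1})$ and applying the multiplication rule twice, this collapses to $(v+Au,A)(-A^{-1}v,A^{-1})=(Au,I)$. Since $g\in G$ and $(u,I)\in G$, closure of the subgroup $G$ forces $(Au,I)\in G$, i.e.\ $(Au,I)\in\Tr(G)$. As $u$ and $g$ were arbitrary, this shows $\Tr(G)\lhd G$. Reading the same conclusion through $\iota$, it says $Au\in U$ for every $u\in U$ and every $(v,A)\in G$. But the automorphism part $\theta_g$ of $g=(v,A)$ is precisely $A$, so this is exactly the statement that $U$ is a $G$-invariant subgroup in the sense of Definition \ref{ad-invt}(ii) — equivalently, that $U$ is an $\F_p[G]$-submodule of $V$ for the linear action.

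There is no genuine obstacle in this proposition: everything reduces to the one-line identity $(v,A)(u,I)(v,A)^{-1}=(Au,I)$. The only points requiring any care are keeping the semidirect-product bookkeeping straight when inverting and multiplying, and invoking closure of $G$ at the right moment so that the conjugate $(Au,I)$ is seen to lie in $G$ (and hence in $\Tr(G)$) rather than merely in $\Aff(V)$.
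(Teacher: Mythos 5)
Your proof is correct and follows essentially the same route as the paper: the heart of both arguments is the identity $(v,A)(u,I)(v,A)^{-1}=(Au,I)$, from which $G$-invariance of $U$ follows. The only cosmetic difference is that the paper gets normality of $\Tr(G)$ for free as the kernel of $G\hookrightarrow \Aff(V)\onto \Aut(V)$, whereas you read it off from the same conjugation computation; both are fine.
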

\begin{proof}
$\Tr(G)$ is normal in $G$ since it is the kernel of the composite homomorphism
$G \hookrightarrow \Aff(V) \onto \Aut(V)$. Clearly $U$ is a subspace of $V$, and the map $(v,I) \mapsto v$ gives an isomorphism of groups $\Tr(G) \to U$. 
Moreover, for $u \in U$ and $g=(v,A) \in G$, we calculate
\begin{eqnarray*}
    (v,A)(u,I)(v,A)^{-1} & = & (v+Au,A)(-A^{-1}v, A^{-1})  \\
                                     & = & (v+Au-v,I) \\
                                     & = & (Au,I). 
\end{eqnarray*}                                    
Thus $Au \in U$, so $U$ is a $G$-invariant subspace. Finally, if $U=V$ then for each $g=(v,A) \in G$ we have $(v,I) \in G$ and so $(0_V,A)=(v,I)^{-1}g \in G'$, and the homomorphism $G \to \Aut(V)$ has image $G'$.
\end{proof}

\subsection{Direct sums}

\begin{proposition} \label{dir-sum}
Suppose that $G \leq \Aff(V)$ admits a direct product decomposition $G=A \times B$, and $V$ decomposes as a direct sum
$V=U \oplus W$ of $\F_p[G]$-modules where $U$ is an $\F_p[A]$-module on which $B$ acts trivially, and $W$ is an 
$\F_p[B]$-module on which $A$ acts trivially.
Moreover suppose that the $\F_p[A]$-module $U$ is fixed-point free in the sense that if $u \in U$ and $\theta_\alpha(u)=u$ for all $\alpha=(v_\alpha, \theta_\alpha) \in A$ then $u=0$,
and suppose similarly that $W$ is a fixed-point free $\F_p[B]$-module. Then $A \cdot 0_V \subseteq U$ and $B \cdot 0_V \subseteq W$, so that 
$G \subseteq \Aff(U) \times \Aff(W)$. 
\end{proposition}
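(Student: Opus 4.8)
The plan is to analyze the affine action of a single element $\alpha = (v_\alpha, \theta_\alpha) \in A$ on the zero vector, and show its image lies in $U$. Write $0_V = 0_U \oplus 0_W$ under $V = U \oplus W$, and decompose $v_\alpha = u_\alpha \oplus w_\alpha$ accordingly. Since $\alpha \cdot 0_V = v_\alpha$, I want to show $w_\alpha = 0$; by symmetry the corresponding statement for $B$ will follow. The key observation is that $B$ commutes with $\alpha$ (as $G = A \times B$), $B$ acts trivially on $U$, and $B$ acts on $W$ fixed-point-freely; these three facts together should pin down $w_\alpha$.

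First I would fix an arbitrary $\beta = (v_\beta, \theta_\beta) \in B$ and compute the commutator condition $\alpha\beta = \beta\alpha$ in $\Aff(V)$ using $(v,\theta)(v',\theta') = (v + \theta(v'), \theta\theta')$. Equating the translation parts gives $v_\alpha + \theta_\alpha(v_\beta) = v_\beta + \theta_\beta(v_\alpha)$. Now project this identity onto the summand $W$: since $U$ and $W$ are $\F_p[G]$-submodules, $\theta_\alpha$ and $\theta_\beta$ preserve the decomposition, and on $W$ the map $\theta_\alpha$ acts trivially (as $A$ acts trivially on $W$). So the $W$-component of the identity reads $w_\alpha + w_\beta = w_\beta + \theta_\beta(w_\alpha)$, i.e.\ $\theta_\beta(w_\alpha) = w_\alpha$. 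Since this holds for every $\beta \in B$ and $W$ is a fixed-point-free $\F_p[B]$-module, we conclude $w_\alpha = 0$, hence $\alpha \cdot 0_V = v_\alpha = u_\alpha \in U$.

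Thus $A \cdot 0_V \subseteq U$, and the identical argument with the roles of $A$, $B$, $U$, $W$ interchanged gives $B \cdot 0_V \subseteq W$. For the final assertion, each $\alpha \in A$ then acts on $V = U \oplus W$ by $v \mapsto v_\alpha + \theta_\alpha(v)$ with $v_\alpha \in U$ and $\theta_\alpha$ preserving both summands, so $\alpha$ lies in $\Aff(U) \times \Aff(W)$ (acting affinely on $U$ and linearly on $W$); similarly every $\beta \in B$ lies in $\Aff(U) \times \Aff(W)$. Since $G = A \times B$ is generated by $A \cup B$, we get $G \subseteq \Aff(U) \times \Aff(W)$, which is presumably what is intended by the displayed conclusion (the ``$V$'' in the statement should read ``$G$''). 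I do not anticipate a serious obstacle here: the whole argument is a short commutator computation followed by invoking the fixed-point-free hypothesis; the only mild subtlety is being careful that the module decomposition $V = U \oplus W$ is respected by all the relevant linear maps, which is exactly what the hypothesis that $U$ and $W$ are $\F_p[G]$-submodules guarantees.
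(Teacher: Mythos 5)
Your proposal is correct and follows essentially the same argument as the paper: equate the translation parts of $\alpha\beta=\beta\alpha$, project onto the relevant summand, and invoke the fixed-point-free hypothesis (the paper writes this with explicit block matrices, you do it coordinate-free, but the computation is identical). Your remark that the displayed conclusion should read $G \subseteq \Aff(U)\times\Aff(W)$ rather than $V \subseteq \Aff(U)\times\Aff(W)$ correctly identifies a typo in the statement.
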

\begin{proof}
Choosing bases of $U$ and $W$, we may write elements $\alpha$ of $A$ and $\beta$ of $B$ as block matrices
$$ \alpha = \left( \begin{array}{cc|c} 
      M & 0 & u \\  0 & I & w \\ \hline 0 & 0 & 1  \end{array} \right), \qquad 
       \beta = \left( \begin{array}{cc|c} 
  I & 0 & u' \\  0 & N & w' \\ \hline 0 & 0 & 1  \end{array} \right) $$
where $u$, $u' \in U$ and $w$, $w' \in W$ and the matrices $M$, $N$ correspond to $\theta_\alpha$, $\theta_\beta$. We need to show that $w=0$ and $u'=0$.
Now as $\alpha \beta = \beta \alpha$, we have $Mu'+u  = u+u'$, $w'+w=Nw+w'$. The first of these simplifies to $Mu'=u'$.
Keeping $\beta$ fixed and letting $\alpha \in A$ vary, we have $\theta_\alpha(u')=u'$ for all $\alpha$ in $A$. Thus $u'=0$ since $U$ is fixed-point free.
Similarly, $w=0$.
\end{proof}

\subsection{Tensor products}

\begin{proposition} \label{tensor-product}
Suppose that $G \leq \Aff(V)$ admits a direct product decomposition $G=A \times B$, and that the $\F_p[G]$-module
$V$ decomposes as a tensor product $V=U \otimes_{\F_p} W$ for some $\F_p[A]$-module $U$ and some $\F_p[B]$-module $W$.
Suppose further that $A$ contains an element $\alpha=(0_V, \theta_\alpha)$ fixing $0_V$ such that the endomorphism  
$\theta_\alpha - \id$ of $U$ is invertible. Then $B \cdot 0_V= \{ 0_V\}$.
\end{proposition}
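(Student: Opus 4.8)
The plan is to mirror the proof of Proposition \ref{dir-sum}: since $A$ and $B$ commute elementwise inside $\Aff(V)$, commuting an arbitrary $b \in B$ past the distinguished element $\alpha$ will pin down the translation part of $b$.

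First I would make the tensor decomposition explicit. With $V = U \otimes_{\F_p} W$ and $G = A \times B$ acting so that $A$ operates on the first factor (trivially on $W$) and $B$ on the second (trivially on $U$), the element $\alpha = (0_V, \theta_\alpha) \in A$ has linear part $\theta_\alpha = \sigma_\alpha \otimes \id_W$ for some $\sigma_\alpha \in \GL(U)$ with $\sigma_\alpha - \id_U$ invertible, while an arbitrary $b \in B$ may be written $b = (\beta_b, \theta_b)$ with $\beta_b = b \cdot 0_V \in V$ and $\theta_b = \id_U \otimes \rho_b$ for some $\rho_b \in \GL(W)$. The aim is to prove $\beta_b = 0_V$ for every $b \in B$.

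Next I would use the relation $\alpha b = b \alpha$. Expanding both sides with (\ref{hol-mult}) gives $\alpha b = (\theta_\alpha(\beta_b),\, \theta_\alpha \theta_b)$ and $b \alpha = (\beta_b,\, \theta_b \theta_\alpha)$; equating first components yields $\theta_\alpha(\beta_b) = \beta_b$, that is, $(\theta_\alpha - \id_V)(\beta_b) = 0_V$. Since $\theta_\alpha - \id_V = (\sigma_\alpha - \id_U) \otimes \id_W$ and $\sigma_\alpha - \id_U \in \GL(U)$, the operator $\theta_\alpha - \id_V$ is invertible on $V$, with inverse $(\sigma_\alpha - \id_U)^{-1} \otimes \id_W$. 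Hence $\beta_b = 0_V$, and as $b$ was arbitrary, $B \cdot 0_V = \{0_V\}$.

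There is essentially no obstacle here: the one point requiring care is upgrading ``$\theta_\alpha - \id$ invertible on $U$'' to ``$\theta_\alpha - \id_V$ invertible on $V$'', which is immediate from the tensor structure of the $G$-action; everything else is the same one-line commutator computation in $\Aff(V)$ used in the proof of Proposition \ref{dir-sum}.
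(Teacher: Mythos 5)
Your proof is correct and follows essentially the same route as the paper: commute the distinguished $\alpha=(0_V,\theta_\alpha)$ with an arbitrary $b\in B$ in $\Aff(V)$ to get $(\theta_\alpha-\id_V)(b\cdot 0_V)=0_V$, then use that $\theta_\alpha-\id_V=(\sigma_\alpha-\id_U)\otimes\id_W$ is invertible. The paper merely takes a slightly longer path, first deriving the general coordinate identity $(M-I)Y=X(N^t-I)$ for arbitrary $\alpha$, $\beta$ before specialising to $X=0$; your direct substitution is a clean shortcut of the same argument.
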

\begin{proof}
Let $(e_j)_{1 \leq j \leq k}$ and $(f_s)_{1 \leq s \leq t}$ be bases for $U$ and $W$.
Then an arbitrary element $\alpha \in A$ acts on the $\F_p[A]$-module $U$ via a 
matrix $M=(m_{ij})$, so that $\alpha(e_j) = Me_j = \sum_i m_{ij} e_i$. Similarly, $\beta \in B$ acts on $W$ via a matrix $N=(n_{rs})$. 
Then $V=U\otimes W$ has basis $(e_j \otimes f_s)_{1 \leq j \leq k, 1 \leq s \leq t}$ and the element $(\alpha,\beta) \in A \times B =G$ acts on the $\F_p[G]$-module $V$  
via the matrix $M \otimes N$: we have $(\alpha,\beta) (e_j \otimes f_s) = (M\otimes N) (e_j \otimes f_s) = \sum_{i,r} m_{ij} n_{rs} e_i \otimes f_r$. 

Now write $\alpha \cdot 0_V = x = \sum_{j, s} x_{js} e_j \otimes f_s$ and $\beta \cdot 0_V = y = \sum_{j, s} y_{js} e_j \otimes f_s$, so that 
$\alpha = (x, M \otimes I)$ and $\beta = (y, I \otimes N)$ in $\Aff(V)=V \rtimes \Aut(V)$.  
Since these two elements must commute, we have
$$   x+(M \otimes I)y = y+(I \otimes N)x. $$
Thus
$$ \sum_{j, s} x_{j,s} e_j \otimes f_s + 
    \sum_{j,s} y_{js} \sum_i m_{ij} e_i \otimes f_s
     = \sum_{j, s} y_{j,s} e_j \otimes f_s + 
         \sum_{j,s} x_{js} \sum_r n_{rs} e_j \otimes f_r.  $$
It follows that, for each pair $j$, $s$, we have
$$ x_{js} + \sum_h m_{jh} y_{hs} = y_{js} + \sum_q  x_{jq} n_{sq}. $$
Rearranging the coefficients of the vectors $x$ and $y$ into matrices 
$X=(x_{js})$ and $Y=(y_{js})$, this gives the matrix equation
$$   (M-I) Y = X(N^t-I), $$
where $N^t$ is the transpose of $N$. If we now choose $\alpha$ as in the statement, so $x=0_V$ and $\theta_\alpha - \id$ is invertible,
then $X=0$ and the matrix $M-I$ is invertible. It follows that $Y=0$. Thus $\beta \cdot 0_V = 0_V$. As $\beta \in B$ was arbitrary, $B \cdot 0_V= \{ 0_V\}$.
\end{proof}

\section{Subgroups with soluble stabilisers of $p$-power index} \label{subgps-affine}

We continue to work in the affine group of $V=\F_p^n$, and now consider subgroups $G$ of $\Aff(V)$ satisfying the following hypothesis:

\begin{hypothesis} \label{p-sol-stab}
For each $v \in V$, the stabiliser of $v$ in $G$ is a soluble subgroup of $p$-power index.
\end{hypothesis}

Hypothesis \ref{p-sol-stab} means that each orbit of $G$ on $V$ has $p$-power cardinality. These orbits need not all have the same cardinality. If $(G,V)$ is a solution to Question \ref{main-qn} then Hypothesis \ref{p-sol-stab} holds since $G$ is transitive on $V$ and each point stabiliser is a soluble subgroup of index $|V|=p^n$.
  
\subsection{A restriction on composition factors}

We will use the following simple observation about permutation groups.

\begin{proposition}  \label{p-normal-orb}
Let $H$ be a group acting faithfully and transitively on a set $X$ of prime-power cardinality $p^r>1$. Let $\{1\} \neq J \lhd H$. Then there exist $s \geq 0$ and $t \geq 1$ such that $J$ has $p^s$ orbits on $X$, each of these orbits has cardinality $p^t$, and $H/J$ permutes these orbits transitively.
\end{proposition}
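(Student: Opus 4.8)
The plan is to exploit the standard fact that the orbits of a normal subgroup are permuted transitively by the ambient group, together with a counting argument forced by the prime-power hypothesis. First I would recall that, since $J \lhd H$ and $H$ acts transitively on $X$, the group $H$ permutes the set of $J$-orbits on $X$ transitively: if $h \in H$ and $\mathcal{O}$ is a $J$-orbit then $h\mathcal{O}$ is again a $J$-orbit (because $h J h^{-1} = J$), and transitivity of $H$ on $X$ immediately gives transitivity of $H$ on the $J$-orbits. In particular all $J$-orbits have the same cardinality, say $m$; call this cardinality $p^t$ once we know it is a power of $p$. If there are $\ell$ such orbits, then $|X| = \ell \cdot m = p^r$, so both $\ell$ and $m$ are powers of $p$; write $\ell = p^s$ and $m = p^t$ with $s \geq 0$ and $t \geq 0$. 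Finally, $H/J$ acts on the set of $J$-orbits (the kernel of the $H$-action on orbits contains $J$), and this induced action is transitive by the remark above, giving the last assertion.

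The only point needing a little care is that $t \geq 1$, i.e.\ that the $J$-orbits are non-trivial. If some (hence every) $J$-orbit were a singleton, then $J$ would fix every point of $X$, contradicting the faithfulness of the $H$-action together with $J \neq \{1\}$. Hence $p^t = m > 1$, so $t \geq 1$, as required.

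I do not expect any genuine obstacle here: the statement is essentially the orbit-counting consequence of normality for a transitive action, and the prime-power hypothesis on $|X|$ does the rest. The only thing to state explicitly is that faithfulness plus $J \neq \{1\}$ rules out $t = 0$; everything else is the observation that a normal subgroup's orbits form a block system permuted transitively by the quotient.
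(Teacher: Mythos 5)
Your proof is correct and follows essentially the same route as the paper: both establish that the $J$-orbits form a block system of common size (you via $H$ permuting the orbits transitively, the paper via conjugating stabilisers — two sides of the same observation), deduce from $|X|=p^r$ that the number and size of orbits are powers of $p$, and use faithfulness together with $J\neq\{1\}$ to rule out singleton orbits. No gaps.
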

\begin{proof}
Let $H_x$ be the stabiliser in $H$ of $x \in X$. For any $h \in H$, the stabiliser of $h \cdot x$ in $J$ is $h H_x h^{-1} \cap J = h(H_x \cap J)h^{-1}$.  The order of this stabiliser is independent of $h$, so the $J$-orbits on $X
$ all have the same cardinality. Since these orbits form a partition of $X$, the number of orbits divides $|X|$. Thus, for some $s \geq 0$, there are $p^s$ orbits, and each has cardinality $p^t$ where $t=r-s$. As $|J|>1$ and $J$ acts faithfully on $X$, we have $t \geq 1$. Also $H/J$ has a well-defined action on 
the $p^s$ orbits $J \cdot x$ since $h \cdot (J \cdot x) = J \cdot (h \cdot x)$ as $J \lhd H$. This action is transitive since $H$ is transitive on $X$. 
\end{proof}

\begin{lemma} \label{comp-factors}
Let $(G,V)$ satisfy Hypothesis \ref{p-sol-stab}, and let 
\begin{equation}  \label{comp-series}
   \{1\} =G_0 \triangleleft G_1 \triangleleft \cdots \triangleleft  G_m=G 
\end{equation}
be any composition series for $G$. Then the following hold for $1 \leq i \leq m$:
\begin{itemize}
\item[(i)] the pair $(G_i,V)$ also satisfies Hypothesis \ref{p-sol-stab};
\item[(ii)] the simple group $G_i/G_{i-1}$ has a soluble subgroup of index $p^s$ for some $s \geq 0$.
\end{itemize}
\end{lemma}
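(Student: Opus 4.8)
The plan is to establish (i) by a downward induction along the composition series, and then to deduce (ii) from (i) by analysing how $G_i$ acts on the set of $G_{i-1}$-orbits on $V$. Throughout I will use freely that any subgroup of $\Aff(V)$, in particular each $G_i$, acts faithfully on $V$, and that for a group acting on a set the index of a point stabiliser equals the size of the corresponding orbit.

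For (i): the stabiliser of $v$ in $G_i$ is contained in the stabiliser of $v$ in $G$, hence soluble, so the only point at issue is that each $G_i$-orbit on $V$ should have $p$-power cardinality. The base case $i=m$ is the hypothesis on $(G,V)$. For the inductive step, assume $(G_i,V)$ satisfies Hypothesis \ref{p-sol-stab}, and let $\mathcal{O}$ be a $G_{i-1}$-orbit; it lies inside a single $G_i$-orbit $\widetilde{\mathcal{O}}$ of cardinality $p^r$. If $r=0$ then $\mathcal{O}$ is a singleton; if $r\geq 1$, pass to the quotient of $G_i$ by the kernel of its action on $\widetilde{\mathcal{O}}$ and apply Proposition \ref{p-normal-orb} to the image of $G_{i-1}$ (a normal subgroup of that quotient): either this image is trivial, so $G_{i-1}$ acts trivially on $\widetilde{\mathcal{O}}$ and $\mathcal{O}$ is a singleton, or it is non-trivial and all $G_{i-1}$-orbits in $\widetilde{\mathcal{O}}$ have a common $p$-power size. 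In all cases $|\mathcal{O}|$ is a power of $p$, so $(G_{i-1},V)$ satisfies Hypothesis \ref{p-sol-stab}, completing the induction.

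For (ii): if $G_i/G_{i-1}$ is abelian it is cyclic of prime order, and the whole group serves as a soluble subgroup of index $p^0=1$; so assume $G_i/G_{i-1}$ is non-abelian simple. Let $\Omega$ be the set of $G_{i-1}$-orbits on $V$. Since $G_{i-1}\lhd G_i$, the group $G_i$ permutes $\Omega$, with kernel $K\supseteq G_{i-1}$. If $K=G_i$, then $G_i$ and $G_{i-1}$ have exactly the same orbits on $V$; for $v\in V$ both are transitive on that common orbit, so $G_i=G_{i-1}\,(G_i)_v$ and $G_i/G_{i-1}$ is a quotient of the soluble group $(G_i)_v$ — impossible. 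Hence $K=G_{i-1}$ and $G_i/G_{i-1}$ acts faithfully on $\Omega$; being faithful and non-trivial, it acts non-trivially on some $G_i$-orbit $\Omega_0\subseteq\Omega$, and simplicity then forces this action to be faithful and transitive with $|\Omega_0|>1$. Now $\Omega_0$ is the set of $G_{i-1}$-orbits contained in a single $G_i$-orbit $\mathcal{O}$ on $V$; by (i), $|\mathcal{O}|$ is a power of $p$, and (by the conjugation argument in Proposition \ref{p-normal-orb}) the $G_{i-1}$-orbits in $\mathcal{O}$ all share a common $p$-power size, so $|\Omega_0|$ is a positive power of $p$. Finally, for $v$ in the relevant $G_{i-1}$-orbit the setwise stabiliser in $G_i$ of that point of $\Omega_0$ is $G_{i-1}\,(G_i)_v$, whose image in $G_i/G_{i-1}$ is a quotient of the soluble group $(G_i)_v$; this is the required soluble subgroup of $G_i/G_{i-1}$, of index $|\Omega_0|=p^s$ with $s\geq 1$.

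The step I expect to be the main obstacle is the dichotomy in (ii) on the kernel $K$ of the action of $G_i$ on $\Omega$: one must recognise that the degenerate alternative $K=G_i$ says precisely that $G_i/G_{i-1}$ is a quotient of a point stabiliser, which is impossible for a non-abelian simple group, and one must then choose the right $G_i$-orbit in $\Omega$ and track its size. Everything else is bookkeeping with orbit cardinalities, for which part (i) and Proposition \ref{p-normal-orb} suffice.
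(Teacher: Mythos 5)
Your proof is correct and follows essentially the same route as the paper: downward induction via Proposition \ref{p-normal-orb} for (i), and for (ii) the identification of the soluble subgroup of $G_i/G_{i-1}$ as the image of $G_{i-1}(G_i)_v$, whose index is the ($p$-power) number of $G_{i-1}$-orbits inside a $G_i$-orbit. If anything, your version is slightly more careful than the paper's, which applies Proposition \ref{p-normal-orb} with $H=G_i$, $J=G_{i-1}$ without explicitly checking the faithfulness and non-triviality hypotheses; this is harmless there because only the conclusions that survive without those hypotheses are used, but your explicit handling of the degenerate cases (trivial image of $G_{i-1}$, kernel equal to $G_i$) is welcome.
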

\begin{proof}
We show (i) by descending induction on $i$. If $i=m$ then Hypothesis \ref{p-sol-stab} holds by assumption.  Suppose that $i>1$ and $(G_i,V)$ satisfies Hypothesis \ref{p-sol-stab}, and let $v \in V$. Then the stabiliser $G'_i(v)$ of $v$ in $G_i$ is soluble of $p$-power index, so the orbit $G_i \cdot v$ has $p$-power cardinality. Then $G_{i-1} \cdot v$ also has $p$-power cardinality by 
Proposition \ref{p-normal-orb} with $X=G_i \cdot v$, $H=G_i$, $J=G_{i-1}$. Moreover the stabiliser $G'_{i-1}(v)$ of $v$ in $G_{i-1}$ is soluble, being a subgroup of $G'_i(v)$. Hence $(G_{i-1},V)$ also satisfies Hypothesis \ref{p-sol-stab}. This completes the induction.
	
For (ii), we apply Proposition \ref{p-normal-orb} with $X=G_i \cdot 0_V$, $H=G_i$, $J=G_{i-1}$. Let $G'$ be the stabiliser of $0_V$ in $G$. For some $s \geq 0$, the set $X$ decomposes into $p^s$ orbits under $G_{i-1}$, and $G_i/G_{i-1}$ acts transitively on these orbits. The stabiliser of the orbit $G_{i-1} \cdot 0_V$ in this action is $G_{i-1} G'_i/G_{i-1}$, where $G'_i=G' \cap G_i$ is the stabiliser of $0_V$ in $G_i$. Since $G'$ is soluble by hypothesis, the groups $G'_i$ and 
$G_{i-1}G'_i/G_{i-1} \cong G'_i/(G'_i \cap G_{i-1})$ are also soluble. Thus $G_i/G_{i-1}$ contains the soluble subgroup $G_{i-1}G'_i/G_{i-1}$ of index $p^s$.
\end{proof}

\subsection{Subnormal subgroups in the insoluble irreducible case}

\begin{lemma} \label{nilp}
Let $H$ be a finite group, and let $P$ be a normal $p$-subgroup of $H$. Then the kernel $\NN$ of the canonical ring homomorphism $\F_p[H] \to \F_p[H/P]$ is a nilpotent ideal of $\F_p[H]$.
\end{lemma}
\begin{proof}
Let $F$ be the Frattini subgroup of $P$. Then $F \lhd H$, and $P/F$ is an elementary abelian $p$-group. Moreover, $F \lneq P$ unless $P$ is trivial.

Let $\MM$ be the kernel of the canonical homomorphism $\F_p[H] \to \F_p[H/F]$. It suffices by induction to show that $\NN^k \subseteq \MM$ for some $k>0$. Replacing $H$ and $P$ by $H/F$ and $P/F$, we may therefore assume that $P$ is elementary abelian.

Now $\NN$ is the subspace of $\F_p[H]$ spanned by all elements of the form $h(\pi-1)$ for $h \in H$ and $\pi \in P$; this subspace is visibly a left ideal, and it is a right ideal since
\begin{equation} \label{rt-ideal}   
 h_1(\pi-1)h_2 = h_1 h_2 (\pi'-1) \mbox{ with } \pi'=h_2^{-1} \pi h_2 \in P. 
\end{equation}
For $k>0$, we can use (\ref{rt-ideal}) repeatedly to write the product of any $k$
elements of the form $h(\pi-1)$ as
\begin{equation} 
	h^* (\pi_1-1) (\pi_2-1) \cdots (\pi_k-1)
\end{equation}
with $h^* \in H$ and $\pi_1, \ldots, \pi_k \in P$. Moreover, since $P$ is abelian, we may reorder the factors $\pi_i-1$. Taking $k=(p-1)|P|+1$, the product can therefore be rewritten to contain $(\pi-1)^p$ for some $\pi \in P$. Since 
$$ (\pi-1)^p=\pi^p-1=0,   $$ 
this shows that $\NN^k=\{0\}$.
\end{proof}

\begin{lemma} \label{p-normal}
Suppose that $G$ is insoluble and satisfies Hypothesis \ref{p-sol-stab}, and that the pair $(G,V)$ is irreducible. Then $G$ has no non-trivial normal $p$-subgroup.
\end{lemma}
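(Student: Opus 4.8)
The plan is to argue by contradiction. Suppose $R \neq \{1\}$ is a normal $p$-subgroup of $G$. Since $G \leq \Aff(V) = V \rtimes \GL_n(p)$, the subgroup $\Tr(G)$ of translations is normal in $G$ (Proposition \ref{trans}), and it corresponds to a $G$-invariant subspace $U$ of $V$. By irreducibility, either $U = \{0_V\}$ or $U = V$. If $U = V$, then $G \supseteq \Tr(G) = V$ acts regularly-by-translations, forcing the point stabiliser $G'$ to be a complement; but then $G = V \rtimes G'$ with $G'$ soluble and $V$ abelian, so $G$ is soluble, contradicting insolubility. Hence $\Tr(G) = \{1\}$, i.e. $G$ contains no non-trivial translation, so the projection $G \to \GL_n(p)$ is injective and we may regard $G$ as a subgroup of $\GL_n(p)$ (equivalently, the affine and linear actions of $G$ essentially agree, except that the affine action of $G$ on $V$ has more than one orbit while still being transitive — wait, that is impossible unless $G$ has translations).

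Let me restart the core argument more carefully. With $\Tr(G) = \{1\}$ shown as above, write $R$ for a putative non-trivial normal $p$-subgroup of $G$. First I would locate a non-zero fixed space: the subspace $V^R = \{v \in V : \theta_g(v) = v \ \forall g \in R\}$ of vectors fixed by $R$ in the \emph{linear} action is non-zero, because $R$ is a $p$-group acting $\F_p$-linearly on the non-zero $\F_p$-space $V$ (a standard fact: a $p$-group acting on a non-zero vector space over $\F_p$ has a non-zero fixed vector). Since $R \lhd G$, the subspace $V^R$ is $G$-invariant: if $v \in V^R$ and $g \in G$, then for $h \in R$ we have $\theta_h \theta_g (v) = \theta_g \theta_{g^{-1}hg}(v) = \theta_g(v)$ because $g^{-1}hg \in R$. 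By irreducibility, $V^R = V$, i.e. $R$ acts trivially in the linear action. But $\Tr(G) = \{1\}$ means the only element of $G$ acting trivially in the linear action is the identity (an element $(v, I) \in G$ with trivial linear part is a translation). Hence $R = \{1\}$, the desired contradiction.

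So the key steps, in order, are: (1) use Proposition \ref{trans} plus irreducibility to conclude $\Tr(G)$ is either $\{1\}$ or all of $V$; (2) rule out $\Tr(G) = V$ using that the point stabiliser is soluble (it would be a soluble complement to an abelian normal subgroup, making $G$ soluble); (3) deduce $\Tr(G) = \{1\}$, so no non-identity element of $G$ has trivial linear part; (4) given a normal $p$-subgroup $R$, show $V^R \neq \{0_V\}$ by the fixed-point lemma for $p$-groups over $\F_p$; (5) show $V^R$ is $G$-invariant using normality of $R$; (6) invoke irreducibility to get $V^R = V$, i.e. $R$ acts trivially linearly; (7) combine with step (3) to force $R = \{1\}$.

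The only step requiring any real care is step (2): one must check that when $\Tr(G) = V$, the group $G$ is genuinely soluble. This is clean — $V$ is an elementary abelian normal subgroup of $G$, $G/V$ embeds in $\GL_n(p)$ and equals the image of the point stabiliser $G'$ (since $V$ acts transitively by translation, $G = V G'$), and $G' \cong G/V$ is soluble by Hypothesis \ref{p-sol-stab}, hence $G$ is soluble, contradicting insolubility. Everything else is standard group theory, so I do not anticipate a substantive obstacle; the proof is short. Note that this lemma does not need the Classification of Finite Simple Groups, consistent with its placement in \S\ref{subgps-affine}.
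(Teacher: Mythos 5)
Your proof is correct and follows essentially the same strategy as the paper's: show that a non-trivial normal $p$-subgroup must act trivially in the linear action (hence consist of translations), invoke irreducibility, and derive the contradiction from the structure of $\Tr(G)$ together with the solubility of the point stabiliser. The only difference is cosmetic — you get the trivial linear action from the non-vanishing of the fixed space $V^R$ (the standard fixed-point lemma for $p$-groups over $\F_p$), while the paper applies Nakayama's lemma to $\NN V$ for $\NN$ the nilpotent kernel of $\F_p[G] \to \F_p[G/P]$; these are dual routes to the same conclusion, and the muddled aside in your first paragraph is harmless since your restarted argument is clean.
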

\begin{proof}
Let $P$ be a non-trivial normal $p$-subgroup of $G$, let $\JJ$ be the Jacobson radical of $\F_p[G]$, and let $\NN$ be the kernel of the canonical ring homomorphism $\F_p[G] \to \F_p[G/P]$.
Then $\NN \subseteq \JJ$ since $\NN$ is nilpotent by Lemma \ref{nilp}. Let $W$ be the subspace $\NN V$ of $V$. Then $W$ is a $G$-invariant subspace since $\NN$ is an ideal. 
Now if $W=V$ we have
$$   V = W = \NN V \subseteq \JJ V,  $$
so by Nakayama's Lemma (applied to the finitely generated $\F_p[G]$-module $V$), we have $V=\{0_V\}$, contradicting the hypotheses. Since $V$ is irreducible, we therefore have $W=\{0_V\} \neq V$.

For each $\pi=(u_\pi,A_\pi) \in P$ we have $\pi -1 \in \NN$, so $(A_\pi-I)v=0_V$ for all $v \in V$. Hence $P$ consists of translations. Then $\Tr(G)$ is non-trivial, and by Proposition \ref{trans}, $U = \{ u \in V: (u,I) \in G \}$ is a non-trivial $G$-invariant subspace of $V$. Using the irreducibility of $V$ again, it follows that $U=V$. 
By Proposition \ref{trans} again, $G/\Tr(G)$ is isomorphic to the stabiliser $G'$ of $0_V$. This is a contradiction since $G$ is insoluble but $G'$ and $\Tr(G)$ are soluble. 
\end{proof} 

\begin{corollary}  \label{no-sol-subnormal}
If $(G,V)$ is as in Lemma \ref{p-normal}, then $G$ has no non-trivial soluble subnormal subgroup.
\end{corollary}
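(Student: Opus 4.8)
The plan is to reduce the statement to showing that $G$ has no non-trivial normal $q$-subgroup, for \emph{every} prime $q$. This suffices because, from any non-trivial soluble subnormal subgroup of $G$, one can extract a non-trivial subnormal $q$-subgroup for some prime $q$, and any subnormal $q$-subgroup lies in $O_q(G)$, the largest normal $q$-subgroup of $G$.

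I would first carry out this reduction. Suppose $S \leq G$ is a non-trivial soluble subnormal subgroup. The last non-trivial term $A$ of the derived series of $S$ is abelian and characteristic in $S$, hence normal in $S$; since subnormality is transitive (one simply concatenates subnormal chains), $A$ is subnormal in $G$, and replacing $S$ by $A$ we may assume $S$ is abelian. Choosing a prime $q \mid |S|$ and replacing $S$ by its (unique, hence characteristic) Sylow $q$-subgroup, we may assume $S$ is a non-trivial $q$-group, still subnormal in $G$. A short induction on the subnormal defect now gives $S \leq O_q(G)$: if $S \triangleleft\triangleleft M \triangleleft G$, then $S \leq O_q(M)$ by induction, and $O_q(M)$ is a characteristic $q$-subgroup of $M \triangleleft G$, hence a normal $q$-subgroup of $G$, so $O_q(M) \leq O_q(G)$. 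Thus $O_q(G) \neq \{1\}$, and it remains to contradict this for each prime $q$.

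For $q = p$ this is precisely Lemma \ref{p-normal}. For $q \neq p$, set $R = O_q(G) \triangleleft G$ and take any $v \in V$. By Hypothesis \ref{p-sol-stab} the $G$-orbit $G \cdot v$ has $p$-power cardinality; since $R$ is normal in $G$, its orbits inside $G \cdot v$ all have equal order (the stabilisers $R \cap G_{g \cdot v}$ are conjugate in $G$, hence of equal order), and this common order divides $|G \cdot v|$, so it is a power of $p$. But $R$ is a $q$-group, so each $R$-orbit also has $q$-power cardinality. As $q \neq p$, every $R$-orbit on $V$ is a singleton, so $R$ fixes $V$ pointwise; since the affine action of $\Aff(V)$ on $V$ is faithful, $R = \{1\}$, a contradiction. (Alternatively, the $p$-power orbit claim follows from Proposition \ref{p-normal-orb}.)

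I do not anticipate a genuine obstacle here: the real content is carried by Lemma \ref{p-normal}, and the only point needing a small self-contained argument is the passage from a subnormal $q$-subgroup to the normal subgroup $O_q(G)$, handled by the induction on subnormal defect above; the case $q \neq p$ is a routine orbit-counting argument using only Hypothesis \ref{p-sol-stab} and the faithfulness of the affine action.
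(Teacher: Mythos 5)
Your proof is correct and rests on the same two pillars as the paper's: the standard fact that a subnormal $q$-subgroup lies in $O_q(G)$, plus the observation that $p$-power orbit cardinalities (Hypothesis \ref{p-sol-stab}) force a non-trivial normal $q$-subgroup to have $q=p$, at which point Lemma \ref{p-normal} gives the contradiction. The paper organises the reduction slightly differently — it refines the subnormal chain to a composition series and applies Lemma \ref{comp-factors}(i) to the bottom term $G_1$, a soluble simple group which must then have order $p$ — but this is the same argument in substance as your derived-series/Sylow reduction followed by orbit counting on $O_q(G)$.
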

\begin{proof}
Let $H$ be such a subgroup. Then there is a composition series (\ref{comp-series}) of $G$ with $G_i=H$ for some $i \geq 1$.  
By Lemma  \ref{comp-factors}(i), the group $G_1$ in this series acts faithfully on $V$ with orbits of $p$-power cardinality. Since $G_1$ is a soluble simple group, we have $|G_1|=p$.  By \cite[9.1.1]{Robinson}, $G$ contains a unique maximal normal $p$-subgroup $O_p(G)$, and $G_1 \subseteq O_p(G)$. Thus $O_p(G) \neq \{1\}$, contradicting Lemma \ref{p-normal}.
\end{proof}

\subsection{The socle of $G$}

Recall that for a finite group $H$, the socle $\soc(H)$ of $H$ is the subgroup generated by all the minimal normal subgroups. Then $\soc(G)$ can be written as the direct product of some of these minimal normal subgroups \cite[p.~87]{Robinson}. The next result describes a situation when all the minimal normal subgroups are needed.

\begin{proposition}  \label{socle}
Let $H$ be a finite group such that every minimal normal subgroup of $H$ has trivial centre. Then $\soc(H)$ is the direct product of all the minimal normal subgroups of $H$.
\end{proposition}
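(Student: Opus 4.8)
The plan is to prove directly that the subgroup $D = N_1 N_2 \cdots N_k$ generated by the (finitely many) minimal normal subgroups $N_1,\dots,N_k$ of $H$ is the internal direct product of the $N_i$. Here $D$ really is a subgroup of $H$, and equals $\soc(H)$, precisely because each $N_i \lhd H$; so the content is that this product is \emph{direct}.

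First I would record the standard observation that any two distinct minimal normal subgroups $N_i \neq N_j$ satisfy $N_i \cap N_j = \{1\}$: the intersection is normal in $H$ and contained in $N_i$, hence is $\{1\}$ or $N_i$ by minimality of $N_i$, and in the latter case $N_i \leq N_j$ would force $N_i = N_j$ by minimality of $N_j$. Consequently $[N_i, N_j] \leq N_i \cap N_j = \{1\}$, since both subgroups are normal, so distinct minimal normal subgroups of $H$ commute elementwise. In particular the multiplication map $(n_1,\dots,n_k) \mapsto n_1 \cdots n_k$ is a surjective group homomorphism $N_1 \times \cdots \times N_k \to D$.

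The crucial step is to show that $N_i \cap D_i = \{1\}$ for each $i$, where $D_i = \prod_{j \neq i} N_j$. Again $N_i \cap D_i$ is a normal subgroup of $H$ contained in $N_i$, so by minimality it is either $\{1\}$ or $N_i$; if it were $N_i$ then $N_i \leq D_i$, and since $N_i$ centralizes every $N_j$ with $j \neq i$ it centralizes $D_i$, hence centralizes itself, so that $N_i$ is abelian and $Z(N_i) = N_i \neq \{1\}$, contradicting the hypothesis. Having $N_i \cap D_i = \{1\}$ for all $i$ is exactly what makes the kernel of the multiplication map trivial, because an element of that kernel has its $i$-th coordinate in $N_i \cap D_i$; hence the map is an isomorphism and $\soc(H) = N_1 \times \cdots \times N_k$. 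I do not anticipate a genuine obstacle: the hypothesis on centres is used only in the one-line contradiction of this crucial step, and the remaining care is simply in noting that elementwise commutativity of the $N_i$ together with the intersection conditions $N_i \cap D_i = \{1\}$ really does give an internal direct product decomposition.
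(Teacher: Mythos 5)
Your proof is correct, and it takes a mildly but genuinely different route through the key step. The paper argues by induction on the number of factors: writing $K_{i-1}=J_1\times\cdots\times J_{i-1}$, it supposes a non-trivial $a=(a_1,\dots,a_{i-1})\in J_i\cap K_{i-1}$ with $a_1\neq 1$, commutes $a$ against elements $(b,1,\dots,1)$ with $b\in J_1$, and uses the triviality of $Z(J_1)$ to manufacture a non-trivial element of $J_1\cap J_i$, contradicting the distinctness of those two minimal normal subgroups. You instead work non-inductively with the full complement $D_i=\prod_{j\neq i}N_j$: if $N_i\leq D_i$ then, since $N_i$ centralises each $N_j$ with $j\neq i$ (because $[N_i,N_j]\leq N_i\cap N_j=\{1\}$), it centralises $D_i$ and hence itself, making $N_i$ abelian and contradicting $Z(N_i)=\{1\}$ directly. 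Both arguments use the same ingredients (pairwise trivial intersections, elementwise commutativity of distinct minimal normal subgroups, and the centre hypothesis), but yours applies the centre hypothesis globally to $N_i$ rather than pointwise to a single coordinate, and thereby avoids both the induction and the coordinatewise commutator computation; the paper's version instead localises the contradiction to a pairwise intersection $J_1\cap J_i$. Your kernel argument for the multiplication map $N_1\times\cdots\times N_k\to D$ is also complete as stated, since commutativity lets you move the $i$-th coordinate of a kernel element into $N_i\cap D_i$.
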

\begin{proof}
Let $J_1$, \ldots, $J_s$ be the distinct minimal normal subgroups of $H$. 
We will show by induction that, for $1 \leq i \leq s$, the subgroup $K_i$ of $H$ generated by $J_1$, \ldots, $J_i$ is the direct product $J_1 \times \cdots \times J_i$. This is clear for $i=1$. Assume that $i>1$ and that $K_{i-1}=J_1 \times \cdots \times J_{i-1}$. We claim that $J_i \cap K_{i-1} = \{1\}$. If $1 \neq a \in J_i \cap K_{i-1}$ we may write $a=(a_1, \ldots, a_{i-1})$ with each $a_h \in J_h$, and, without loss of generality, $a_1 \neq 1$. Taking the commutator with $(b,1, \ldots, 1)$ for arbitrary $b \in J_1$, we have $(b^{-1}a_1^{-1}ba_1,1, \ldots, 1) \in J_i \cap K_{i-1}$. Thus $b^{-1}a_1^{-1}ba_1 \in J_1 \cap J_i$ for all $b \in J_1$. As $a_1$ is not in the centre of $J_1$, we have $J_1 \cap J_i \neq \{1\}$. This is impossible since $J_1$ and $J_i$ are distinct minimal normal subgroups. Hence 
$J_i \cap K_{i-1} = \{1\}$ as claimed, and in particular $J_i$ centralises $K_{i-1}$. It follows that the subgroup of $H$ generated by the normal subgroups $K_{i-1}$ and $J_i$ is their direct product, that is, $K_i=J_1 \times \cdots \times J_i$. 
\end{proof}

\begin{proposition} \label{d-prod-simple}
Let $H=J_1 \times \cdots J_r$ be a finite group which is the direct product of nonabelian simple subgroups $J_i$, $1 \leq i \leq r$. Then the normal subgroups of $H$ are precisely the direct products of subsets of $\{J_1, \ldots, J_r\}$. 
\end{proposition}
\begin{proof}	
Clearly the direct product of a subset of $\{J_1, \ldots, J_r\}$ is a normal subgroup of $H$. For the converse, let $K \lhd H$ and let $K_i$ be the projection of $K$ to the direct factor $J_i$ in $H$. If suffices to show that, for each $i$, either $K_i=\{1\}$ or $K$ contains $J_i$. 
Suppose that $K_i \neq \{1\}$. Since $K_i \lhd J_i$ and $J$ is simple, we have $K_i = J_i$. Let $a$, $b \in J_i$ be arbitrary. As $a \in K_i$, there is some $\alpha=(a_1, \ldots, a_r)\in K$ with $a_i=a$. Let $\beta=(1, \ldots, b, 1, \ldots) \in K$, with $b$ in the $i$th factor. Then
$$ \beta^{-1} \alpha^{-1} \beta \alpha = ( 1, \ldots, b^{-1} a^{-1} b a, 1, \ldots) \in K.  $$
Since $a$, $b$ are arbitrary elements of $J_i$, it follows that the derived subgroup $[J_i,J_i]$ of $J_i$ is contained in $K$. But $[J_i,J_i]=J_i$ since $J_i$ is a non-abelian simple group. Hence $J_i \subseteq K$ as required.
\end{proof}

\begin{lemma}  \label{irred-soc}
Let $G$ be an insoluble subgroup of $\Aff(V)$ satisfying Hypothesis \ref{p-sol-stab}, and suppose that $(G,V)$ is irreducible. Let $S=\soc(G)$.
Then $S$ is a direct product of non-abelian simple groups:
\begin{equation} \label{soc-G} 
  S = T_1 \times \cdots \times T_r. 
\end{equation}  
Conjugation by $G$ permutes $\{T_1, \ldots, T_r\}$, and each minimal normal subgroup of $G$ is the direct product of all the $T_i$ lying in a single orbit.
\end{lemma}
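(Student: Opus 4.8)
The plan is to combine Corollary \ref{no-sol-subnormal} with the standard structure theory of the socle. First I would recall that $\soc(G)$ is always a direct product of minimal normal subgroups of $G$, and that each minimal normal subgroup of a finite group is itself a direct product of isomorphic simple groups (by the argument in \cite[3.3.15]{Robinson}, as already used in the proof of Lemma \ref{irred-el-ab}). So the only thing to rule out is that any of these simple factors is abelian, i.e.\ cyclic of prime order. But an abelian minimal normal subgroup would be a non-trivial soluble normal — hence subnormal — subgroup of $G$, contradicting Corollary \ref{no-sol-subnormal}. Therefore every minimal normal subgroup of $G$ is a direct product of non-abelian simple groups, and in particular has trivial centre.

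Next I would invoke Proposition \ref{socle}: since every minimal normal subgroup of $G$ has trivial centre, $\soc(G)$ is the direct product of \emph{all} the minimal normal subgroups of $G$. Splitting each minimal normal subgroup into its simple direct factors then gives the decomposition (\ref{soc-G}) with each $T_i$ non-abelian simple. For the permutation action statement: conjugation by any $g \in G$ sends a minimal normal subgroup to a minimal normal subgroup, hence permutes the set $\{T_1,\dots,T_r\}$ — here one uses that the $T_i$ are precisely the minimal normal subgroups of $S=\soc(G)$ (a direct product of non-abelian simple groups has its simple factors as its unique minimal normal subgroups), and that $g$-conjugation is an automorphism of $S$. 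Finally, a minimal normal subgroup $J$ of $G$, being a characteristically simple subgroup of $S$ that is normal in $G$, must be $G$-invariant and hence a union of $G$-orbits of the $T_i$; minimality forces it to be a single orbit, and conversely the product over any single $G$-orbit of $T_i$'s is normal in $G$ and, being characteristically simple, is minimal normal.

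I expect no serious obstacle here: every ingredient is either a cited textbook fact or one of the results proved earlier in the excerpt (Corollary \ref{no-sol-subnormal} and Proposition \ref{socle} do all the real work). The only point requiring a little care is the bookkeeping in the last sentence — matching up the minimal normal subgroups of $G$ with the $G$-orbits on $\{T_1,\dots,T_r\}$ — which is routine once one observes that each $T_i$ is normalised by $\soc(G)$ and that $G$ acts on the $T_i$ by permutation.
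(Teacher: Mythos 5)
Your proof is correct and follows essentially the same route as the paper: Corollary \ref{no-sol-subnormal} rules out abelian simple factors, Proposition \ref{socle} then gives $\soc(G)$ as the direct product of all minimal normal subgroups, and the standard description of normal subgroups of a direct product of non-abelian simple groups yields the permutation and orbit statements. (One small quibble: your closing converse claim that the product over a single $G$-orbit is minimal normal ``being characteristically simple'' is not a valid inference on its own --- that converse is not needed for the lemma, and the correct justification is the orbit argument you have already set up.)
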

\begin{proof}
Let $J$ be a minimal normal subgroup of $G$. Then $J \lhd K$ for some minimal characteristic subgroup $K$ of $G$. As $K$ is characteristically simple, it  is the direct product of isomorphic simple groups \cite[3.3.15]{Robinson}, say $K=T_1 \times \cdots \times T_m$.
By Corollary \ref{no-sol-subnormal}, these simple groups are non-abelian. 
By Proposition \ref{d-prod-simple}, a normal subgroup of $K$ is just the direct product of some of the groups $T_k$, $1 \leq k \leq m$. In particular, $J$ has trivial centre. Thus, by Proposition \ref{socle},  
$S$ is the direct product of all the minimal normal subgroups of $G$, each of which is itself a direct product of non-abelian simple groups. Hence $S$ is a direct product of non-abelian simple groups as in (\ref{soc-G}). 
  
The minimal normal subgroups of $S$ are the groups $T_k$ for $1 \leq k \leq r$. For any $g \in G$, the group $gT_k g^{-1}$ is also a minimal normal subgroup of $S$, so coincides with some $T_j$. Hence conjugation by $G$ permutes the $T_k$. Any minimal normal subgroup $J$ of $G$ is also a normal subgroup of $S$, so, by Proposition \ref{d-prod-simple}, it must be the direct product of some of the $T_k$. If it contains $T_k$ it must also contain $g T_k g^{-1}$ for all $g \in G$. Thus $J$ contains the direct product of all the conjugates of $T_k$. As this direct product is itself a normal subgroup of $G$, it coincides with $J$. Thus the minimal normal subgroups of $G$ correspond to the $G$-orbits on $\{T_1, \ldots, T_r\}$ as described. 
\end{proof}

\begin{corollary}  \label{centraliser}
Let $G$ be as in Lemma \ref{irred-soc}. Then $G$ embeds in $\Aut(\soc(G))$.
\end{corollary}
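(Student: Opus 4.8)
The plan is to use the standard fact that for any finite group $H$ with socle $S=\soc(H)$, the normal subgroup $S$ gives rise to a conjugation homomorphism $H \to \Aut(S)$ whose kernel is precisely the centraliser $C_H(S)$. Applying this with $H=G$, it therefore suffices to show that $C_G(S)=\{1\}$, where $S=\soc(G)$ is the direct product of non-abelian simple groups supplied by Lemma \ref{irred-soc}.

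First I would observe that $C_G(S)$ is a normal subgroup of $G$, being the centraliser of the normal subgroup $S$. If $C_G(S)$ were non-trivial, it would contain a minimal normal subgroup $J$ of $G$. By the very definition of the socle, every minimal normal subgroup of $G$ is contained in $\soc(G)=S$, so $J \subseteq C_G(S) \cap S = Z(S)$.

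The remaining point is that $Z(S)=\{1\}$: by Lemma \ref{irred-soc} we have $S = T_1 \times \cdots \times T_r$ with each $T_i$ a non-abelian simple group, hence centreless, so $Z(S) = Z(T_1) \times \cdots \times Z(T_r) = \{1\}$. This forces $J=\{1\}$, a contradiction, and therefore $C_G(S)=\{1\}$ and $G$ embeds in $\Aut(S)=\Aut(\soc(G))$.

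There is no real obstacle here: once Lemma \ref{irred-soc} has identified $\soc(G)$ as a direct product of non-abelian simple groups, the argument is entirely routine, the only mildly delicate step being the (immediate) observation that a minimal normal subgroup of $G$ necessarily lies inside $\soc(G)$.
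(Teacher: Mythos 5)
Your proof is correct and follows essentially the same route as the paper: the conjugation map $G \to \Aut(S)$ has kernel the centraliser of $S$, which, if non-trivial, would contain a minimal normal subgroup of $G$ forced into $Z(S)=\{1\}$. The only cosmetic difference is that the paper notes the centraliser is characteristic rather than merely normal, which changes nothing in the argument.
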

\begin{proof}
Again let $S = \soc(G)$. As $S \lhd G$, conjugation gives a homomorphism $G \to \Aut(S)$ whose kernel is the centraliser $C$ of $S$ in $G$. Thus it suffices to show that $C$ is trivial. Now $C$ is a characteristic subgroup of $G$. If $C \neq \{1\}$ then $C$ contains a minimal normal subgroup $D$ of $G$. But then $D \leq S$, so $D$ is contained in the centre of $S$. This is a contradiction since $S$ has trivial centre by Lemma \ref{irred-soc}.
\end{proof}

\section{Clifford theory}  \label{clifford}

In this section, $(G,V)$ is again an irreducible solution to Question \ref{main-qn} with $V=\F_p^n$. By Proposition \ref{trans} and Lemma \ref{p-normal}, the subgroup of translations in $G$ is trivial, so $G$ acts faithfully on the irreducible $\F_p[G]$-module $V$.

Let $S=\soc(G)$. 
It follows from Lemma \ref{irred-soc} and Proposition \ref{socle} that $S$ is the direct product of the minimal normal subgroups of $G$, each of which is in turn the direct product of isomorphic non-abelian simple groups permuted transitively by $G$. We now consider the representation of the normal subgroup $S$ of $G$ afforded by $V$. In other words, we apply Clifford theory to our situation. 

It is convenient to extend scalars to a finite extension $E$ of $\F_p$ which splits $G$. Then $E$ also splits every subgroup $H$ of $G$, so every irreducible $E[H]$-module is absolutely irreducible.
We write $V_E=E \otimes_{\F_p} V$.

\begin{proposition}  \label{VE} 
$V_E$ is a direct sum of irreducible $E[G]$-modules and $G$ acts faithfully on each of these submodules.
\end{proposition}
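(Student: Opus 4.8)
The plan is to use Galois descent along $E/\F_p$, which is automatically a Galois extension since every extension of finite fields is cyclic. Write $\Gamma=\Gal(E/\F_p)$. Recall from the start of this section that $G$ acts faithfully on the $\F_p[G]$-module $V$, the translation subgroup $\Tr(G)$ being trivial by Proposition \ref{trans}; hence $\ker_G(V)=\{1\}$, where $\ker_G(M)$ denotes the set of $g\in G$ acting trivially on a module $M$. Now $\Gamma$ acts on $V_E=E\otimes_{\F_p}V$ by $\sigma\cdot(\lambda\otimes v)=\sigma(\lambda)\otimes v$; this action is $\sigma$-semilinear over $E$, it commutes with the (second-factor, $E$-linear) action of $G$, and its fixed-point set is $1\otimes V$. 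Standard Galois descent then says: a $\Gamma$-stable $E$-subspace $X$ of $V_E$ satisfies $X=E\otimes_{\F_p}\bigl(X\cap(1\otimes V)\bigr)$, and $X$ is $G$-stable precisely when $X\cap(1\otimes V)$ is a $G$-submodule of $V$.

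First I would pick a nonzero irreducible $E[G]$-submodule $W$ of $V_E$. For $\sigma\in\Gamma$, the image $\sigma(W)\subseteq V_E$ is again an $E$-subspace and an $E[G]$-submodule, and in fact irreducible, because $\sigma$ restricts to a $G$-equivariant bijection $W\to\sigma(W)$ carrying $E$-subspaces to $E$-subspaces; for the same reason $\ker_G(\sigma(W))=\ker_G(W)$. The sum $\sum_{\sigma\in\Gamma}\sigma(W)$ is then a nonzero, $G$-stable, $\Gamma$-stable $E$-subspace of $V_E$, so by descent it equals $E\otimes_{\F_p}V_0$ for some nonzero $G$-submodule $V_0$ of $V$; since $V$ is irreducible, $V_0=V$ and therefore $\sum_{\sigma}\sigma(W)=V_E$. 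In particular $V_E$ is a sum of irreducible submodules, hence semisimple, hence a direct sum of irreducible $E[G]$-modules, which is the first assertion.

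For the faithfulness, I would run the same argument starting from an arbitrary irreducible $E[G]$-submodule $\widetilde W\subseteq V_E$: it yields $\sum_{\sigma\in\Gamma}\sigma(\widetilde W)=V_E$, and since each $\ker_G(\sigma(\widetilde W))$ equals $\ker_G(\widetilde W)$,
$$ \ker_G(\widetilde W)=\bigcap_{\sigma\in\Gamma}\ker_G(\sigma(\widetilde W))=\ker_G(V_E)=\ker_G(V)=\{1\}. $$
Thus $G$ acts faithfully on every irreducible submodule of $V_E$, and in particular on each summand produced above, completing the proof.

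The only steps requiring any care are the basic properties of the semilinear $\Gamma$-action and the invocation of Galois descent to pass from the $\Gamma$-stable $G$-submodule $\sum_\sigma\sigma(W)$ of $V_E$ down to a $G$-submodule of $V$; both are routine. An alternative, slightly less self-contained route is to cite the general facts that extension of scalars along a separable (here finite) field extension preserves semisimplicity of modules over a finite-dimensional algebra, and that the irreducible constituents of $E\otimes_{\F_p}V$ for irreducible $V$ form a single orbit under $\Gamma$; combined with $\ker_G(\sigma(\widetilde W))=\ker_G(\widetilde W)$ and $\ker_G(V)=\{1\}$ these give the proposition at once.
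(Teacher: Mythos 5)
Your proof is correct and follows essentially the same route as the paper: the key point in both is that the irreducible constituents of $V_E$ are permuted transitively by $\Gal(E/\F_p)$, so an element of $G$ acting trivially on one constituent acts trivially on all of $V_E$, hence is trivial because $G$ acts faithfully on $V$. The only difference is that where the paper cites \cite[(7.11)]{CR} for the semisimplicity of $V_E$ and the Galois conjugacy of its constituents, you reprove these facts directly by the standard Galois descent argument, which is fine and makes the step self-contained.
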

\begin{proof}
By \cite[(7.11)]{CR}, $V_E$ is a direct sum of irreducible $E[G]$-modules. Moreover,
these modules are all conjugate under the action of $\Omega = \Gal(E/\F_p)$. This follows from the proof of \cite[(7.11)]{CR}) since, if $k$ is a subfield of $E$ of degree $d$ over $\F_p$, then we have an isomorphism $E \otimes_{\F_p} k \to E^d$ of $E$-algebras given by $x \otimes y \mapsto ( x \sigma(y))_{\sigma \in \Gal(k/\F_p)}$.

Let $W$ be an irreducible $E[G]$-submodule of $V_E$. If $g \in G$ acts trivially on $W$, then it also acts trivially on all Galois conjugates of $W$. Thus $g$ acts trivially on $V_E$. Since $G$ acts faithfully on $V$, this forces $g=1$. Hence $G$ acts faithfully on $W$.
\end{proof}

We will also need the following fact.

\begin{proposition} \label{irred-tensor}
Let $H$ be any subgroup of $G$ which decomposes  as a direct product $H=A \times B$. Then any irreducible $E[H]$-module has the form $X \otimes_E Y$ where $X$ (respectively, $Y$) is an irreducible $E[A]$-module (respectively, $E[B]$-module).
\end{proposition}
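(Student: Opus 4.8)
The plan is to exploit the standing hypothesis that $E$ is a splitting field for $G$: it then also splits the subgroup $H$ and each of the direct factors $A$ and $B$, so that every irreducible module over $E[H]$, $E[A]$ or $E[B]$ is absolutely irreducible. The proposition is then the classical description of the irreducible representations of a direct product over a splitting field, and could be quoted from \cite{CR}; I would nonetheless record the short argument, since it is the structure of the group algebra that makes it work.

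First I would identify $E[H]$ with $E[A]\otimes_E E[B]$ via the canonical isomorphism arising from $H=A\times B$. Because $E$ splits $A$ and $B$, the Wedderburn decompositions of the semisimple quotients take the split form
\[
 E[A]/\rad(E[A]) \cong \prod_i M_{a_i}(E), \qquad E[B]/\rad(E[B]) \cong \prod_j M_{b_j}(E),
\]
where $a_i=\dim_E X_i$ and $b_j=\dim_E Y_j$ for the irreducible $E[A]$-modules $X_i$ and the irreducible $E[B]$-modules $Y_j$. Next I would consider the two-sided ideal
\[
 K = \rad(E[A])\otimes_E E[B] \ + \ E[A]\otimes_E \rad(E[B])
\]
of $E[H]$. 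On one hand $K$ is nilpotent, being the sum of two nilpotent two-sided ideals, so $K\subseteq\rad(E[H])$. On the other hand, right exactness of $\otimes_E$ gives $E[H]/K \cong \bigl(E[A]/\rad(E[A])\bigr)\otimes_E\bigl(E[B]/\rad(E[B])\bigr)$, and using $M_a(E)\otimes_E M_b(E)\cong M_{ab}(E)$ this is isomorphic to $\prod_{i,j} M_{a_i b_j}(E)$, which is semisimple; hence $\rad(E[H])\subseteq K$. Therefore $K=\rad(E[H])$, and the irreducible $E[H]$-modules are exactly the simple modules of $\prod_{i,j}M_{a_i b_j}(E)$. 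Tracing through the isomorphisms, the $(i,j)$-th of these is $X_i\otimes_E Y_j$, on which $(a,b)\in A\times B$ acts as the action of $a$ on $X_i$ tensored with the action of $b$ on $Y_j$, and every irreducible $E[H]$-module arises in this way.

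The only step that is not purely formal is the claim that $E[H]/K$ is semisimple, i.e.\ that the tensor product over $E$ of the two semisimple algebras $E[A]/\rad(E[A])$ and $E[B]/\rad(E[B])$ is again semisimple; this can fail for tensor products of semisimple algebras over a field in general, and holds here precisely because the splitting hypothesis exhibits both factors as products of full matrix algebras over $E$, for which $M_a(E)\otimes_E M_b(E)\cong M_{ab}(E)$. This is the one place where the assumption that $E$ splits $G$ is used essentially; everything else in the argument is routine.
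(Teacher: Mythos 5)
Your argument is correct and is essentially the paper's approach: the proof in the paper likewise observes that the splitting hypothesis makes $E[A]/\rad(E[A])$ and $E[B]/\rad(E[B])$ split semisimple and then simply cites \cite[(10.38)(iii)]{CR}, which is exactly the statement you prove by hand via $E[H]\cong E[A]\otimes_E E[B]$ and the identification of the radical. Writing out the radical computation is a fine (and correct) expansion of that citation, but it adds nothing beyond the quoted result.
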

\begin{proof}
As $E$ splits $A$ and $B$, the semisimple algebras $E[A]/\rad(E[A])$ and $E[B]/\rad(E[B])$
are split. The result now follows from \cite[(10.38)(iii)]{CR}.
\end{proof}

Let $W$ be any of the irreducible $E[G]$-modules in Proposition \ref{VE}. Clearly we have
\begin{equation} \label{VE-dim}
 \dim_E W \leq \dim_E V_E = \dim_{\F_p} V  . 
 \end{equation}
Next let $U$ be an irreducible $E[S]$-submodule of $W$.
For each $g \in G$, the subspace $gU$ of $W$ is an irreducible $E[S]$-module since $S \lhd G$. Then $W$ is the sum of these modules. It follows that $W$ is a semisimple $E[S]$-module, and hence $W$ is the direct sum of some of the $gU$:
\begin{equation} \label{U-sum}
  W = \bigoplus_{i=1}^m g_i U, 
\end{equation}
with $g_1, \ldots, g_m \in G$. Without loss of generality, we take $g_1=1$.

Now let $J$ be a minimal normal subgroup of $G$, so by Lemma \ref{irred-soc} 
\begin{equation} \label{J-prod}
   J = T_1 \times \cdots \times T_{r(J)}, 
\end{equation}
for some $r(J) \geq 1$, where conjugation by $G$ permutes $T_1, \ldots, T_{r(J)}$ transitively. Thus there is a non-abelian simple group $T_J$ such that $T_k \cong T_J$ for $1 \leq k \leq r(J)$.  Each summand  $g_i U$ in (\ref{U-sum}) is an $E[T_k]$-module for each of the groups $T_k$. Choosing $i=1$, we define $y(J)$ to be the number of groups $T_k$ which act non-trivially on $U=g_1 U$. Similarly, choosing $k=1$, we define $z(J)$ to be the number of summands $g_i U$ on which $T_1$ acts non-trivially. (We shall see that $y(J)$ and $z(J)$ are in fact independent of these choices.)

\begin{proposition} \label{mk-ry-prop}
For each minimal normal subgroup $J$ of $G$, we have 
\begin{equation} \label{mk-ry}
     m y(J) = r(J) z(J).
\end{equation}
Moreover, $y(J) \geq 1$ and $z(J) \geq 1$.
\end{proposition}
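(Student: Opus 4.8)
The plan is to prove the identity $m\,y(J)=r(J)\,z(J)$ by double-counting the set
$$ \mathcal{P}=\{(i,k) : 1\le i\le m,\ 1\le k\le r(J),\ T_k \text{ acts non-trivially on } g_i U\}. $$
Two facts will be used throughout: (a) by Lemma \ref{irred-soc}, conjugation by any element of $G$ permutes $\{T_1,\dots,T_{r(J)}\}$, and (since these are the simple factors of a single minimal normal subgroup) $G$ acts transitively on this set; (b) by Proposition \ref{VE}, $G$ acts faithfully on $W$. For the \emph{row count}, fix $i$. Since $J\lhd G$, conjugation by $g_i$ carries $T_k$ to some $T_{\rho_i(k)}$, with $\rho_i$ a permutation of $\{1,\dots,r(J)\}$, and $T_k$ acts non-trivially on $g_iU$ exactly when $g_i^{-1}T_kg_i=T_{\rho_i(k)}$ acts non-trivially on $U$. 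Hence the number of $k$ with $(i,k)\in\mathcal{P}$ equals the number of $l$ with $T_l$ non-trivial on $U$, namely $y(J)$; summing over $i$ gives $|\mathcal{P}|=m\,y(J)$.

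For the \emph{column count} the decomposition (\ref{U-sum}) is not canonical, so whether $T_k$ is non-trivial on $g_iU$ must be organised through data that $G$ genuinely permutes. I would pass to the set $\Lambda$ of $E[S]$-isomorphism classes occurring among the irreducible constituents $g_1U,\dots,g_mU$ of $W|_S$. By standard Clifford theory, $G$ acts transitively on $\Lambda$ and each class in $\Lambda$ occurs with one and the same multiplicity $e$, so $m=e\,|\Lambda|$. Whether $T_k$ acts non-trivially on a constituent depends only on its class, so one gets a well-defined $\chi_k(\lambda)\in\{0,1\}$ for $\lambda\in\Lambda$. Double-counting $\sum_{\lambda\in\Lambda}\sum_k\chi_k(\lambda)$: writing $\lambda=g\cdot[U]$ and arguing as in the row count shows $\sum_k\chi_k(\lambda)=y(J)$ for every $\lambda$, so this sum is $|\Lambda|\,y(J)$; on the other hand, writing $T_k=hT_1h^{-1}$ and using that $h^{-1}$ permutes $\Lambda$ shows $c_k:=\sum_\lambda\chi_k(\lambda)$ is independent of $k$, say equal to $c$, so the sum is $r(J)\,c$. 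Thus $|\Lambda|\,y(J)=r(J)\,c$. Finally $z(J)=e\,c$ (the class of $U$ accounts for $e$ of the summands $g_iU$), whence $r(J)\,z(J)=e\,r(J)\,c=e\,|\Lambda|\,y(J)=m\,y(J)$.

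For positivity: if every $T_k$ with $1\le k\le r(J)$ acted trivially on $U$, then $J=T_1\times\cdots\times T_{r(J)}$ would act trivially on $U$; since $g_i^{-1}Jg_i=J$, it would then act trivially on each $g_iU$, hence on $W$, contradicting (b) because $J\ne\{1\}$. So $y(J)\ge1$, and then $z(J)=m\,y(J)/r(J)$ is a positive integer, giving $z(J)\ge1$.

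The main obstacle is the column count, and specifically the non-uniqueness of (\ref{U-sum}): one cannot simply compare the family $\{g_iU\}$ with $\{h^{-1}g_iU\}$ for an $h$ conjugating $T_1$ to $T_k$, because these need not be the same family of submodules when $W|_S$ has repeated constituents. Re-routing the count through the isomorphism classes in $\Lambda$ — equivalently, through the canonical homogeneous components of $W|_S$, which $G$ really does permute — while keeping track of the common multiplicity $e$ is the step that makes everything go through; once that is set up, both counts reduce to the elementary fact (a).
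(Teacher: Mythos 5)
Your proposal is correct and takes essentially the same route as the paper: the identical double count of pairs $(i,k)$, with the same row count via conjugation by $g_i^{-1}$ permuting the factors $T_k$. Your more elaborate column count through the isomorphism classes $\Lambda$ and the common multiplicity $e$ is just an expanded justification of the paper's one-line argument that the two decompositions $\bigoplus_i g_iU=\bigoplus_i g^{-1}g_iU$ of $W$ into irreducible $E[S]$-modules contain isomorphic summands (so counting summands on which $T_1$ acts non-trivially is decomposition-independent); the only other, inessential, difference is that you establish $y(J)\geq 1$ directly and deduce $z(J)\geq 1$ from (\ref{mk-ry}), whereas the paper does the reverse.
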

\begin{proof}
We count in two ways the number of pairs $(i,k)$ such that $T_k$ acts non-trivially on $g_i U$.  Firstly, for each $i$, the group
$T_k$ acts non-trivially on $g_i U$ if and only if $g_i^{-1} T_k g_i$ acts non-trivially on $U$. Since conjugation by $g_i^{-1}$ permutes the groups $T_k$, and $y(J)$ of the groups $T_k$ act non-trivially on $U$, it follows that 
there are $y(J)$ groups $T_k$ which act non-trivially on $g_i U$. (This shows that $y(J)$ does not depend on the initial choice $i=1$.) Thus the number of pairs $(i,k)$ as above is $my(J)$. 
Secondly, for $1 \leq k \leq r(J)$ we have $T_k=gT_1g^{-1}$ for some $g \in G$, so that $T_k$ acts non-trivially on $g_i U$ if and only if $T_1$ acts non-trivially on $g^{-1} g_i U$. There are $z(J)$ values of $i$ for which $T_1$ acts non-trivially on $g^{-1} g_i U$ since the two decompositions of $W$ into irreducible $E[S]$-modules,
$$   W = \bigoplus_{i=1}^m g_i U = \bigoplus_{i=1}^m g^{-1}\ g_i U, $$
must contain isomorphic summands. Thus there are $z(J)$ values of $i$ for which $T_k$ acts non-trivially on $g_i U$. (This shows that $z(J)$ does not depend on the initial choice $k=1$.)
There are therefore $r(J)z(J)$ pairs $(i,k)$ such that $T_k$ acts non-trivially on $g_i U$. Hence (\ref{mk-ry}) holds. Moreover, as the non-abelian group $T_1$ cannot be contained in $\Tr(G)$, its linear action on $W$ is non-trivial. Thus $z(J) \geq 1$. Hence by (\ref{mk-ry}) we also have $y(J) \geq 1$. 
\end{proof}

For a non-abelian simple group $T$, let $d_p(T)$ denote the minimal degree of a non-trivial irreducible $F[T]$-module, where $F$ is any splitting field for $T$ in characteristic $p$. Then $d_p(T) \geq 2$.

In the following, we write $v_p(x)$ for the $p$-adic valuation of an integer $x \neq 0$: $v_p(x)=a$ if $p^a$ divides $x$ but $p^{a+1}$ does not. 

\begin{lemma} \label{key-ineq-lemma}
With the above notation,
\begin{equation} \label{key-ineq}
  m \prod_J d_p(T_J)^{y(J)} < 
        \sum_J r(J) \left( v_p(|\Aut(T_J)|)  + \frac{1}{p-1}\right), 
\end{equation}
the product and sum being over all minimal normal subgroups $J$ of $G$.
\end{lemma}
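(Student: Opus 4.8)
The plan is to bound both sides of (\ref{key-ineq}) in terms of the dimension $n = \dim_{\F_p} V$. For the left-hand side: by Clifford theory, the summand $U$ in (\ref{U-sum}) is an irreducible $E[S]$-module, and since $S = T_1 \times \cdots \times T_r$ is a direct product of non-abelian simple groups, Proposition \ref{irred-tensor} (applied repeatedly) shows $U$ is an external tensor product of irreducible $E[T_k]$-modules, one for each $k$. The tensor factor for a given $T_k$ is non-trivial precisely for the $y(J)$ indices $k$ (lying in the orbit $J$) that act non-trivially on $U$; each such non-trivial factor has dimension at least $d(T_J) \geq 2$, while the trivial factors contribute dimension $1$. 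Hence $\dim_E U \geq \prod_J d(T_J)^{y(J)}$, and since $W = \bigoplus_{i=1}^m g_i U$ we get $\dim_E W = m \dim_E U \geq m \prod_J d(T_J)^{y(J)}$. Combining with (\ref{VE-dim}) gives $m \prod_J d(T_J)^{y(J)} \leq n$.

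For the right-hand side, the idea is that $n = \dim_{\F_p} V$ is constrained from below by the requirement that $G$ act transitively on $V$ with a soluble stabiliser of index $p^n$. Concretely, I expect the argument to run: the stabiliser $G'$ of $0_V$ is soluble of index $p^n$ in $G$, so $v_p(|G|) \geq n + v_p(|G'|) \geq n$; on the other hand $v_p(|G|)$ is controlled by $\soc(G)$ and $\Out(\soc(G))$ via Corollary \ref{centraliser}, which gives $G \hookrightarrow \Aut(S)$, whence $v_p(|G|) \leq v_p(|\Aut(S)|) = \sum_J r(J)\, v_p(|\Aut(T_J)|)$, using that $\Aut(T_1 \times \cdots \times T_r) \cong \Aut(T_1) \wr S_r$ when the factors are isomorphic non-abelian simple (and $v_p$ of the permutation part $S_{r(J)}$ contributes the $\tfrac{1}{p-1}$-type slack — more precisely $v_p(r!) \leq \tfrac{r-1}{p-1} < \tfrac{r}{p-1}$). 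Putting $n \leq v_p(|G|) \leq \sum_J r(J)\bigl(v_p(|\Aut(T_J)|) + \tfrac{1}{p-1}\bigr)$ and chaining with the left-hand bound $m\prod_J d(T_J)^{y(J)} \leq n$ yields (\ref{key-ineq}); the strictness presumably comes from one of these inequalities being strict (e.g. $v_p(r!) < \tfrac{r}{p-1}$ strictly, or $d(T_J) \geq 2$ forcing $m \prod \cdots \leq n$ with a genuine gap when $G$ is insoluble so that $m \prod_J d(T_J)^{y(J)} \geq 2$ while the stabiliser contributes at least a factor, making the first inequality strict).

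I would carry out the steps in this order: (1) establish $\dim_E U \geq \prod_J d(T_J)^{y(J)}$ via Proposition \ref{irred-tensor} and the definition of $d(T_J)$; (2) deduce $m \prod_J d(T_J)^{y(J)} \leq n$ from (\ref{U-sum}) and (\ref{VE-dim}); (3) bound $n \leq v_p(|G|)$ using that the point stabiliser has $p$-power index; (4) bound $v_p(|G|) \leq \sum_J r(J) v_p(|\Aut(T_J)|) + \sum_J v_p(r(J)!)$ via Corollary \ref{centraliser} and the structure of $\Aut(S)$; (5) bound $v_p(r!) < \tfrac{r}{p-1}$ by Legendre's formula; (6) assemble and track where strictness enters.

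The main obstacle I anticipate is step (4): pinning down exactly how $v_p(|G|)$ relates to the automorphism group of the socle when there are several distinct orbits $J$ (so $S$ is a product of several non-isomorphic characteristically-simple pieces), and making sure the $S_{r(J)}$-factors are accounted for cleanly. One must check $\Aut(S) \cong \prod_J \bigl(\Aut(T_J) \wr S_{r(J)}\bigr)$ (valid because the $T_J$ for distinct $J$ are non-isomorphic simple groups, so automorphisms cannot mix the orbits), and then that $v_p$ is additive over this product. A secondary subtlety is justifying the \emph{strict} inequality rather than $\leq$; I would look for the cleanest single source of slack — most likely the elementary bound $v_p(r!) = \tfrac{r - s_p(r)}{p-1} < \tfrac{r}{p-1}$ where $s_p(r) \geq 1$ is the digit sum of $r$ in base $p$ — which gives a strict gap in step (5) for every $J$, hence strictness overall.
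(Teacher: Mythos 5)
Your proposal is correct and follows essentially the same route as the paper: bound $\dim_E W \geq m\prod_J d(T_J)^{y(J)}$ via the tensor decomposition of $U$ over the simple factors of $\soc(G)$, chain this through $\dim_E W \leq \dim_{\F_p} V \leq v_p(|G|) \leq v_p(|\Aut(\soc(G))|)$, and extract the strict inequality from $v_p(r!) < r/(p-1)$ applied to the permutation part of $\Aut(\soc(G))$. The only cosmetic difference is that the paper bounds the permutation part by a single $S_r$ with $r=\sum_J r(J)$ rather than by $\prod_J S_{r(J)}$, which changes nothing.
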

\begin{proof}
Using Proposition \ref{irred-tensor} repeatedly, we find that the irreducible $E[S]$-module $U$ decomposes as a tensor product over $E$:
\begin{equation} \label{U-tensor}
    U = \bigotimes_J U_J,
\end{equation}
where each factor $U_J$ is an irreducible $E[J]$-module.
Moreover, from Proposition \ref{irred-tensor} again and (\ref{J-prod}), we have 
$$	 U_J = U_{J,1} \otimes_E \cdots \otimes_E U_{J,r(J)} $$
where $U_{J,k}$ is an irreducible $E[T_k]$-module. There are precisely $y(J)$ values of $k$ such that $U_{J,k}$ is a non-trivial $E[T_k]$-module. (For the remaining values of $k$, we have that $U_{J,k}$ is the $1$-dimensional trivial $E[T_k]$-module.) Hence we have 
$\dim_E(U_J)  \geq d_p(T_J)^{y(J)}$ for each minimal normal subgroup $J$. It then follows from (\ref{U-sum}) and (\ref{U-tensor}) that  
\begin{equation} \label{dimW}
   \dim_E W =m \dim_E U \geq m \prod_J d_p(T_J)^{y(J)}.
\end{equation}

Let $r= \sum_J r(J)$. Then $S$ is the direct product of $r$ non-abelian simple groups:
$$ S = \prod_J J \cong \prod_J  T_J^{r(J)}. $$
Since any automorphism of $S$ must permute the $r$ simple groups in this product, it follows that $\prod_J  \Aut(T_J)^{r(J)}$ is a normal subgroup of $\Aut(S)$ with quotient isomorphic some subgroup of $S_r$. (This subgroup cannot be transitive if $G$ has more than one minimal normal subgroup.) Now $G$ embeds in $\Aut(S)$ by Corollary \ref{centraliser}, so
\begin{eqnarray*}
   v_p(|G|) & \leq & v_p(|\Aut(S)|) \\
     & \leq & v_p \left( \left| \prod_J  \Aut(T_J) ^{r(J)}\right| \right) 
                     + v_p \left( |S_r| \right)  \\
         & < & \left(  \sum_J r(J)  v_p(|\Aut(T_J)|)\right)  + \frac{r}{p-1} \\   
                                 & = & \sum_J r(J) \left( v_p(|\Aut(T_J)|)  + \frac{1}{p-1}\right).
 \end{eqnarray*} 
On the other hand, since $G$ acts transitively on $V$, we have
\begin{equation} \label{dim-V-G}
   v_p(|V|)=\dim_{\F_p} V \leq v_p(|G|). 
\end{equation}
Using (\ref{VE-dim}) and (\ref{dimW}), we then have (\ref{key-ineq}).
\end{proof}

We summarise our conclusions so far.

\begin{theorem} \label{no-CFSG}
Let $(G,V)$ be an irreducible solution to Question \ref{main-qn}: $V$ is a finite soluble 
group, $G$ is an insoluble 
transitive subgroup of $\Hol(V)$ such that $V$ has no non-trivial proper $G$-invariant normal subgroup, and the stabiliser in $G$ of any element of $V$ is soluble. Then
\begin{itemize}
\item[(i)] $V$ is an elementary abelian group: $V \cong \F_p^n$ for some prime $p$ and some $n \geq 1$, and,
viewed as a subgroup of $\Aff(V)$, the group $G$ contains no non-trivial translations;
\item[(ii)] $\soc(G)$ is the direct product of all the minimal normal subgroups $J$ of $G$,  
each of which is in turn a direct product $J \cong T_J^{r(J)}$ for some non-abelian simple group $T_J$ and some $r(J) \geq 1$;
\item[(iii)] $G$ embeds in $\Aut(\soc(G))$;
\item[(iv)] each of the non-abelian simple groups $T_J$ has a soluble subgroup of $p$-power index;
\item[(v)] there is an integer $m$, and integers $y(J)$, $z(J)$ for each $J$, such that $1 \leq y(J) \leq r(J)$, $1 \leq z(J) \leq m$, and 
(\ref{mk-ry}) and (\ref{key-ineq}) hold.
\end{itemize}
\end{theorem}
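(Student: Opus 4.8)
The statement is a compilation of facts established earlier in the paper, so the plan is essentially to assemble the relevant citations in order, checking in each case that the hypotheses of the quoted result are met.

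First I would dispose of part~(i). That $V$ is elementary abelian is immediate from Lemma~\ref{irred-el-ab}, since in an irreducible solution $G$ acts transitively on $V$ and $V$ has no proper $G$-invariant normal subgroup. For the absence of translations I would invoke Proposition~\ref{trans}: $\Tr(G) \cong U = \{ u \in V : (u,I) \in G \}$, and $U$ is a $G$-invariant subspace of $V$. If $U \neq \{0_V\}$, then irreducibility forces $U = V$, so $\Tr(G)$ acts regularly on $V$; then the stabiliser $G'$ of $0_V$ meets $\Tr(G)$ trivially while $|G| = |V|\,|G'| = |\Tr(G)|\,|G'|$, so $G' \cong G/\Tr(G)$. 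But $\Tr(G)$ is abelian and $G'$ is soluble by hypothesis, so $G$ would be soluble, a contradiction. Hence $\Tr(G) = \{1\}$ and in particular $G$ acts faithfully on $V$.

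Before turning to the remaining parts I would record that $(G,V)$ satisfies Hypothesis~\ref{p-sol-stab}: $G$ is transitive on $V = \F_p^n$ and every point stabiliser is conjugate to $G'$, which is soluble of index $p^n$. With this in hand, parts~(ii) and~(iii) are precisely Lemma~\ref{irred-soc} and Corollary~\ref{centraliser}. For part~(iv), each $T_J$ occurs as a composition factor of the normal subgroup $\soc(G)$, hence of $G$; applying Lemma~\ref{comp-factors}(ii) to a composition series of $G$ that refines one passing through $\soc(G)$ shows that $T_J$ has a soluble subgroup of index a power of $p$. Finally, for part~(v), the existence of $m$ and of integers $y(J)$, $z(J)$ satisfying (\ref{mk-ry}) together with $y(J), z(J) \geq 1$ is Proposition~\ref{mk-ry-prop}, the inequality (\ref{key-ineq}) is Lemma~\ref{key-ineq-lemma}, and the upper bounds $y(J) \leq r(J)$ and $z(J) \leq m$ are immediate from the definitions of $y(J)$ (a count among $r(J)$ simple factors) and of $z(J)$ (a count among $m$ summands).

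Since every ingredient has already been proved, there is no genuine obstacle within this theorem itself; its purpose is organisational. If I were to locate the real difficulty, it lies upstream: in setting up the Clifford-theoretic counting behind Proposition~\ref{mk-ry-prop} and in extracting the clean numerical inequality (\ref{key-ineq}) from the interplay between $\dim_E W$, the tensor decomposition (\ref{U-tensor}) of the irreducible $E[S]$-module $U$, and the $p$-adic valuation of $|\Aut(\soc(G))|$. The only point requiring care in the assembly is consistency of notation — that the same $m$, $r(J)$, $y(J)$, $z(J)$ and the same composition series are used across the cited results — which is already ensured because all of \S\ref{subgps-affine}--\S\ref{clifford} works with a single fixed irreducible solution $(G,V)$.
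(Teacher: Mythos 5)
Your proposal is correct and takes essentially the same route as the paper, whose proof of Theorem \ref{no-CFSG} is exactly this assembly: (i) from Lemma \ref{irred-el-ab} and Proposition \ref{trans}, (ii) from Lemma \ref{irred-soc}, (iii) from Corollary \ref{centraliser}, (iv) from Lemma \ref{comp-factors}(ii), and (v) from Proposition \ref{mk-ry-prop} and Lemma \ref{key-ineq-lemma}. Your extra detail ruling out a non-trivial $\Tr(G)$ (regular normal abelian subgroup forcing $G$ soluble) is the same argument the paper uses inside Lemma \ref{p-normal}, so nothing further is required.
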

\begin{proof}
(i) follows from Lemma \ref{irred-el-ab}, Proposition \ref{trans} and Lemma \ref{p-normal}; 
(ii) from Lemma \ref{irred-soc}; (iii) from Corollary \ref{centraliser}; (iv) from 
Lemma \ref{comp-factors}(ii); and (v) recapitulates Proposition \ref{mk-ry-prop} and Lemma \ref{key-ineq-lemma}.
\end{proof}

Theorem \ref{no-CFSG} gives very stringent conditions on $(G,V)$ since, 
on the one hand, it is rare for a non-abelian simple group to have a soluble subgroup of prime-power index and, on the other hand, most choices of the 
numerical parameters will make the product on the left of (\ref{key-ineq}) exceed the sum on the right. We know from Theorem \ref{irred-constr}, however, that these conditions can be satisfied.

\section{Applying the classification of finite simple groups} \label{cfsg}

As a consequence of the Classification of Finite Simple Groups, Guralnick \cite{Gural} proved the following result:

\begin{theorem}[Guralnick] \label{Guralnick}
If $T$ is a non-abelian finite simple group with a proper subgroup $R$ of
prime-power index $p^a$, then one of the following holds. 
\begin{itemize} 
\item[(i)] $T=A_n$ and $R=A_{n-1}$ with $n=p^a$;
\item[(ii)] $T=\PSL_n(q)$, $p^a=(q^n-1)/(q-1)$ and $R$ is the
  stabiliser of a line or a hyperplane in $\F_q^n$; 
\item[(iii)] $T=\PSL_2(11)$ and $R=A_5$ of index $11$;
\item[(iv)] $T=M_{23}$, $R=M_{22}$ or $T=M_{11}$, $R=M_{10}$;
\item[(v)] $T=\PSU_4(2) \cong \PSp_4(3)$ and $R$ has index $27$.
\end{itemize}
\end{theorem}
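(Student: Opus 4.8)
The statement is Guralnick's theorem \cite{Gural}, and I would present its proof exactly as a corollary of the Classification of Finite Simple Groups; here is the strategy. The first move is to reduce to a question about maximal subgroups: if $R<T$ has index $p^a$, then any maximal subgroup $M$ of $T$ with $R\le M$ has index dividing $p^a$, hence again a power of $p$, so it suffices to classify the \emph{maximal} subgroups of prime-power index in non-abelian simple groups, after which a short piece of bookkeeping --- checking that each such $M$ has no proper subgroup whose index in $T$ is a prime power unless that subgroup is already on the list --- recovers the general case. It is also worth recording the constraint that will be used throughout: letting $T$ act on the $p^a$ cosets of $M$, a Sylow $p$-subgroup $P$ of $T$ acts transitively, because $|T:M|$ being a pure power of $p$ forces $PM=T$; consequently $P\cap M$ is a Sylow $p$-subgroup of $M$, which is especially useful when $p$ is the defining characteristic. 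One then runs through the families supplied by the Classification.

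For $T=A_n$ I would invoke the O'Nan--Scott description of maximal subgroups. An intransitive maximal subgroup is $(S_k\times S_{n-k})\cap A_n$, of index $\binom{n}{k}$, and a classical number-theoretic fact (due in essence to Sylvester, with Erd\H{o}s's elementary proof) on prime divisors of products of consecutive integers shows $\binom{n}{k}$ is a prime power only for $k\in\{1,n-1\}$, giving case (i) with $n=p^a$. Imprimitive and primitive maximal subgroups have index divisible by at least two distinct primes apart from finitely many small degrees, which I would check by hand, obtaining nothing new. The sporadic simple groups are disposed of by inspecting their known lists of maximal subgroups, which yields precisely $M_{10}<M_{11}$ of index $11$ and $M_{22}<M_{23}$ of index $23$, i.e.\ case (iv).

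For $T$ of Lie type over $\F_q$, $q=p_0^f$, I would first treat the parabolic subgroups and then use Liebeck's bounds on the orders of maximal subgroups to show that a non-parabolic maximal subgroup has index too large to be a prime power apart from finitely many small configurations, checked directly. The index of a maximal parabolic is a product of factors of the shape $q^i-1$ (or $q^i\pm1$), and Zsygmondy's theorem on primitive prime divisors forces all but one such factor to be trivial: for $\PSL_n(q)$ this singles out exactly the stabiliser of a point or of a hyperplane, of index $(q^n-1)/(q-1)$ --- automatically coprime to $p_0$, consistent with the statement --- giving case (ii), while a direct analysis of the remaining classical and exceptional types, together with the groups bypassed by the Zsygmondy exceptions and the exceptional isogenies, leaves only the index-$27$ subgroup of $\PSU_4(2)\cong\PSp_4(3)$, case (v), and the maximal subgroup $A_5<\PSL_2(11)$ of index $11$, case (iii). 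Assembling all of this gives the stated list.

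The main obstacle is simply that the argument is irreducibly CFSG-dependent and the case analysis is extensive. The most delicate ingredients are the number theory needed to decide exactly when $\binom{n}{k}$, the Gaussian binomial coefficients $\binom{n}{k}_q$, and the parabolic indices of the classical groups are prime powers (Sylvester--Erd\H{o}s and Zsygmondy, each with their own list of exceptions), the uniform treatment of non-parabolic maximal subgroups of groups of Lie type via Liebeck-type order bounds, and the careful bookkeeping required not to overlook the exceptional coincidences --- the isogeny $\PSU_4(2)\cong\PSp_4(3)$, the two Mathieu examples, $\PSL_2(11)$, and the small-rank isomorphisms among simple groups --- nor to mishandle the upgrade from maximal subgroups to arbitrary subgroups of prime-power index.
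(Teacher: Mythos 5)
You should first be aware that the paper contains no proof of this statement to compare against: Theorem \ref{Guralnick} is Guralnick's theorem, quoted verbatim from \cite{Gural} and explicitly flagged as a consequence of the Classification of Finite Simple Groups; the author only uses it as a black box to derive Corollary \ref{simple-sol-p}. So the only question is whether your reconstruction stands on its own. In outline it is the standard and essentially correct strategy: the observation that $[T:M]=p^a$ forces $T=PM$ for a Sylow $p$-subgroup $P$ (hence $P\cap M\in\mathrm{Syl}_p(M)$ and $P$ is transitive on cosets) is right, the Sylvester--Erd\H{o}s fact that $\binom{n}{k}$ is a prime power only for $k\in\{1,n-1\}$ is right, the sporadic groups are indeed handled by inspection, and primitive prime divisors do isolate the point/hyperplane parabolics of $\PSL_n(q)$. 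Nothing you assert would fail outright.

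The gap is one of substance rather than direction: the theorem's entire content \emph{is} the case analysis you defer. Two places deserve to be named concretely. First, the reduction to maximal subgroups plus ``short bookkeeping'' understates the final step: the theorem identifies $R$ itself (e.g.\ $R=A_{n-1}$, or the \emph{full} point/hyperplane stabiliser), so after locating the maximal overgroup $M$ you must prove that $M$ has no proper subgroup of the relevant $p$-power index --- for $A_{p^a-1}$, and for the parabolic subgroup of $\PSL_n(q)$ with $p$ coprime to $q$, this is a genuine verification, not an afterthought. Second, the blanket claims that primitive and imprimitive subgroups of $A_n$, and non-parabolic maximal subgroups of groups of Lie type, never have prime-power index ``outside finitely many small cases'' are precisely the CFSG-dependent core; appealing to Liebeck-type order bounds is one legitimate way to organise that analysis (not the only one, and not obviously Guralnick's), but until those cases are actually run the argument is a plan, not a proof. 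For the purposes of this paper the right move is the author's: cite \cite{Gural} rather than reprove it; if you do want to include an argument, the two items above are where the work has to go.
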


We deduce from this a list of candidates for the simple groups $T_J$ and primes $p$ in Theorem \ref{no-CFSG}.

\begin{corollary}  \label{simple-sol-p}
If $T$ is a non-abelian finite simple group with a soluble subgroup
of prime-power index $p^a$, then the triple $(T,p,a)$ is one of the following: 
\begin{itemize}
	\item[(i)] $(\PSL_3(2),7,1)$; 
\item[(ii)] $(\PSL_3(3), 13, 1)$;
\item[(iii)] $(\PSL_2(2^s), p,1)$ where $p=2^s+1$ is a Fermat prime with $p \geq 5$;
\item[(iv)] $(\PSL_2(8),3,2)$;
\item[(v)] $(\PSL_2(q),2,a)$ where $q=2^a-1$ is a Mersenne prime with $q \geq 7$. 
\end{itemize}
\end{corollary}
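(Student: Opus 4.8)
The plan is to go through Guralnick's list in Theorem~\ref{Guralnick} case by case, imposing the extra condition that the subgroup $R$ is \emph{soluble}, and to extract the prime $p$ in each surviving case. So first I would take a non-abelian simple group $T$ with a soluble subgroup $R$ of prime-power index $p^a$; since $R$ is in particular proper, Theorem~\ref{Guralnick} applies, and I must examine cases (i)--(v).

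For case (i), $T=A_n$ with $R=A_{n-1}$, $n=p^a$: here $A_{n-1}$ is soluble only for $n-1\le 4$, i.e.\ $n\in\{2,3,4,5\}$ (noting $A_2,A_3,A_4$ are soluble, $A_1$ trivial, but we need $T=A_n$ simple so $n\ge 5$, giving $n=5$, $R=A_4$, index $5=5^1$); but $A_5\cong\PSL_2(4)\cong\PSL_2(5)$, so this folds into case~(iii) with the Fermat prime $5=2^2+1$. For case (iv), the Mathieu groups $M_{22}$ and $M_{10}$ are not soluble (for instance $M_{22}$ contains $A_7$, and $M_{10}$ has $A_6$ as a subgroup of index~$2$), so case~(iv) is eliminated. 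For case~(v), $\PSU_4(2)\cong\PSp_4(3)$ has a subgroup of index $27=3^3$; one checks from the known subgroup structure that the index-$27$ subgroups are not soluble (they are isomorphic to $2^4{:}A_5$ or similar, containing $A_5$), so this case is also eliminated. Case~(iii), $T=\PSL_2(11)$, $R=A_5$: since $A_5$ is insoluble, this is eliminated too.

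The substantive case is (ii): $T=\PSL_n(q)$ with $p^a=(q^n-1)/(q-1)$ and $R$ the stabiliser of a point or hyperplane in the natural action on $\Proj^{n-1}(\F_q)$. Here I would observe that such a stabiliser is a maximal parabolic $P$, which fits into an extension with unipotent radical a $q$-group and Levi quotient essentially $\GL_{n-1}(q)$ (modulo scalars), so $P$ is soluble precisely when $\GL_{n-1}(q)$ is soluble, i.e.\ when $n-1\le 2$ and $(n-1,q)$ avoids the non-soluble small cases, \emph{or} $n-1=1$. Thus either $n=2$ (so $R$ is a Borel subgroup, always soluble, of index $q+1$), or $n=3$ and $\PSL_2(q)$ must be soluble, which happens only for $q\in\{2,3\}$ (since $\PSL_2(q)$ is simple for $q\ge 4$ except that $\GL_2(q)$ itself is soluble only for $q\le 3$); $q=2$ gives $\PSL_3(2)$ with $p^a=7$, and $q=3$ gives $\PSL_3(3)$ with $p^a=13$, accounting for (i) and (ii) of the Corollary. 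When $n=2$: $p^a=q+1$ where $q=\ell^f$ is a prime power. If $\ell$ is odd then $q+1$ is even, so $p=2$ and $q=2^a-1$ is a Mersenne prime (requiring $q\ge 7$ for simplicity of $\PSL_2(q)$, with $q=3$ absorbed into $\PSL_2(3)$ being soluble and $q=5$ into the Fermat case via $\PSL_2(5)\cong A_5$), giving~(v); if $\ell=2$ then $q=2^f$ and $p^a=2^f+1$ must be a prime power that is one more than a power of two, forcing $a=1$ and $p=2^f+1$ a Fermat prime $\ge 5$ (with $f\ge 2$, since $q=2$ gives $\PSL_2(2)\cong S_3$ insoluble-free, i.e.\ not simple), which is~(iii); the remaining possibility $q+1=3^2=9$, i.e.\ $q=8$, gives $\PSL_2(8)$ with $p^a=3^2$, which is~(iv). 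I would double-check that $3^2$ is the only prime-power value of $q+1$ with $q$ a prime power and $q+1$ not itself prime, using Catalan/Mih\u{a}ilescu or a direct elementary argument on $2^f+1=p^a$.

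The main obstacle I anticipate is not any single deep fact but rather the bookkeeping of overlapping isomorphisms among small groups ($A_5\cong\PSL_2(4)\cong\PSL_2(5)$, $\PSL_2(7)\cong\PSL_3(2)$, $\PSU_4(2)\cong\PSp_4(3)$, and the various coincidences in low degree) together with carefully delimiting which small values of $q$ make $\PSL_2(q)$ or $\GL_2(q)$ soluble versus simple, so that the final list is stated without redundancy and without omission. Verifying solubility (or insolubility) of the relevant subgroups $R$ in cases (iv) and (v) of Guralnick requires invoking the known maximal-subgroup structure of the Mathieu groups and of $\PSp_4(3)$, which I would cite rather than reprove.
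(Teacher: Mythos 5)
Your proposal is correct and follows essentially the same route as the paper: run through Guralnick's list, discard the cases where $R$ is insoluble, reduce case (ii) to $n=2$ or $(n,q)\in\{(3,2),(3,3)\}$ via the structure of the point/hyperplane stabiliser, and then solve $q+1=p^a$; the paper settles the two exponential equations ($2^s=p^a-1$ and $\ell^s=2^a-1$) by short elementary factoring and congruence arguments rather than citing Mih\u{a}ilescu. The one point to tighten is your final ``double-check'': as literally phrased it is contradicted by Mersenne primes $q$ (for which $q+1=2^a$ is a non-prime prime power), so it should be stated separately for the two branches, namely $2^f+1=p^a$ with $a\ge 2$ forces $p^a=9$, and $\ell^f+1=2^a$ with $\ell$ odd forces $f=1$ --- the latter being exactly what justifies your unproved assertion in the odd-$\ell$ branch that $q$ is genuinely a Mersenne prime in case (v).
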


\begin{proof}
We check the cases of Theorem \ref{Guralnick}. The only instance of
Theorem \ref{Guralnick}(i) with $R$ soluble is $n=5$, and the exceptional isomorphism $A_5 \cong \PSL_2(4)$ means that this is included in case (iii) of the Corollary. In cases (iii) and
(iv) of Theorem \ref{Guralnick}, $R$ is insoluble. The same holds in
case (v) since  $R$ has a quotient
isomorphic to $A_5$ (see the proof of \cite[(3.9)]{Gural}).

This leaves only case (ii) of Theorem \ref{Guralnick}. We first consider $n \geq 3$. Then the stabiliser of a line or a hyperplane contains $\PSL_{n-1}(q)$ as a subquotient, and this is a non-abelian simple group unless $n=3$ and $q=2$ or $3$. If $n=3$ and $q=2$, 
we have $T=\PSL_3(2)$ with a subgroup of index $7$, giving (i) of the Corollary. If $n=3$ and $q=3$, we have $T=\PSL_3(3)$ with a subgroup of index $13$, giving (ii).

Now let $n=2$, so $T=\PSL_2(q)$ where $q = \ell^s \geq 4$ is a
power of some prime $\ell$, and $q+1=p^a$ is also a prime power. Either $\ell=2$ or
$p=2$. 

If $\ell=2$, we have $q=2^s=p^a-1$ for an odd prime $p$ and some $a \geq 1$. If $a=1$ then $p$ is a Fermat prime and we have (iii). The case $p=3$ is
omitted as $\PSL_2(2) \cong S_3$ is not a simple group. If $a>1$ then $(p-1)(p^{a-1}+ \cdots + 1)=p^a-1=2^s$ so that 
$p-1=2^u$ and $p^{a-1}+\cdots + 1 =2^v$ with $1 \leq u <v$. Then $p \equiv 1 \pmod{2^u}$ and so $a \equiv p^{a-1}+ \cdots +1 \equiv 0 \pmod{2^u}$.  Thus $a$ is even, say $a=2b$.
But then $(p^b-1)(p^b+1)=2^s$. As both factors must be powers of $2$, we have $p^b=3$. This gives (iv).

Finally, if $p=2$ then $\ell^s=2^a-1$ for some $a \geq 2$. 
Thus $\ell^s \equiv -1 \pmod{4}$, so $s$ and $\ell$ are odd. If $s>1$ we have 
$$ (\ell+1)(\ell^{s-1}-\ell^{s-2}+ \cdots +1) = 2^a. $$
Then $2^a$ has an odd factor $\ell^{s-1}-\ell^{s-2}+ \cdots - \ell +1>1$, giving a contradiction. Hence $s=1$ and $q=\ell$ is prime with $q=2^a-1$. This
says that $q$ is a Mersenne prime, giving (v). We have $q \geq 7$ since $q=3$ gives $\PSL_2(3) \cong A_4$ which is not a simple group.
\end{proof}

In fact, only the simple group of order $168$ is relevant.

\begin{theorem} \label{just-168}
Let $(G,V)$ be an irreducible solution to Question \ref{main-qn}. Then, in the notation of Theorem \ref{no-CFSG}, $p=2$ and for each minimal normal subgroup $J$ of $G$, the simple group 
$T_J$ is the group $T=\PSL_2(7) \cong \GL_3(2)$ of order $168$. Moreover, one of the following holds:
\begin{itemize}
\item[(i)] $G$ contains exactly two minimal normal subgroups $J_1\cong J_2 \cong T^r$ with $r=m$, $y(J_1)=y(J_2)=z(J_1)=z(J_2)=1$;
\item[(ii)] $G$ contains a unique minimal normal subgroup $J=\soc(G) \cong T^r$ with $r=2m$, $y(J)=2$, $z(J)=1$;
\item[(iii)] $G$ contains a unique minimal normal subgroup $J=\soc(G) \cong T^r$ with $r=m$, $y(J)=z(J)=1$.
\end{itemize}
\end{theorem}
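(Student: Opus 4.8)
The approach is to run the list of candidate pairs $(T_J,p)$ produced by Corollary \ref{simple-sol-p} through the inequality (\ref{key-ineq}), using the divisibility relations (\ref{mk-ry}) to control the numerical parameters. The first point is that the prime $p$ is a single prime (the characteristic of $V$), and that for every minimal normal subgroup $J$ the pair $(T_J,p)$ must appear in Corollary \ref{simple-sol-p}. For $p\neq 2$ this determines $T_J$ up to isomorphism (no two entries of the list share a prime unless that prime is $2$), so all $T_J$ coincide with one group $T$; for $p=2$ the groups $T_J$ all lie among the $\PSL_2(q)$ with $q=2^a-1$ a Mersenne prime, $q\geq 7$. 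Writing $d_J=d(T_J)\geq 2$ and $e_J=v_p(|\Aut(T_J)|)+\tfrac{1}{p-1}$, and using $r(J)=m\,y(J)/z(J)\leq m\,y(J)$ from (\ref{mk-ry}) so that $\sum_J r(J)\leq m\sum_J y(J)$, division of (\ref{key-ineq}) by $m$ gives the clean inequality
\[ \prod_J d_J^{\,y(J)} \;<\; \sum_J y(J)\,e_J . \]

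Next I would compute $e_J$ from $|\Aut(T)|=|T|\,|\Out(T)|$ for each candidate. When $p\nmid|\Out(T_J)|$ one has $v_p(|\Aut(T_J)|)=v_p(|T_J|)$, which equals $1$ for $(\PSL_3(2),7)$, $(\PSL_3(3),13)$ and $(\PSL_2(2^a),2^a+1)$; hence $e_J<2$ for these, but $\prod_J d_J^{\,y(J)}\geq 2^{\sum_J y(J)}$ while $\sum_J y(J)e_J<2\sum_J y(J)$, so the displayed inequality fails and these candidates are excluded. There remain $(\PSL_2(8),3)$, where $v_3(|\Aut(\PSL_2(8))|)=3$ so $e_J=7/2$ but $d(\PSL_2(8))\geq 4$ in characteristic $3$ (for instance because $\PSL_2(8)$ has an element of order $9$, while a unipotent element of $\GL_d(\overline{\F_3})$ has order at most $3^{\lceil\log_3 d\rceil}$), forcing $d_J^{\,y(J)}\geq 4>e_J$ — excluded; and $(\PSL_2(q),2)$ with $q=2^a-1$, where $e_J=a+2$ while the Landazuri--Seitz--Zalesskii bound gives $d_J\geq(q-1)/2=2^{a-1}-1$, which exceeds $a+2$ once $a\geq 5$. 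A short inequality argument (isolating a factor $d_J\geq 15>e_J$ coming from such a $J$, and using $\prod\geq\sum$ for integers $\geq 2$) then shows the displayed inequality fails whenever some $a_J\geq 5$; hence $a_J=3$ for every $J$, so $p=2$ and $T_J\cong\PSL_2(7)\cong\GL_3(2)$ of order $168$, with $d_J=3$ (the natural $\GL_3(2)$-module, which also meets the Landazuri--Seitz--Zalesskii bound) and $e_J=v_2(336)+1=5$.

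With $p=2$ and $T_J\cong\GL_3(2)$ for all $J$, the displayed inequality reads $m\cdot 3^{Y}<5r$, where $Y=\sum_J y(J)$ and $r=\sum_J r(J)$; combined with $r\leq mY$ this gives $3^{Y}<5Y$, so $Y\in\{1,2\}$. If $Y=1$ there is a single minimal normal subgroup with $y(J)=1$, and $3m<5m/z(J)$ forces $z(J)=1$ and $r(J)=m$ — case (iii). If $Y=2$, either there is a single $J$ with $y(J)=2$, where $9m<10m/z(J)$ forces $z(J)=1$ and $r(J)=2m$ — case (ii); or there are two minimal normal subgroups $J_1,J_2$ with $y(J_1)=y(J_2)=1$, where $9m<5m\bigl(1/z(J_1)+1/z(J_2)\bigr)$ forces $z(J_1)=z(J_2)=1$ and hence $r(J_i)=m$, so $J_1\cong J_2\cong T^m$ — case (i). This exhausts the possibilities.

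I expect the main obstacle to be supplying sharp enough lower bounds for the modular representation degrees $d(T)$ in exactly the two places where $d(T)\geq 2$ does not suffice to contradict (\ref{key-ineq}): $\PSL_2(8)$ in characteristic $3$, and $\PSL_2(q)$ in characteristic $2$ for Mersenne primes $q$. Here one must invoke the Landazuri--Seitz--Zalesskii lower bounds on cross-characteristic representations (or, as indicated, elementary arguments bounding the order of a unipotent element of $\GL_d$ in characteristic $p$). Everything else reduces to arithmetic with $v_p(|\Aut(T)|)$ and the divisibility constraints of (\ref{mk-ry}).
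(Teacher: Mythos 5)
Your overall strategy is the same as the paper's: feed the candidate list of Corollary \ref{simple-sol-p} into (\ref{key-ineq}), using (\ref{mk-ry}) (in the weak form $r(J)\le m\,y(J)$) to reduce to $\prod_J d_J^{y(J)}<\sum_J y(J)e_J$, eliminate all candidates except the Mersenne family with $a=3$, and then rerun the sharper inequality with $z(J)$ retained to pin down the three configurations (i)--(iii). Your eliminations of cases (i)--(iii) of Corollary \ref{simple-sol-p} (via $e_J<2\le d_J$ and $2^Y\ge 2Y$) and of $\PSL_2(8)$ in characteristic $3$ are fine; indeed your value $v_3(|\Aut(\PSL_2(8))|)=3$ is the correct one (the paper asserts $|\Out(T_J)|=2$ for every group on the list and uses $v_p(|\Aut(T)|)=2$ there, which is a slip since $\Out(\PSL_2(8))$ has order $3$; the conclusion is unaffected because $d=7$, or your elementary bound $d\ge 4$ from an element of order $9$). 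Your final case analysis giving (i)--(iii), including forcing $z(J)=1$, matches the paper exactly.

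There is, however, a genuine gap at the one step where you rule out Mersenne primes $q=2^a-1$ with $a\ge 5$ once $p=2$ is known: the parenthetical argument you sketch (``isolating a factor $d_J\ge 15>e_J$, and using $\prod\ge\sum$ for integers $\ge 2$'') does not work if applied per minimal normal subgroup. Converting the right-hand sum $\sum_J y(J)e_J$ into the product $\prod_J y(J)e_J$ and comparing factorwise only yields $\prod_J d_J^{y(J)}/(y(J)e_J)<1$, and this \emph{can} hold with some $a_J\ge 5$ present: for instance two subgroups with $(a,y)=(3,1)$ and one with $(a,y)=(5,1)$ give ratios $\tfrac35,\tfrac35,\tfrac{15}{7}$ whose product is $\tfrac{27}{35}<1$, so no contradiction is reached this way, even though the original inequality ($3\cdot 3\cdot 15<5+5+7$) is blatantly violated. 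The missing idea is the paper's grouping of the minimal normal subgroups by the isomorphism type of $T_J$ \emph{before} passing from sum to product: all type-$\PSL_2(7)$ contributions are then absorbed into a single factor $3^{y_1}/(5y_1)\ge\tfrac35$, each further type contributes a factor $\ge\tfrac{15}{7}$, and the product in (\ref{p-is-2-ineq}) exceeds $1$ whenever a second type occurs (equivalently, one can split the index set into the $a=3$ part and the $a\ge5$ part and compare the two blocks multiplicatively). With that refinement inserted, your argument goes through and coincides with the paper's proof.
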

\begin{proof}
By Theorem \ref{no-CFSG}(iv), each $T_J$ has a soluble subgroup of index $p^a$ for some $a \geq 1$. The triple $(T_J,p,a)$ must therefore be one of those listed in Corollary \ref{simple-sol-p}. 
In the sequel, we will use the fact that, for $q$ prime and $s \geq 1$,
$$  |\Out(\PSL_n(q^s))| = \begin{cases} s\,\gcd(2,q^s-1) & \mbox{if } n=2, \\
		                                                   2s \, \gcd(2,q^s-1) & \mbox{if } n \geq 3. \end{cases} $$

We first consider the cases where $p>2$. 
Since $d_p(T_J)^{y(J)} \geq 2$ for each $J$, we may replace the product in (\ref{key-ineq}) by a sum. 
Using (\ref{mk-ry}), we then have
$$   \sum_J r(J) \left( \frac{z(J)}{y(J)} d_p(T_J)^{y(J)}
                         - v_p(|\Aut(T_J)|) - \frac{1}{p-1} \right)<0. $$                         
As $z(J) \geq 1$, it follows that there must be at least one $J$ for which
\begin{equation} \label{TJ-ineq}
 \frac{1}{y(J)} d_p(T_J)^{y(J)} <  v_p(|\Aut(T_J)|) + \frac{1}{p-1}. 
\end{equation}
In cases (i)--(iii) of Corollary \ref{simple-sol-p} we have $v_p(|\Out(T_J)|)=0$ and \\ 
$v_p(|\Aut(T_J)|)=v_p(|T_J|)=1$.
As $d_p(T_J) \geq 2$, (\ref{TJ-ineq}) cannot be satisfied for any integer $y(J) \geq 1$. In case (iv), $p=3$ and we have 
$v_3(|T_J|)=2$ and $v_3(|\Out(T_J)|)=1$, so that $v_3(|\Aut(T_J)|)=3$, but from \cite{online-atlas} we have $d_3(T_J)=7$ and again (\ref{TJ-ineq}) cannot hold.
Thus at least one of the $T_J$ must be as in Corollary \ref{simple-sol-p}(v). Hence $p=2$, and, for each $J$, the group $T=T_J$ has the form $T=\PSL_2(q)$ for some Mersenne prime $q=2^a-1$ with 
$a \geq 3$. Then $|T|=q(q+1)(q-1)/2$, so 
$v_2(|T|)=a$ and $v_2(|\Aut(T)|)=a+1$. From \cite[\S VIII]{Burkhardt} we have $d_2(T)=(q-1)/2 = 2^{a-1}-1$. Writing $T_J=\PSL_2(q(J))$ with $q(J) =2^{a(J)}-1$, we find from (\ref{TJ-ineq}) that
$$   \frac{(2^{a(J)-1}-1)^{y(J)}}{y(J)}  < a(J)+2.   $$
This holds only for $a(J)=3$ and $y(J)=1$ or $2$. 

We have now shown that, for at least one of the minimal normal subgroups $J$ of $G$, we have $a(J)=3$ and thus $T_J \cong \PSL_2(7) \cong \GL_3(2)$. We next show that in fact $T_J \cong \PSL_2(7)$ for all $J$. To do so, we return to (\ref{key-ineq}), which now becomes
$$ m \prod_J d_2(T_J)^{y(J)} <
        \sum_J r(J) (a(J)+2). $$
Dividing by $m$ and using (\ref{mk-ry}), we then have
\begin{equation} \label{key-ineq-bis}
   \prod_J d_2(T_J)^{y(J)} <
        \sum_J \frac{y(J) (a(J)+2)}{z(J)}. 
\end{equation}
Since each $z(J) \geq 1$, it follows that 
$$   \prod_J d_2(T_J)^{y(J)} < \sum_J y(J) (a(J)+2).  $$
To proceed further, we collect together the minimal normal subgroups $J$ with isomorphic simple components $T_J$.
Let $a_1=3, \ldots, a_h$ be the distinct values occurring among the $a(J)$. For $1 \leq f \leq h$,
let $T^{(f)} = \PSL_2(2^{a_f}-1)$ and let 
$y_f=\sum_{J: a(J)=a_f} y(J)$. Then the previous inequality becomes 
$$ \prod_{f=1}^h d_2(T^{(f)})^{y_f} < \sum_{f=1}^h y_f (a_f+2). $$
Since $y_f(a_f+2) \geq 5$ for each $f$, we may replace the sum by a product. We then have
\begin{equation} \label{p-is-2-ineq}
    \prod_{f=1}^h b_f(y_f) < 1, 
\end{equation}
where 
$$ b_f(y_f) = \frac{(2^{a_f-1}-1)^{y_f}}{y_f(a_f+2)}. $$
For $f=1$, we calculate
$b_1(1)= \frac{3}{5}$, $b_1(2)=\frac{9}{10}$ and $b_1(k) \geq \frac{9}{5}$ for $k \geq 3$. Moreover, for $f>1$ we have $a_f \geq 5$ and $b_f(k) \geq \frac{15}{7}$ for all $k \geq 1$. 
Thus (\ref{p-is-2-ineq}) can only hold if $h=1$ and $y_1=1$ or $2$. 
Hence $T_J \cong \PSL_2(7)$ for every $J$. Moreover, as $y(J) \geq 1$ for each $J$, 
there can be at most two minimal normal subgroups $J$ of $G$: either there are two minimal normal subgroups $J_1$, $J_2$ with $y(J_1)=y(J_2)=1$, or there is a unique minimal normal subgroup $J$ and $y(J) \leq 2$. 
As (\ref{key-ineq-bis}) now becomes
$$ \prod_J 3^{y(J)} < \sum_J \frac{5y(J)}{z(J)}, $$ 
we must have $z(J)=1$ in each of these cases. Using (\ref{mk-ry}) to find $r(J)$ in terms of $m$, we obtain (i)--(iii) as in the statement.
\end{proof}

\section{Conclusion of the proof of Theorem \ref{irred-class}}  \label{conclusion}

To complete the proof of Theorem \ref{irred-class}, we analyse the three cases listed in Theorem \ref{just-168}.
We will show that cases (i) and (ii) do not give any solutions to Question \ref{main-qn}, and that any solution given by case (iii) is one of those constructed 
in Theorem \ref{irred-constr}. We keep the previous notation, so $(G,V)$ is an irreducible solution to Question \ref{main-qn}, and $V=\F_p^n$.

From \cite{online-atlas}, we know that the absolutely irreducible representations in characteristic $2$ of the group $T \cong \PSL_2(7) \cong \GL_3(2)$ have degrees
$1$, $3$, $3$ and $8$, and that all these representations are realised over $\F_2$. Thus, in the notation of \S\ref{clifford}, we may take $E=\F_2$ and 
$W=V$. Then, again writing $S=\soc(G)$, (\ref{U-sum}) becomes
\begin{equation} \label{V-sum}
  V = \bigoplus_{i=1}^m g_i U 
\end{equation}
with $g_1=1$, where $U$ is an irreducible $\F_2[S]$-module. 

\begin{lemma}  \label{elim-tensors}
Cases (i) and (ii) of Theorem \ref{just-168} do not give any irreducible solutions to Question \ref{main-qn}.
\end{lemma}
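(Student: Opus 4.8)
The plan is to dispose of cases (i) and (ii) of Theorem~\ref{just-168} by exploiting the very restrictive structure they impose on $S=\soc(G)$ together with the fact that $G$ embeds in $\Aut(S)$ and acts transitively on $V$. In both cases $S\cong T^r$ with $T\cong\PSL_2(7)\cong\GL_3(2)$, and the irreducible $\F_2[S]$-module $U$ is a tensor product of $r$ factors, $y$ of which are the $3$-dimensional module for the corresponding copy of $T$ and the rest trivial. So $\dim_{\F_2}U = 3^{y}$, and $\dim_{\F_2}V = m\cdot 3^{y}$, while the transitivity of $G$ on $V$ forces $3r = v_2(|V|)\le \dim_{\F_2}V$, i.e.\ the equality $my(J)=r(J)z(J)$ with $z=1$ must actually be an equality of dimensions: $\dim_{\F_2}V = 3r$ exactly, not merely $\ge$. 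This pins everything down tightly.

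First I would treat case (ii): a unique minimal normal subgroup $J=S\cong T^{2m}$, with $y=2$, $z=1$, $r=2m$. Here $\dim_{\F_2}V = m\cdot 3^{2}=9m$, but $v_2(|V|)$ must be $\le v_2(|G|)$, and $v_2(|G|)\le v_2(|\Aut(S)|)$. Now $|\Aut(T)|=336=2^4\cdot 3\cdot 7$, so $v_2(|\Aut(T)^{2m}|)=8m$, and $\Aut(S)/\Aut(T)^{2m}$ embeds in $S_{2m}$, contributing $v_2(|S_{2m}|) = 2m - s_2(2m)$ (with $s_2$ the binary digit sum), giving $v_2(|\Aut(S)|)\le 10m - s_2(2m) \le 10m-1$. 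But we need $\dim_{\F_2}V = 9m \le v_2(|G|)\le 10m-1$, which is consistent — so the crude count is not enough. The real obstruction is more delicate: in case (ii) the two ``active'' tensor factors in $U$ come from two of the copies $T_1,T_2$ of $T$ inside $S$, and $G$ must permute the $2m$ copies transitively (as $J$ is a single minimal normal subgroup) while simultaneously the stabiliser of $0_V$ in $G$ has index exactly $2^{\dim V}=2^{9m}$ and is soluble. I would argue that the subgroup $G_0\le G$ fixing $0_V$ contains $S'=\soc\text{-analogue}$ — more precisely, by Lemma~\ref{admis-sub}-type reasoning, the action of $S$ on $V$ has point stabilisers $S\cap G_0$, and since $S$ itself is insoluble whereas $G_0$ is soluble, $S$ cannot fix any point, so $S$ acts on $V$ with all orbits of size divisible by\ldots\ actually I would compare $|S| = |T|^{2m}=(2^3\cdot 3\cdot 7)^{2m}$ against the orbit sizes: every $S$-orbit on $V$ has size a power of $2$ (Hypothesis~\ref{p-sol-stab} applied to $S$ via Lemma~\ref{comp-factors}), so $|S:\,\text{stab}|$ is a $2$-power, forcing $3^{2m}\cdot 7^{2m}\mid |\text{stab}_S(v)|$ for every $v$; in particular the $3$- and $7$-parts of $S$ lie in every point stabiliser, hence act trivially on $V$ since $\bigcap_v\text{stab}_S(v)=1$ as $S$ acts faithfully — contradiction, because the $3$- and $7$-elements of $T$ act nontrivially on its $3$-dimensional module, which appears in $U$. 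This same argument, suitably run, also kills case (i) — there $S\cong T^r\times$(nothing), $r=m$, two minimal normal subgroups $J_1\cong J_2\cong T^m$ each with $y=1$, and $U$ has exactly one active $T$-factor from $J_1$ and one from $J_2$; but then the copies of $T$ in $J_1$ other than the single active one, and likewise in $J_2$, act trivially on $U$, hence (being permuted transitively within the minimal normal subgroup $J_1$ by $G$, and $V=\bigoplus g_iU$) one checks they act trivially on all of $V$, contradicting faithfulness unless $m=1$; and if $m=1$ then $S=J_1\times J_2\cong T\times T$ acting on $V=U$ of dimension $3$, which is absurd since $\dim_{\F_2}U\ge 3^{y(J_1)}\cdot 3^{y(J_2)}=9$ by the tensor decomposition~(\ref{U-tensor}) — or more simply, $V$ must be a faithful $\F_2[T\times T]$-module, so $\dim_{\F_2}V\ge 3+3>3$.

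So the key steps, in order, are: (1) specialise $\dim_{\F_2}V = m\dim_{\F_2}U$ and the tensor formula $\dim_{\F_2}U = 3^{\sum y}$ to each case, extracting the exact value of $\dim_{\F_2}V$ in terms of $m$; (2) observe that Hypothesis~\ref{p-sol-stab} passes to $S$ (Lemma~\ref{comp-factors}(i)), so every $S$-orbit on $V$ is a $2$-power, whence the Hall $2'$-subgroup structure of $S$ — all its $3$- and $7$-elements — lies in the kernel of the $S$-action, contradicting that the active $T$-factors in $U$ carry a faithful $3$-dimensional representation on which such elements act nontrivially; (3) conclude both cases are impossible. The main obstacle I anticipate is step~(2): making precise the claim that ``the inactive copies of $T$ act trivially on all of $V$, not just on $U$,'' which requires using that $V=\bigoplus_i g_iU$ and that conjugation by the $g_i$ permutes the $T$-factors, so an element of $S$ acting trivially on $U$ and lying in the kernel on every conjugate $g_iU$ must be central in $S$ — but $S$ has trivial centre (Lemma~\ref{irred-soc}). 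I would phrase this via Proposition~\ref{mk-ry-prop}: $z(J)=1$ says $T_1$ acts nontrivially on exactly one summand $g_iU$, so the $T_k$ that act trivially on $U$ might still act nontrivially elsewhere — hence the cleanest route is not the ``inactive factors'' argument but the orbit-size argument of step~(2) applied directly: since $S$ acts faithfully with all orbits $2$-powers, $O_{2'}(S)=1$ would be needed, yet $O_{2'}(T)\ne 1$\ldots\ no, $T$ is simple so $O_{2'}(T)=1$; rather, the point is that a soluble point-stabiliser $S\cap G_0$ of $2$-power index in the \emph{simple-times} group $S$ cannot exist when $|S|$ is divisible by $3$ and $7$ to high powers unless $S$ itself is soluble — precisely Guralnick's theorem forces $T$ to be one of a short list, already used, but $T^r$ for $r\ge 2$ has \emph{no} soluble subgroup of $2$-power index (a soluble subgroup of $2$-power index in $T^r$ would project onto a soluble subgroup of $2$-power index in each factor, forcing the index to be $7^r$ times a power of $2$ — not a $2$-power for $r\ge1$ only if\ldots). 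That is the real content, and I would develop it carefully: in cases (i) and (ii) we have $r=m$ or $r=2m$ with $r\ge 2$ whenever $m\ge 2$ (case (ii) has $r\ge2$ always), and $S\cong T^r$ has no soluble subgroup of index a power of $2$, because such a subgroup $R$ satisfies $R\cap T_k$ soluble of $2$-power index in $T_k\cong T$, impossible as the smallest index of a soluble subgroup of $T=\GL_3(2)$ is $7$ (odd). Hence $S\le G$ cannot have the soluble point stabiliser $S\cap G_0$ of $2$-power index demanded by Hypothesis~\ref{p-sol-stab} — the one remaining loophole being $r=1$, i.e.\ case (i) with $m=1$, which is then dispatched by the dimension count $\dim_{\F_2}V\ge 9 > 3$ above. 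This finishes the lemma.
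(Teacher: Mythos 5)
Your argument ultimately rests on the claim that $S\cong T^r$ with $T=\GL_3(2)$ has no soluble subgroup of $2$-power index, ``because the smallest index of a soluble subgroup of $T$ is $7$ (odd)''. That claim is false, and it is fatal to the proof. $T=\GL_3(2)$ contains the Frobenius group of order $21$ (the normaliser of a Sylow $7$-subgroup), which is soluble of index $8=2^3$; consequently $T^r$ contains the soluble subgroup $(7{:}3)^r$ of index $8^r$. This is precisely the subgroup underlying the transitive action of $T$ on $\F_2^3$ that the whole paper is built on (Lemma \ref{unique-168}), and in the genuine solutions of Theorem \ref{irred-constr} the socle $T^r$ has exactly such a soluble point stabiliser of $2$-power index. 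So if your claim were correct it would exclude case (iii) as well and contradict the existence of those examples. The fact that the \emph{minimal} index of a soluble subgroup is $7$ does not preclude a soluble subgroup of index $8$; ruling out cases (i) and (ii) therefore cannot be done by a bare index count on $S$. Your earlier fallback, the orbit-size argument (``$3^{2m}7^{2m}$ divides every point stabiliser of $S$, hence the $3$- and $7$-parts act trivially''), also fails: different points have stabilisers containing different Sylow $3$- and $7$-subgroups, so divisibility of the stabiliser orders gives no common subgroup in the kernel — as you yourself half-noticed before abandoning it. A further unaddressed point is that the two nontrivial tensor factors of $U$ need not be $3$-dimensional a priori (the nontrivial $2$-modular irreducibles of $T$ have dimensions $3,3,8$), so $\dim U$ could be $24$ or $64$; this must be excluded before you may write $\dim V=9m$.

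The paper's actual route is quite different and worth noting. It first forces $\dim U=9$ from $m\dim U=\dim V\le v_2(|G|)<10m$, so $U=U_1\otimes_{\F_2}U_{m+1}$ with both factors $3$-dimensional. It then checks that $U$ (and each $g_iU$) is fixed-point free for the two simple factors acting on it, which via Proposition \ref{dir-sum} lets one regard $T_1\times T_{m+1}$ as a subgroup of $\Aff(U)$. By Lemma \ref{comp-factors}(i) the stabiliser of $0_V$ in $T_1\times T_{m+1}$ is soluble of $2$-power index, hence equals $T_1'\times T_{m+1}'$ with both factors of order $21$ — the exact configuration your claim asserts cannot occur. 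Since $T_1'$ contains a conjugate of the matrix $A$ with $A-I$ invertible, Proposition \ref{tensor-product} forces all of $T_{m+1}$ to fix $0_V$, contradicting the solubility of the point stabiliser. In other words, the contradiction comes from the tensor-product structure of the affine action, not from any nonexistence of soluble $2$-power-index subgroups in $T^r$; your proposal is missing this idea and the step it relies on would fail.
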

\begin{proof}
In case (i) we have $S=J_1 \times J_2$. Noting that $r=m$, we write $J_1=T_1 \times \cdots \times T_m$ and $J_2 = T_{m+1} \times \cdots \times T_{2m}$, where each $T_k \cong T$. Using Proposition \ref{irred-tensor} to decompose $U$ as in the proof of Lemma \ref{key-ineq-lemma}, we have $U=U_{J_1} \otimes_{\F_2} U_{J_2}$, where $U_{J_h}$ is an irreducible $\F_2[J_h]$-module for $h=1$, $2$. Taking first $h=1$, we can further decompose $U_{J_1}$ as
$$	 U_{J_1} = U_{J_1,1} \otimes \cdots \otimes U_{J_1,m} $$
where $U_{J_1,k}$ is an irreducible $\F_p[T_k]$-module for $1\leq k \leq m$. Recall that, for each $i$, there are $y(J_1)$ indices $k$ such that the linear action of $T_k$ on $g_i U$ is not trivial. Since $y(J_1)=1$, we may choose the numbering of the factors $T_k$ in $J_1$ so that only $T_i$ acts non-trivially on $g_i U$. In particular, taking $i=1$ and recalling that $U=g_1 U$, it follows that $U_{J_1,1}$ is a non-trivial irreducible $\F_2[T_1]$-module, and, for $2 \leq j \leq m$, that $U_{J_1,j}=\F_2$ (the irreducible $\F_2[T_j]$-module with trivial action). Hence $\dim U_{J_1}=\dim U_{J_1,1}=3$ or $8$. Similarly, for $h=2$ we decompose $U_{J_2}$ as 
$$	  U_{J_2} = U_{J_2,1} \otimes \cdots \otimes U_{J_2,m}  $$
where $U_{J_2,k}$ is an irreducible $\F_2[T_{m+k}]$-module for $1 \leq k \leq m$. 
We choose the numbering of the factors in $J_2$ so that $T_{m+i}$ acts non-trivially on $g_i U$ for each $i$. Then $\dim U_{J_2}=\dim U_{J_2,1}=3$ or $8$. We therefore have 
$\dim U=9$ or $24$ or $64$.

In case (ii) we have $S=J=T_1 \times \cdots \times T_{2m}$ and, applying Proposition 
\ref{irred-tensor} again, we have
$$	  U = U_1 \otimes \cdots \otimes U_{2m}   $$
where exactly two of the groups $T_k$ act non-trivially on each summand $g_i U$ of $V$.
We choose the numbering of the factors in $J$ so that again these are $T_i$ and $T_{m+i}$. As before $\dim U_j=1$ if $j \neq 1$, $m+1$, and $\dim U=9$ or $24$ or $64$.

For the rest of the proof, we treat (i) and (ii) simultaneously. Since 
$S \cong T^{2m}$ and $G$ embeds in $\Aut(S) \cong \Aut(T)^{2m} \rtimes S_{2m}$ by Theorem \ref{no-CFSG}(iii), we have
$$ m \dim U = \dim V \leq v_2(|G|)  \leq 2m v_2(|\Aut(T)|) +  v_2 \bigl( (2m)! \bigr) < 10m, $$
so $\dim U =9$. Hence $U=U_1 \otimes_{\F_2} U_{m+1}$ where $U_1$ and $U_{m+1}$ are $3$-dimensional irreducible modules over $\F_2[T_1]$ and $\F_2[T_{m+1}]$ respectively. In particular, $T_1 = \Aut(U_1)$ and $T_{m+1}=\Aut(U_{m+1})$.

We claim that $U$ is a fixed-point free $\F_2[T_1 \times T_{m+1}]$-module in the sense of Proposition \ref{dir-sum}.
Take bases $e_1$, $e_2$, $e_3$ for $U_1$ and $f_1$, $f_2$, $f_3$ for $U_{m+1}$. If $x = \sum_{j,s} x_{js} e_j \otimes f_s$ is a fixed point, then applying the element of $T_1$ which permutes $e_1$, $e_2$, $e_3$ cyclically, we see that $\sum_{j,s} x_{js} e_{j+1} \otimes f_s =  \sum_{j,s} x_{js} e_j \otimes f_s$, so $x_{js}$ is independent of $j$ for each $s$. Similarly, $x_{js}$ is independent of $s$, so $x=x_{11} \sum_{j,s} e_j \otimes f_s$. However, this element must also be fixed by the element of $T_1$ with $e_1 \mapsto e_1+e_2$, $e_2 \mapsto e_2$, $e_3 \mapsto e_3$, so that $x_{11}=0$. Thus $x=0$, proving the claim.
Similarly, $g_i U$ is a fixed-point free $\F_2[T_i \times T_{i+m}]$-module for each $i$.

Applying Proposition \ref{dir-sum} repeatedly, we may view $T_1 \times T_{m+1}$ as a subgroup of $\Aff(U)$. As $T_1 \times T_{m+1}$ is subnormal in $G$, it follows from Lemma \ref{comp-factors}(i) that the stabiliser of $0_U=0_V$ in $T_1 \times T_{m+1}$ is a soluble subgroup of $2$-power index. Any such subgroup has the form $T'_1 \times T'_{m+1}$, where $T'_1$, $T'_{m+1}$ are subgroups of order $21$ in $T_1$, $T_{m+1}$. In particular, $T'_1$ contains an element conjugate to the matrix $A$ in (\ref{AB}), and, since $A-I$ is invertible, we may apply Proposition \ref{tensor-product} to conclude that $T_{m+1} \cdot 0_V=\{0_V\}$. This is a contradiction since $T_{m+1}$ is insoluble.
\end{proof}

\begin{lemma} 
If $(G,V)$ is an irreducible solution of Question \ref{main-qn} satisfying (iii) of Theorem \ref{just-168} then, up to conjugation by $\Aut(V)$, $G$ is as constructed in Theorem \ref{irred-constr}. Moreover, up to conjugation by $\Aut(V)$, $G$ is uniquely determined by the integer $r$ and the subgroup $H$ of $S_r$ in Theorem \ref{irred-constr}.
\end{lemma}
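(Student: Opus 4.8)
The plan is to run Clifford theory in the concrete case $T \cong \PSL_2(7) \cong \GL_3(2)$, then identify $G$ explicitly inside $\Aff(V)$, and finally remove an apparent twist in this embedding.

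First I would use Theorem \ref{just-168}(iii) together with the characteristic‑$2$ representation theory of $T$ recalled above: $S := \soc(G) = T_1 \times \cdots \times T_r$ is the unique minimal normal subgroup of $G$, $V = \bigoplus_{i=1}^{m} g_i U$ with $g_1 = 1$ and $U$ an irreducible $\F_2[S]$-module, and $r = m$, $y(J) = z(J) = 1$. Since $y(J) = 1$, exactly one of the $T_k$, say $T_1$, acts non-trivially on $U$, so $U$ is a non-trivial irreducible $\F_2[T_1]$-module and $\dim_{\F_2} U \in \{3,8\}$. A dimension count rules out $8$: by Corollary \ref{centraliser}, $G \hookrightarrow \Aut(S) = \Aut(T) \wr S_r$, so $v_2(|G|) \le 4r + v_2(r!) < 5r$, whereas transitivity of $G$ on $V$ gives $\dim_{\F_2} V \le v_2(|G|)$, and $\dim_{\F_2} V = m\dim_{\F_2} U = r\dim_{\F_2} U$. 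Hence $\dim_{\F_2} U = 3$ and $T_1 = \GL(U)$. Relabelling so that $T_i = g_i T_1 g_i^{-1}$, the summand $V_i := g_i U$ is the homogeneous $\F_2[S]$-component of $V$ on which $T_i$ acts as the full $\GL(V_i) = \GL_3(2)$ while every other $T_j$ acts trivially; thus $V = V_1 \oplus \cdots \oplus V_r$ with $\dim_{\F_2} V = 3r$, and $G$ permutes the (canonical) summands $V_i$.

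Next I would put $S$ into standard position. Each $V_i$ is a fixed-point-free $\F_2[T_i]$-module (the natural module of $\GL_3(2)$), so repeated application of Proposition \ref{dir-sum} to $S = T_1 \times \cdots \times T_r$ gives $T_i \le \Aff(V_i) \le \Aff(V)$ for each $i$. As $T_i$ is subnormal in $G$, Lemma \ref{comp-factors}(i) shows that the stabiliser of $0_{V_i}$ in $T_i$ (which is the stabiliser of $0_V$ in $T_i$) is a soluble subgroup of $2$-power index in $\GL_3(2)$, forcing this index to be $8$ and the stabiliser to have order $21$; so $T_i$ is transitive on $V_i$. By Lemma \ref{unique-168}, $(T_i, V_i)$ is one of the eight standard transitive copies of $T$, and these are all $\Aut(V_i)$-conjugate; applying one element of $\prod_i \Aut(V_i) \le \Aut(V)$ I may assume each $T_i$ is in standard position, so that $S = J$, the block-diagonal group of (\ref{M-matrix}). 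Now $G$ normalises $J$. I would then show that $K := \{ g \in G : g T_i g^{-1} = T_i \text{ for all } i \}$ equals $J$: such a $g$ fixes each homogeneous component $V_i$, so the linear part $\rho(g)|_{V_i}$ lies in $\GL(V_i) = \rho(T_i)$ and conjugation by $g$ induces an inner automorphism on each $T_i$, hence on $S$; since $C_G(S) = 1$ by Corollary \ref{centraliser}, $g \in S = J$. Thus $\bar H := G/J$ embeds in $S_r$, and it is transitive because $\soc(G)$ is a single $G$-orbit of the $T_i$. Because $G$ has no translations, $\rho$ is injective on $G$; $\rho(G)$ is a subgroup of $\GL_3(2) \wr S_r = N_{\GL(V)}(\GL_3(2)^r)$ containing the base group $\rho(J) = \GL_3(2)^r$, hence $\rho(G) = \GL_3(2)^r \rtimes \bar H$ with $\bar H$ acting via the block permutation matrices $P_0(\sigma)$. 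Lifting, for each $\sigma \in \bar H$ there is a unique $g_\sigma = (t_\sigma, P_0(\sigma)) \in G$, and $\sigma \mapsto g_\sigma$ is a homomorphism splitting $G \to \bar H$, so $G = J \rtimes \{ g_\sigma : \sigma \in \bar H \}$.

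The decisive step — and the one I expect to be the main obstacle — is to show that every $t_\sigma$ vanishes: a priori $G$ could be a non-trivially twisted extension of $J$ by $\bar H$ inside $\Aff(V)$, and the point is that the complement $\{g_\sigma\}$ must normalise the \emph{concrete} group $J$ (whose translation parts are governed by $\psi$), not merely $\rho(J)$. Conjugating $\widehat{M} \in J$ with linear part $M = \mathrm{diag}(M_1, \dots, M_r) \in \GL_3(2)^r$ by $g_\sigma$, one computes that $g_\sigma \widehat{M} g_\sigma^{-1}$ has linear part $M^\sigma$ (the permutation of the diagonal blocks by $\sigma$) and translation part $t_\sigma + \psi^{(r)}(M^\sigma) + M^\sigma t_\sigma$, where $\psi^{(r)}$ applies $\psi$ block-wise; since $g_\sigma \widehat{M} g_\sigma^{-1} \in J$, this translation part must equal $\psi^{(r)}(M^\sigma)$, so $M^\sigma t_\sigma = t_\sigma$ (we are in characteristic $2$). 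As $M$ ranges over $\GL_3(2)^r$, so does $M^\sigma$, and a vector fixed by every block-diagonal matrix $\mathrm{diag}(N_1, \dots, N_r)$ is $0$; hence $t_\sigma = 0$. Therefore $g_\sigma = P(\sigma)$ and $G = J \rtimes P(\bar H)$, which is exactly the group constructed in Theorem \ref{irred-constr} from $r$ and $H = \bar H$. Moreover $\bar H$ is soluble: $G' := \mathrm{Stab}_G(0_V)$ is soluble because $(G,V)$ solves Question \ref{main-qn}, $G' \cap J = T'_1 \times \cdots \times T'_r$ has order $21^r$, and the order count $|G'| = |G|/2^{3r} = 21^r|\bar H|$ shows $\bar H \cong G'/(G' \cap J)$, a quotient of $G'$. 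Finally, for the uniqueness clause, the above shows any case-(iii) solution is $\Aut(V)$-conjugate to $J \rtimes P(\bar H)$ for a transitive soluble $\bar H \le S_r$ with $r = (\dim_{\F_2} V)/3$; and if $\bar H_2 = \pi \bar H_1 \pi^{-1}$ in $S_r$ then conjugation by $P(\pi) \in \Aut(V)$ fixes $J$ (it permutes the standard blocks, carrying $T_i$ to $T_{\pi(i)}$) and carries $P(\bar H_1)$ to $P(\bar H_2)$, so these are $\Aut(V)$-conjugate. Hence $G$ is determined up to conjugation by $\Aut(V)$ by $r$ and $H$, as required.
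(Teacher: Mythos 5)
Your argument is correct, and up to the point where $S=\soc(G)$ is put into standard position so that $S=J$ (the Clifford decomposition with $m=r$, $y(J)=z(J)=1$, the $2$-adic dimension count forcing $\dim_{\F_2}U=3$, repeated use of Proposition \ref{dir-sum}, Lemma \ref{comp-factors}(i) to get transitivity of each $T_i$ on $V_i$, and Lemma \ref{unique-168} to standardise) it coincides with the paper's proof. Where you genuinely diverge is the endgame. The paper proves a separate result, Proposition \ref{J-norm}, computing $\{X\in\Aff(V): X\cdot 0_V=0_V \mbox{ and } XJX^{-1}=J\}=J'\rtimes S_r$ by an explicit block-matrix argument (resting on the fact that $\GL_3(2)$ spans the full algebra of $3\times 3$ matrices over $\F_2$), and then concludes from $G=JG'$ that $G'=J'\rtimes H$ and $G=J\rtimes H$. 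You avoid Proposition \ref{J-norm} entirely: you first pin down the linear image $\rho(G)$ inside $N_{\GL(V)}\bigl(\GL_3(2)^r\bigr)=\GL_3(2)\wr S_r$, using that the $V_i$ are the pairwise non-isomorphic isotypic components of $V$ as an $S$-module and that $C_G(S)=1$ (Corollary \ref{centraliser}); then, since $\Tr(G)$ is trivial, $\rho$ is injective on $G$, so the block permutation matrices lift to a complement $\{g_\sigma\}$ in $G$; finally, the requirement that this complement normalise the concrete group $J$ (whose translation parts are given block-wise by $\psi$), combined with the fixed-point-freeness of the natural module, forces each translation part $t_\sigma$ to vanish -- your conjugation computation here is correct in characteristic $2$. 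This buys a shorter, more structural finish that reuses facts already established, at the cost of invoking the (standard, and easily justified by your isotypic-component remark) identification of the normaliser of the block-diagonal group in $\GL(V)$, whereas the paper's Proposition \ref{J-norm} is a self-contained affine computation that determines the full stabiliser-normaliser rather than only the part meeting $G$. Your treatment of the solubility and transitivity of $H$ and of the uniqueness clause matches the paper's.
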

\begin{proof}
In this case we have $r=m$ and $S=J=T_1 \times \cdots \times T_m$. Since $y(J)=1$, we may assume that the $T_i$ are numbered so that $T_k$ acts non-trivially on $g_i U$ if and only if $k=i$. Arguing as in the proof of Lemma \ref{elim-tensors}, we have $\dim U =3$ or $8$. Moreover,
$$ m \dim U \leq v_2(|G|)  \leq m v_2 (|\Aut(T)|) + v_2(m!) < 5m, $$
so $\dim U=3$. By Lemma \ref{comp-factors}(i), the stabiliser $J'$ of $0_V$ in $J$ is a soluble subgroup of $2$-power index, so $J'=T'_1 \times \cdots \times T'_r$ where each $T'_i$  has index $8$ in $T_i$. It follows that the orbit of $0_V$ under $J$ has cardinality
$8^r=|V|$, so that $J$ acts transitively on $V$.

Since any irreducible $\F_2[T_i]$-module of dimension $3$ is fixed-point free, it follows from Proposition \ref{dir-sum} that 
$$    G \leq \Aff(g_1 U) \times \cdots \times \Aff(g_r U). $$
Conjugating $G$ by an element of $\Aut(V)$, we may assume the standard basis of $V$ is obtained by 
concatenating bases of $g_1 U$, \ldots, $g_r U$. Thus $J$ consists of block matrices in $\GL_{3r+1}(2)$ of the form
\begin{equation} 
 \left( \begin{array}{cccc|c} M_1 & 0 & \cdots & 0 & v_1 \\ 
                                                   0 & M_2 & \cdots & 0 &  v_2 \\ 
                                                   \vdots & \vdots &   & \vdots & \vdots \\ 
                                                  0 & 0 & \cdots & M_r & v_r \\ \hline
                                                   0 & 0 & \cdots & 0 & 1
                             \end{array} \right),
\end{equation}                
with $v_i \in g_i U$. Moreover, we may suppose that the basis of each $g_i U$ is chosen so that the stabiliser $T'_i$ of $0_V$ in $T_i$ is generated 
by the matrices in $T_i$ with $M_i=A$ and $M_i=B$, where $A$ and $B$ are as in (\ref{AB}). Then, by Lemma \ref{unique-168}, $v_i=\psi(M_i)$, so the elements of $J$ are as in (\ref{M-matrix}).

We embed the symmetric group $S_r$ into $\Aut(V)$ by taking each $\pi \in S_r$ to the block
permutation matrix $P(\pi)$ as in the proof of Theorem \ref{irred-constr} in  \S\ref{example-sec}. Then $S_r$ normalises $J$, and we can form the semidirect product $J' \rtimes S_r$ inside $\Aff(V)$. 

Since $J$ acts transitively on $V$, we have $JG'=G$ and $G/J \cong G'/J'$, where $G'$ is the stabiliser of $0_V$ in $G$. As $G' \cdot 0_V=\{0_V\}$ and $G'$ normalises $J$, it follows from Proposition \ref{J-norm} below that $G' \leq J' \rtimes S_r$. Thus $G'=J' \rtimes H$ for some subgroup $H$ of $S_r$. This subgroup is uniquely determined by $G$ once we have fixed the basis of $V$ and therefore fixed the embedding of $S_r$ into $\Aut(V)$. Then $G=JG'=J \rtimes H$. Also, $H$ must be soluble since $G'$ is soluble. As $G$ acts transitively on $\{T_1, \ldots, T_r\}$, but $J$ acts trivially on this set, it follows that $H$ is transitive as a subgroup of $S_r$. Hence $H$ satisfies the conditions in Theorem \ref{irred-constr}, and $G$ is exactly the subgroup $T^r \rtimes H$ of $\Aff(V)$ constructed from $H$ there. In particular, the group $G$ in the statement of Theorem \ref{irred-constr} is uniquely determined, up to conjugation by $\Aut(V)$, by the given subgroup $H$ of $S_r$.
\end{proof}

\begin{proposition} \label{J-norm}
With the above notation, 
$$  \{X \in \Aff(V) : X \cdot 0_V =0_V \mbox{ and } XJX^{-1} = J\}= J' \rtimes S_r.  $$ 
\end{proposition}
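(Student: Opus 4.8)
The plan is to establish the two inclusions separately.

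First I would prove the easy direction ``$\supseteq$''. The set on the right is a subgroup of $\Aff(V)$, being the intersection of the stabiliser of $0_V$ with the normaliser of $J$, so it is enough to check that it contains $J'$ and each block-permutation matrix $P(\pi)$, $\pi \in S_r$. An element of $J'$ fixes $0_V$ by the very definition of $J'$ as the stabiliser of $0_V$ in $J$, and it normalises $J$ because it lies in $J$. Each $P(\pi)$ is a linear map, hence fixes $0_V$, and it normalises $J$ by the conjugation formula displayed in the proof of Theorem \ref{irred-constr}. That same formula shows $P(\pi)$ carries $T_i'$ onto $T_{\pi(i)}'$, so $P(S_r)$ normalises $J'$ and the subgroup generated by $J'$ and $P(S_r)$ is exactly $J' \rtimes S_r$.

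For ``$\subseteq$'', take $X$ in the left-hand side. Since $X \cdot 0_V = 0_V$, the translation part of $X$ is trivial, so $X$ acts linearly on $V$. As $X$ normalises $J = T_1 \times \cdots \times T_r$, conjugation by $X$ permutes the minimal normal subgroups $T_1, \dots, T_r$, say $X T_i X^{-1} = T_{\pi(i)}$ with $\pi \in S_r$. The key structural observation is that the summand $g_i U$ is intrinsic to $T_i$: since $T_i$ acts trivially on $\bigoplus_{k \neq i} g_k U$ and non-trivially and irreducibly on $g_i U$, we have $[T_i, V] = g_i U$, whence $X(g_i U) = [T_{\pi(i)}, V] = g_{\pi(i)} U$. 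Because $P(\pi)$ also sends $g_i U$ to $g_{\pi(i)} U$, the element $Y := P(\pi)^{-1} X$ is linear, normalises $J$, and preserves every $g_i U$; therefore $Y$ is block diagonal, $Y = \mathrm{diag}(Y_1, \dots, Y_r)$ with $Y_i \in \GL(g_i U)$, and rerunning the $[T_i, V]$ argument shows that conjugation by $Y$ fixes each $T_i$ setwise. It then remains to prove that $Y_i \in T_i'$ for each $i$, i.e. $Y \in J'$. Since $Y$ preserves $\bigoplus_{k \neq i} g_k U$, on which $T_i$ acts trivially, conjugation by $Y$ restricts on the $i$th block to conjugation by $Y_i$ inside $\Aff(g_i U)$, and this normalises the subgroup $T_i \cong \GL_3(2)$ of $\Aff(g_i U)$. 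So I would compute $N_{\Aff(g_i U)}(T_i)$: letting $\rho\colon \Aff(g_i U) \to \GL(g_i U)$ be the linear-part map, its restriction to $N := N_{\Aff(g_i U)}(T_i)$ is onto (since already $\rho(T_i) = \GL(g_i U)$), so $N \cap \Tr(\Aff(g_i U)) = N \cap \ker \rho$ is a $\GL_3(2)$-submodule of the irreducible module $g_i U$, hence $0$ or $g_i U$; it cannot be $g_i U$, for then $T_i$ and $\Tr(\Aff(g_i U))$ would be normal subgroups of $\Aff(g_i U)$ with trivial intersection and product $\Aff(g_i U)$, hence commuting, which forces $T_i$ to act trivially on $g_i U$ --- absurd. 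Thus $N \cap \Tr(\Aff(g_i U)) = 0$ and $|N| = |\GL_3(2)| = |T_i|$, so $N = T_i$. Consequently $Y_i \in T_i$, and as $Y_i$ fixes $0$ we get $Y_i \in T_i' = \langle A, B \rangle$ (equivalently $\psi(Y_i) = 0_V$, by Lemma \ref{unique-168}). Therefore $Y \in J'$ and $X = P(\pi) Y \in J' \rtimes S_r$.

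The main obstacle is the normaliser computation $N_{\Aff(g_i U)}(T_i) = T_i$, which rests on the irreducibility of the natural $\GL_3(2)$-module; everything else is bookkeeping with the block decomposition. A further point that needs care is the passage from ``$Y$ normalises $T_i$ inside $\Aff(V)$'' to ``$Y_i$ normalises $T_i$ inside $\Aff(g_i U)$'', which is exactly why it was worth first establishing that $Y$ preserves each summand $g_i U$.
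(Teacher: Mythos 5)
Your proof is correct, but it takes a genuinely different route from the paper's at both key steps, even though it arrives at the same factorisation $X=P(\pi)Y$ with $Y$ block diagonal and lying in $J'$. To obtain the block-permutation structure, the paper proves a spanning lemma (the invertible block-diagonal matrices span the full algebra of block-diagonal matrices over $\F_2$) and then computes explicitly with the blocks $P_{ij}$, $Q_{jk}$ of $X$ and $X^{-1}$ to force a block-monomial shape; you instead note that conjugation by $X$ permutes the simple factors $T_i$ (the minimal normal subgroups of $J$) and transport this to the summands via the identification $g_iU=[T_i,V]$, which is basis-free and avoids the matrix manipulation entirely. To pin down the diagonal blocks, the paper specialises its conjugation relations to elements of $J'$ and invokes the self-normalisation of $T'$ in $T$ (as the normaliser of a Sylow $7$-subgroup), whereas you show each $Y_i$ normalises the transitive copy $\widehat{T}\cong\GL_3(2)$ inside $\Aff(g_iU)$ and prove that this copy is self-normalising there; your complement-versus-translations argument for this is correct, and in fact the same conclusion can be read off from Lemma \ref{unique-168}, since its proof shows all $8$ transitive copies of $T$ in $\Aff(\F_2^3)$ are conjugate, so the normaliser of one of them has order $1344/8=168$. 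Linearity of $Y_i$ then gives $Y_i\in T'$, as you say. What each approach buys: yours is more conceptual and would generalise to any block group whose simple blocks act nontrivially, irreducibly and fixed-point-freely on their summands with a self-normalising image; the paper's is more elementary, using only matrix algebra over $\F_2$ and a standard Sylow fact inside $T$ rather than a normaliser computation inside the affine group. (One cosmetic slip: in the easy inclusion you refer to the intersection of the point stabiliser with the normaliser of $J$ as ``the set on the right''; it is the left-hand side, but your logic---a subgroup containing $J'$ and all $P(\pi)$ contains $J'\rtimes S_r$---is exactly what is needed.)
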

\begin{proof}
We first claim that the linear span of the invertible block diagonal matrices in $\GL_{3r}(2)$ is the full algebra of block diagonal matrices. Let $E^{(1)}$, $E^{(2)}$, $E^{(3)}$ be three (invertible) matrices in $\GL_3(2)$ whose sum is the zero matrix. One possible choice is 
$$ E^{(1)}= \left( \begin{array}{ccc} 1 & 1 & 0 \\0 & 1 & 1 \\ 1 & 0 & 0 \end{array} \right), \quad
 E^{(2)}= \left( \begin{array}{ccc} 0 & 1 & 0 \\0 & 0 & 1 \\ 1 & 0 & 1 \end{array} \right), \quad
 E^{(3)}= \left( \begin{array}{ccc} 1 &  0& 0 \\0 & 1 & 0 \\ 0 & 0 & 1 \end{array} \right). $$
Given $1 \leq i \leq r$ and $M \in \GL_3(2)$, let $K^{(1)}$, $K^{(2)}$, $K^{(3)}$ be the invertible block diagonal matrices in $\GL_{3r}(2)$ such that $K^{(j)}$ has $M$ in the $i$th block and $E^{(j)}$ in each of the other blocks.
Then $K^{(1)}+K^{(2)}+K^{(3)}$ has $3M=M$ in the $i$th block and $0$ in all other blocks. It therefore suffices to observe that the full ring of $3 \times 3$ matrices over $\F_2$ is spanned by its unit group 
$T=\GL_3(2)$. Indeed, it is easy to see that the $6$ elementary matrices and the $6$ permutation matrices in $T$ span the matrix ring. This proves the claim.
  
Clearly $J' \rtimes S_r$ fixes $0_V$ and normalises $J$. Conversely, suppose that $X \cdot 0_V=0_V$ and $XJX^{-1}=J$. Write $X$ as a block matrix
$$ X = \left( \begin{array}{cccc|c} P_{11} & P_{12} & \cdots & P_{1r} & 0 \\ 
                                                   P_{21} & P_{22} & \cdots & P_{2r} &  0 \\ 
                                                   \vdots & \vdots &   & \vdots & \vdots \\ 
                                                   P_{r1} & P_{r2} & \cdots & P_{rr} & 0 \\ \hline
                                                   0 & 0 & \cdots & 0 & 1
                             \end{array} \right). $$
Similarly, let $X^{-1}$ be written as a block matrix with the matrix $Q_{ij}$ in row $i$ and column $j$.
Thus we have the equalities of $3\times 3$ matrices 
$$ \sum_{j=1}^r P_{ij} Q_{jk} = \delta_{ik} I = \sum_{j=1}^r Q_{ij} P_{jk}
   \mbox{ for } 1 \leq i,k \leq r. $$                             
Now let $Y \in J$ have diagonal blocks $M_1$, \ldots $M_r$ as in (\ref{M-matrix}). 
As $XYX^{-1} \in J$, we have 
 $$ XYX^{-1} = \left( \begin{array}{cccc|c} N_1 & 0 & \cdots & 0 & \psi(N_1) \\ 
                                                   0 & N_2 & \cdots & 0 &  \psi(N_2) \\ 
                                                   \vdots & \vdots &   & \vdots & \vdots \\
                                                   0 & 0 & \cdots & N_r & \psi(N_r) \\ \hline
                                                   0 & 0 & \cdots & 0 & 1
                             \end{array} \right) $$                          
for some $N_1$, \ldots, $N_r \in \GL_3(2)$. It follows that 
\begin{equation} \label{conj1}
   \sum_{j=1}^r P_{ij}M_j Q_{jk} = N_i \delta_{ik} 
\end{equation}
and
\begin{equation} \label{conj2}
   \sum_{j=1}^r P_{ij} \psi(M_j) = \psi(N_i). 
\end{equation}
Let us write $\overline{X}$ for the projection of $X$ into $\Aut(V)=\GL_{3r}(2)$. 
Then (\ref{conj1}) shows that conjugating an invertible block diagonal matrix in $\GL_{3r}(2)$ by $\overline{X}$ gives another invertible block diagonal matrix. It then follows from the claim at the start of the proof that conjugating any block diagonal matrix (not necessarily invertible) by $\overline{X}$ gives a block diagonal matrix. Thus, given any $3 \times 3$ matrices $M_1$, \ldots, $M_r$ over $\F_2$, there exist matrices $N_1$, \ldots, $N_r$ so that (\ref{conj1}) holds.

Let $i \in \{1, 2, \ldots, r\}$. Since $X$ is invertible, there must be some $h$ with $P_{ih} \neq 0$.  Consider the block diagonal matrix $M$ with an arbitrary matrix $M_h$ in the $h$-th position, and all other blocks $0$. Then the $ij$-th block of $PMQ$ is
$P_{ih} M_h Q_{hj}$. This must be $0$ if $j \neq i$. Since this holds for arbitrary $M_h$, it follows that $Q_{hj}=0$ for all $j \neq i$. Since $\sum_j Q_{hj} P_{jh}=I$, we find that $Q_{hi}P_{ih}=I$, so $Q_{hi}$ and $P_{ih}$ are invertible. As $Q_{hi} \neq 0$, a similar argument shows that $P_{ij}=0$ for all $j \neq h$. Thus for each $i$, there is a unique $h$ with $P_{ih} \neq 0$. Similarly, for each $h$, there is a unique $i$ with $P_{hi} \neq 0$. Thus there is a permutation $\pi \in S_r$ such that $P_{ij} \neq 0$ if and only if $i= \pi(j)$. Setting $Z_j=P_{\pi(j)j} \in \GL_3(2)$, we find that 
$$ P_{ij} = \begin{cases} Z_j & \mbox{ if } i = \pi(j), \\
                                        0 & \mbox{ otherwise.}  \end{cases} $$
Thus 
$$ Q_{jk} = \begin{cases} Z_j^{-1} & \mbox{ if } k = \pi(j), \\
                                        0 & \mbox{ otherwise.}  \end{cases} $$
Then (\ref{conj1}) and (\ref{conj2}) simplify to 
$$ Z_j M_j Z_j^{-1} = N_{\pi(j)}, \qquad Z_j \psi(M_j) =\psi(N_{\pi(j}). $$
If we now take $M$ to be in $J'$, so $M_j \in T'$ and $\psi(M_j)=0$ for all $j$, 
then we have $\psi(N_i)=0$ and $N_i \in T'$ for all $i$. Hence each $Z_j$ lies in the normaliser of $T'$ in $T$. Since $T'$ is itself the normaliser of a Sylow $7$-subgroup of $T$, this gives $Z_j \in T'$. The block diagonal matrix $Z$ with blocks $Z_j$ then lies in $J'$, so $X= Z^{-1} P(\pi) \in J' \rtimes S_r$.
\end{proof}

\bibliography{sol-insol-refs}
\end{document}